\newcommand{\Ac}{\mathcal{A}}
\newcommand{\Hc}{\mathcal{H}}
\newcommand{\Ic}{\mathcal{I}}
\newcommand{\Lc}{\mathcal{L}}
\newcommand{\Mc}{\mathcal{M}}
\newcommand{\Nc}{\mathcal{N}}
\newcommand{\Pc}{\mathcal{P}}
\newcommand{\Tc}{\mathcal{T}}
\newcommand{\Uc}{\mathcal{U}}
\newcommand{\Wc}{\mathcal{W}}
\newcommand{\Xc}{\mathcal{X}}
\newcommand{\Rbb}{\mathbb{R}}
\newcommand{\Ebb}{\mathbb{E}}
\newcommand{\Nbb}{\mathbb{N}}
\newcommand{\Pbb}{\mathbb{P}}
\newcommand{\level}{\mathrm{level}}
\newcommand{\depth}{\mathrm{depth}}
\newcommand{\rank}{\mathrm{rank}}
\newcommand{\linearspan}{\mathrm{span}}
\newcommand{\range}{\mathrm{Im}}
\newcommand{\id}{{id}}
\newtheorem{theorem}{Theorem}[section]
\newtheorem{remark}[theorem]{Remark}
\newtheorem{example}[theorem]{Example}
\newtheorem{lemma}[theorem]{Lemma}\newtheorem{proposition}[theorem]{Proposition}
\newtheorem{corollary}[theorem]{Corollary}
\title{Higher-order principal component analysis for the approximation of tensors in tree-based low rank formats}
\author{Anthony Nouy\thanks{Centrale Nantes, 
LMJL, UMR CNRS 6629, 
1 rue de la No\"e, BP 92101, 44321 Nantes Cedex 3, France. email: anthony.nouy@ec-nantes.fr} \thanks{This research was supported by the French National Research Agency (grant ANR CHORUS MONU-0005).}}
\date{}							% Activate to display a given date or no date
\begin{document}
\maketitle

\begin{abstract}
This paper is concerned with the approximation of tensors using tree-based tensor formats, which are tensor networks whose graphs are dimension partition trees. We consider Hilbert tensor spaces of multivariate functions defined on a product set equipped with a  probability measure. This includes the case of multidimensional arrays corresponding to finite product sets. 
We propose and analyse an algorithm for the construction of an approximation using only point evaluations of a multivariate function, or evaluations of some entries of a multidimensional array. 
The algorithm is a variant of higher-order singular value decomposition which  constructs a hierarchy of subspaces associated with the different nodes of the tree and a corresponding hierarchy of interpolation operators. Optimal subspaces are estimated using empirical principal component analysis of interpolations of 
partial random evaluations of the function. 
The algorithm is able to provide an approximation in any tree-based format with either a prescribed rank or a prescribed relative error, with a number of evaluations of the order of the storage complexity of the approximation format. Under some assumptions on the estimation of principal components, we prove that the algorithm provides either a quasi-optimal approximation with a given rank, or an approximation satisfying the prescribed relative error, up to constants depending on the tree and the properties of interpolation operators. The  analysis takes into account the discretization errors for the approximation of infinite-dimensional tensors. 
For a tensor with finite and known rank in a tree-based format, the algorithm is able to recover the tensor in a stable way using a number of evaluations equal to the storage complexity of the representation of the tensor in this format. 
Several numerical examples illustrate the main results and the behavior of the algorithm for the approximation of high-dimensional functions using  hierarchical Tucker or tensor train tensor formats, and the approximation of univariate functions using {tensorization}. 
\end{abstract}

\noindent\textbf{Keywords:} high-dimensional approximation, tree-based tensor formats, deep tensor networks, higher-order singular value decomposition, higher-order principal component analysis, interpolation.\\[3pt]
\noindent\textbf{2010 AMS Subject Classifications:} 15A69, 41A05, 41A63, 65D15, 65J99
%  	15A69   	Multilinear algebra, tensor products
% 	41A05   	Interpolation
%	41A63   	Multidimensional problems 
%  	65D05   	Interpolation
%     65D15   	Algorithms for functional approximation
% 	65J99 	Numerical analysis in abstract spaces, None of the above, but in this section
%\\[3pt]

\section{Introduction}

 The approximation of high-dimensional functions is one of the most challenging tasks in computational science. 
Such high-dimensional problems arise in many domains of physics, chemistry, biology or finance, where
 the functions are the  solutions of high-dimensional partial differential equations (PDEs). 
 Such problems also typically arise in statistics or machine learning, for the estimation of high-dimensional probability density functions, 
 or the approximation of the relation between a certain random variable and some predictive variables, the typical task of supervised learning. 
The approximation of high-dimensional functions is also required in optimization or uncertainty quantification problems, where the functions represent the response of a system (or model) in terms of some parameters. 
%(design variables or random variables  reflecting uncertainties on the system (or model) under consideration. 
These problems require many evaluations of the functions and are usually intractable when one evaluation requires a specific experimental set-up or one run of a complex numerical code.  
\\

%Naive (grid-based) approximation methods yield an exponential increase in storage and computational complexity, which is the so-called curse of dimensionality. 
The approximation of high-dimensional functions from a limited number of information on the functions requires exploiting  low-dimensional structures of functions. This usually call for nonlinear approximation tools \cite{DeVore98nonlinearapproximation,TEM03}.
A prominent approach consists of exploiting the sparsity of functions relatively to a basis, a frame, or a more general dictionary of functions \cite{TEM11,BUN04,cohen2015approximation}.  Another approach consists of exploiting low-rank structures of multivariate functions, interpreted as elements of tensor spaces, which is related to notions of sparsity in (uncountably infinite) dictionaries of separable functions. 
For a multivariate function $v(x_1,\hdots,x_d)$ defined on a product set $\Xc_1\times \hdots \times \Xc_d$, which is here identified with a tensor of order $d$, a natural notion of rank is the \emph{canonical rank}, which 
is the minimal integer  $r$ such that  
$$
v(x_1,\hdots,x_d) = \sum_{k=1}^r v^1_k(x_1) \hdots v_k^d(x_d)
$$
for some univariate functions $v^\nu_k$ defined on $\Xc_\nu$. For $d=2$, this corresponds to the unique notion of rank, which coincides with the  matrix rank when the variables take values in finite index sets and $v$ is identified with a matrix. A function with low canonical rank $r$ has a number of parameters which scales only linearly with $r$ and $d$. However, it turns out that this format has several drawbacks when $d>2$ (see, e.g., \cite{DES08,hillar2013most}), which makes it unsuitable for approximation. Then, other notions of rank have been introduced. For a subset of dimensions $\alpha$ in $\{1,\hdots,d\}$, the $\alpha$-rank of a function $v$ is the minimal integer  
$ \rank_\alpha(v)$ such that 
 $$
v(x_1,\hdots,x_d) = \sum_{k=1}^{\rank_\alpha(v)} v^\alpha_k(x_\alpha)  v_k^{\alpha^c}(x_{\alpha^c})
$$
for some functions $v^\alpha_k$ and $v^{\alpha^c}_k$ of complementary  groups of variables $x_\alpha = (x_\nu)_{\nu\in \alpha}\in \Xc_\alpha $ and $x_{\alpha^c} = (x_\nu)_{\nu\in \alpha^c} \in \Xc_{\alpha^c}$, with $\alpha^c$ the complementary  subset of $\alpha$ in 
 $\{1,\hdots,d\}.$
Approximation formats can then be defined by imposing $\alpha$-ranks for a collection of subsets $\alpha$. 
More precisely, if $A$ is a collection of subsets in $\{1,\hdots,d\}$, we define an approximation format 
$$
\Tc_r^A  = \{v : \rank_\alpha(v) \le r_\alpha, \alpha \in A\} = \bigcap_{\alpha\in A} \Tc_{r_\alpha}^{\{\alpha\}},
$$
where $r=(r_\alpha)_{\alpha\in A}$ is a tuple of integers. 
When $A$ is a tree-structured collection of subsets (a subset of a dimension partition tree), $\Tc_r^A $ is a tree-based  tensor format whose elements admit a hierarchical and data-sparse representation. Tree-based tensor formats are tree tensor networks, i.e. tensor networks with tree-structured graphs \cite{Orus2014117}. 
They include the  hierarchical Tucker (HT) format \cite{hackbusch2009newscheme} and the tensor-train (TT) format \cite{oseledets2009breaking}. 
Tree-based formats have many favorable properties that make them favorable for numerical use. As an intersection of subsets of tensors with bounded $\alpha$-rank, $\alpha \in A$, these formats inherit most of the nice properties of the low-rank approximation format for order-two tensors. 
In particular, under suitable assumptions on tensor norms, 
best approximation problems in the set $\Tc_r^A$ are well-posed  \cite{Falco:2012uq,falco2015geometric}. Also, the $\alpha$-rank of a tensor  can be computed through singular value decomposition, and the notion of singular value decomposition can be extended (in different ways) to these formats \cite{DEL00,grasedyck2010,OSE11}. Another interesting property, which is not exploited in the present paper, is the fact that the set $\Tc_r^A$ is a  differentiable manifold \cite{Holtz:2012fk,uschmajew2013geometry,falco2015geometric,falco2016dirac}, which has interesting consequences in optimization or model order reduction of dynamical systems in tensor spaces \cite{Lubich:2013}. There are only a few results available on the approximation properties of tree-based formats \cite{Schneider201456}. However, it has been observed in many domains of applications  that tree-based formats have a high approximation power (or expressive power).  
Hierarchical tensor formats have been recently identified with deep neural networks with a particular architecture \cite{cohen2015expressive}.
%, while canonical tensor format has been identified with a shallow network .

 The reader is referred to the monograph \cite{hackbusch2012book} and surveys \cite{KOL09,Khoromskij:2012fk,Grasedyck:2013,nouy:2016_handbook,nouy:2017_morbook,bachmayr2016tensor} for an introduction to tensor numerical methods  and an overview of recent developments in the field. 
 \\

This paper is concerned with the problem of computing an approximation of a function $u(x_1,\hdots,x_d)$ using 
  point evaluations of this function, where evaluations can be selected adaptively. This includes problems  where the function represents the output of a black-box numerical code, a system or a physical experiment for a given value of the input variables $(x_1,\hdots,x_d)$. This also includes the solution of high-dimensional PDEs with a probabilistic interpretation, where Monte-Carlo methods can be used to obtain point evaluations of their solutions. This excludes problems where evaluations of the functions come as an unstructured data set. A multivariate function $u(x_1,\hdots,x_d) $ is here considered as an element of a Hilbert tensor space $\Hc_1\otimes \hdots \otimes \Hc_d$ of real-valued functions defined on a product set $ \Xc_1\times \hdots \times \Xc_d$ equipped with a probability measure. This  includes the case of multidimensional arrays when the variables $x_\nu$ take values in finite sets $\Xc_\nu$. In this case, a point evaluation corresponds to the evaluation of an entry of the tensor. 
  
Several algorithms have been proposed for the construction of approximations in tree-based formats using point evaluations of functions or entries of tensors. Let us mention algorithms that use adaptive and structured evaluations of tensors  \cite{OSE10,Ballani2013639} and statistical learning approaches  that use unstructured (random) evaluations of functions  \cite{Espig:2009yf,Doostan:2013,Chevreuil2015,chevreuil2017:learning}. Let us also mention the recent work \cite{luu2017new} for the approximation in Tucker format, with an approach similar to the one proposed in the present paper.   

In the present paper, we propose and analyse a new algorithm which is based on a particular extension of the 
 singular value decomposition for the tree-based format $\Tc_r^A$ which allows us to construct an approximation using only evaluations of a function (or entries of a tensor). The proposed algorithm constructs a hierarchy of subspaces $U_\alpha$  of functions of groups of variables $x_\alpha$, for all $\alpha\in A$, and associated interpolation operators $I_{U_\alpha}$ which are oblique projections onto $U_\alpha$.
For the construction of $U_\alpha$ for a particular node $\alpha \in A$, we interpret the function $u$ as a random variable $u(\cdot,x_{\alpha^c})$ depending on a set of random variables $x_{\alpha^c}$ with values in the space of functions of the variables $x_\alpha$. Then $U_\alpha$ is obtained by estimating the principal components of this function-valued random variable using random samples $u(\cdot,x^k_{\alpha^c})$. In practice, we estimate the principal components from interpolations 
$I_{{V_\alpha}} u(\cdot,x^k_{\alpha^c})$ of these samples on a subspace ${V_\alpha}$ which is a certain approximation space when $\alpha$ is a leaf of the tree, or the tensor product of subspaces $\{U_\beta\}_{\beta \in S(\alpha)}$ associated with the sons $S(\alpha)$ of the node $\alpha$ when $\alpha$ is not a leaf of the tree. This construction  only requires 
 evaluations of $u$ on a product set of points which is the product of an interpolation grid in $\Xc_{\alpha}$ (unisolvent for the space ${V_\alpha}$), and a random set of points in $\Xc_{\alpha^c}$. It is a sequential construction going from the leaves to the root of the tree. 
 
The proposed algorithm can be interpreted as an extension of principal component analysis for tree-based tensors which provides  
  a statistical estimation of low-dimensional subspaces of functions of groups of variables for the representation of a multivariate function. 
  It is able to provide an approximation $u^\star$ in any tree-based format $\Tc_r^A$ with either a prescribed rank $r$ or a prescribed relative error (by adapting the rank $r$). For a given $r$, it has the remarkable property that it is able to provide an approximation in $\Tc_r^A$ with a number of evaluations equal to the storage complexity of the resulting approximation.
Under some assumptions on the estimation of principal components, we prove that the algorithm, up to some discretization error $\rho$,  provides with high probability  a quasi-optimal approximation with a prescribed rank, i.e.
$$
\Vert u - u^\star \Vert \le c \min_{v\in \Tc^A_r} \Vert u-v \Vert + \rho,
$$
where the constant $c$ depends on the set $A$ and the properties of orthogonal projections and interpolation operators associated with  principal subspaces. Also, under some assumptions on the estimation of principal components and discretization error, we prove that the algorithm with prescribed tolerance $\epsilon$ is able to provide an approximation $u^\star$ such that 
$$
\Vert u - u^\star \Vert \le \tilde c \epsilon \Vert u \Vert
$$
holds with high probability,  where  the constant $\tilde c$  depends on the set $A$ and the properties of projections and interpolation operators. Sharp inequalities are obtained by considering the properties of projection and interpolation operators when restricted to minimal subspaces of tensors. 
The analysis takes into account the discretization errors for the approximation of infinite-dimensional tensors. 
For a tensor with finite and known rank in a tree-based format, and when there is no discretization error,  the algorithm is able to recover the tensor in a stable way using a number of evaluations equal to the storage complexity of the representation of the tensor in this format. 
 This algorithm may have important applications in the manipulation of big data, by providing a way to reconstruct a multidimensional array from a limited number of entries (tensor completion).
\\

 The outline of the paper is as follows.  
{In section \ref{sec:projections}, we introduce some definitions and properties of projections in Hilbert spaces, with a particular attention on Hilbert spaces of functions and projections based on point evaluations. In section \ref{sec:tensors}, we recall basic definitions on tensors and Hilbert tensor spaces of functions defined on measured product sets. Then we introduce some definitions and properties of operators on tensor spaces, with partial point evaluation functionals as a particular case. Finally, we introduce definitions and properties of projections on tensor spaces, with a particular attention on orthogonal projection and interpolation. In section 
\ref{sec:tree-based-formats}, we introduce tree-based low-rank formats in a general setting including classical HT and TT formats. In section \ref{sec:pca}, we first introduce the notion of principal component analysis for multivariate functions and then propose an extension of principal component analysis to tree-based tensor format. This is based on  a new variant of higher-order singular value decomposition of tensors in tree-based format. In section \ref{sec:empirical-pca}, we present and analyse a modified version of the algorithm presented in section \ref{sec:pca} which only requires point evaluations of functions, and which is based on empirical principal component analyses and interpolations.}
In section \ref{sec:examples}, the behavior of the proposed algorithm is illustrated and analysed in several numerical experiments.

\section{Projections}\label{sec:projections}

 For two vector spaces $V$ and $W$ equipped with norms $\Vert \cdot\Vert_V$ and $\Vert \cdot\Vert_W$ respectively, we denote by $L(V,W)$ the space of linear operators from $V$ to $W$. We denote by $\Lc(V,W)$ the space of linear and continuous operators from $V$ to $W$, with bounded operator norm 
$\Vert A \Vert_{V\to W} = \max_{\Vert v \Vert_V=1} \Vert A v \Vert_W$. We denote by $V^*=L(V,\Rbb)$ the algebraic dual of $V$ and by $V'=\Lc(V,\Rbb)$ the topological dual of $V$, and we let $\Vert \cdot \Vert_{V\to \Rbb} = \Vert \cdot \Vert_{V'}$. We denote by $\langle \cdot , \cdot \rangle$ the duality pairing between a space and its dual.
 We let $L(V) := L(V,V)$ and $\Lc(V) := \Lc(V,V)$, and we replace the notation $\Vert \cdot \Vert_{V\to V}$ by $\Vert \cdot \Vert_V$, where the latter notation also stands for the norm on $V$. 

\subsection{Projections}

Let $V$ be a Hilbert space and $U$ be a {finite-dimensional} subspace of $V$. 
An operator $P$ is a \emph{projection} onto a subspace $U$ if $\range(P)=U$ and $Pu=u$ for all $u\in U$. 
\\ \par 
The \emph{orthogonal projection} $P_U$ onto $U$ is a linear and continuous operator  which associates to $v \in V$ the unique solution $P_U v \in U$ of
$$
\Vert v - P_U v \Vert_V = \min_{u \in U} \Vert v-u \Vert_V,
$$
or equivalently
$
(u,P_U v - v) = 0, \; \forall u\in U.
$
The orthogonal projection $P_U$ has operator norm $\Vert P_U \Vert_V=1$.
\\\par 
Let $W$ be a {finite-dimensional subspace of $V^*$ such that 
\begin{subequations}
\begin{align} 
&\dim(W) = \dim(U), \text{ and} \label{condition_UWa}\\
& \{u \in U : \langle w , u\rangle = 0 \text{ for all } w \in W\} = \{0\},\label{condition_UWb}
\end{align}
 \end{subequations}
 where the latter condition is equivalent to $U\cap {}^\perp W = \{0\}$, with 
 ${}^\perp W $ the \emph{annihilator} of $W$ in $V$ (see \cite[Definition 1.10.4]{megginson2012introduction}).
Under the above assumptions, we have that for any $v\in V$, there exists a unique $u\in U$ such that $\langle w,u-v\rangle = 0$ for all $w\in W$.\footnote{Uniqueness comes from \eqref{condition_UWb} while existence comes from   \eqref{condition_UWa} and \eqref{condition_UWb}.} This allows to define the}    
 projection $P_{U}^W$ onto $U $ along $W $ which is the linear operator on $V$ which associates to $v \in V$ the unique solution $P_{U}^W v \in U$ of 
$$
\langle w ,P_{U}^W v- v \rangle=0,\; \forall w \in W.  
$$ 
For $W = R_V U$, where $R_V : V \to V'$ is the Riesz map, the projection $P_{U}^W$   coincides with the orthogonal  projection $P_U$. A non orthogonal projection is called an \emph{oblique projection}. 
{ 
If $W \subset V',$ then $P_U^W$ is a projection from $V$ onto $U$ parallel to $Ker(P_U^W)=Z^\perp$, where $Z = R_V^{-1}W$. 
If $W\subset \tilde U'$, with $\tilde U $ a closed subspace of $V$, then $P_U^W \vert_{\tilde U}$ is a projection from $\tilde U$ onto $U$ parallel to $Ker(P_U^W) \cap \tilde U = Z^\perp \cap \tilde U$, where $Z = R_{\tilde U}^{-1} W$, with $R_{\tilde U} $ the Riesz map from $\tilde U$ to $\tilde U'$.
}

{\begin{proposition}
Let $\tilde U$ be a closed subspace of $V$ and assume that $U\subset \tilde U$ and $W\subset \tilde U'.$\footnote{Note that $V'\subset \tilde U'$ and we may have $W \not\subset V'.$} Then $P_U^W$ is a continuous operator from $\tilde U$ to $V$.
%, with norm 
%$$
%\Vert P_U^W \Vert_{\tilde U\to V} \le \alpha^{-1},
%$$
%with 
%$$
%\alpha = \min_{0\neq u \in U} \max_{0\neq w \in W} \frac{\langle w,u\rangle}{\Vert u \Vert_V \Vert w \Vert_W}.
%$$
\end{proposition}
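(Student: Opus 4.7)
The plan is to exploit the finite-dimensionality of $U$ and $W$. Let $n = \dim(U) = \dim(W)$, and fix bases $\{u_1,\ldots,u_n\}$ of $U$ and $\{w_1,\ldots,w_n\}$ of $W$. Writing $P_U^W v = \sum_{i=1}^n c_i(v)\, u_i$, the defining conditions $\langle w_j, P_U^W v - v\rangle = 0$ for $j=1,\ldots,n$ become a square linear system $G c(v) = b(v)$ in $\Rbb^n$, where $G_{ji} = \langle w_j, u_i \rangle$ and $b_j(v) = \langle w_j, v\rangle$. This reduces the question to tracking continuity through three finite-dimensional pieces.

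Next I would verify that $G$ is invertible using condition \eqref{condition_UWb}. If $G c = 0$ for some $c \in \Rbb^n$, then $u := \sum_i c_i u_i \in U$ satisfies $\langle w_j, u\rangle = 0$ for every $j$, hence $\langle w, u\rangle = 0$ for all $w \in W$ by linearity. By \eqref{condition_UWb} this forces $u = 0$, and linear independence of $\{u_i\}$ then gives $c = 0$. Thus $c(v) = G^{-1} b(v)$, which in particular gives a closed-form for $P_U^W v$ on $\tilde U$ and confirms that the projection is well-defined on the whole subspace $\tilde U$.

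Finally I would chain continuity. For $v \in \tilde U$, the hypothesis $W \subset \tilde U'$ means each functional $w_j$ is continuous on $\tilde U$, so the map $v \mapsto b(v)$ is continuous from $\tilde U$ into $\Rbb^n$. The map $b \mapsto G^{-1} b$ is continuous on $\Rbb^n$, and $c \mapsto \sum_i c_i u_i$ is continuous from $\Rbb^n$ into $V$ since $U$ is finite-dimensional and all norms on it are equivalent. Composing yields continuity of $P_U^W : \tilde U \to V$, together with an explicit bound of the form $\Vert P_U^W v\Vert_V \le \Vert G^{-1}\Vert\,\big(\max_j \Vert w_j\Vert_{\tilde U'}\big)\big(\sum_i \Vert u_i\Vert_V\big)\, \Vert v\Vert_V$.

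There is no serious obstacle in the argument; the only conceptual point worth flagging is that since $W$ may fail to lie in $V'$, the expression $\langle w, v\rangle$ with $w\in W$ makes sense only for $v\in \tilde U$, so $P_U^W$ is genuinely defined on $\tilde U$ rather than on all of $V$. This is exactly why the continuity statement is formulated with $\tilde U$ as the domain and $V$ as the codomain.
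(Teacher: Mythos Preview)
Your proof is correct and complete. The approach differs from the paper's: you work with explicit bases of $U$ and $W$, reduce the projection to a square linear system $Gc(v)=b(v)$, verify invertibility of $G$ via \eqref{condition_UWb}, and then chain continuity through finite-dimensional maps. The paper instead uses a basis-free inf-sup argument: it sets
\[
\alpha = \min_{0\neq u\in U}\max_{0\neq w\in W}\frac{\langle w,u\rangle}{\Vert u\Vert_V\Vert w\Vert_{\tilde U'}},
\]
shows $\alpha>0$ from \eqref{condition_UWb}, and obtains directly $\Vert P_U^W v\Vert_V \le \alpha^{-1}\Vert v\Vert_V$ by using $\langle w,P_U^W v\rangle=\langle w,v\rangle$. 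Your argument is more elementary and constructive; the paper's yields a cleaner, basis-independent constant and fits the functional-analytic style used later. Your final remark about the domain being $\tilde U$ rather than $V$ is exactly the right observation.
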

\begin{proof}
Let us equip $W$ with the norm $\Vert w \Vert_W = \Vert w \Vert_{\tilde U'} = \max_{v\in \tilde U} \langle w ,v \rangle/\Vert v \Vert_V,$ such that for all $v\in \tilde U$, $\langle w , v \rangle \le \Vert w \Vert_W \Vert v\Vert_V.$ Let 
$$
\alpha = \min_{0\neq u \in U} \max_{0\neq w \in W} \frac{\langle w,u\rangle}{\Vert u \Vert_V \Vert w \Vert_W}.
$$
Assumption \eqref{condition_UWb} implies that $\alpha>0$. Then for all $v\in \tilde U$, we have 
$$
\Vert P_U^W v \Vert_V \le \alpha^{-1} \max_{0\neq w \in W} \frac{\langle w, P_U^W v\rangle}{ \Vert w \Vert_W} = \alpha^{-1} \max_{0\neq w \in W} \frac{\langle w, v\rangle}{ \Vert w \Vert_W} \le \alpha^{-1} \Vert v \Vert_V,
$$
which ends the proof.
\end{proof}
}	
%{\begin{remark}
%Note that the above definition of the oblique projection $P_U^W$ holds when $U$ is an infinite dimensional subspace.
%\end{remark}
%}

\begin{proposition}\label{prop:properties_projections}
Let $P$ and $\tilde P$ be projections onto subspaces $U$ and $\tilde U$ respectively and assume  $U \subset \tilde U$. Then
$$ \tilde P P= P.$$ Moreover,  
 if $P$ and $\tilde P$ are projections along $W$ and $\tilde W$ respectively, with 
$W\subset \tilde W$,
then  $$  \tilde P P = P \tilde P = P.$$
 \end{proposition}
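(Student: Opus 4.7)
The plan is to handle the two identities separately, exploiting the defining property "projection fixes its range" for the first one and the annihilator description of the kernel for the second.

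For the identity $\tilde P P = P$ under the single assumption $U \subset \tilde U$, I would argue pointwise. For any $v \in V$, the vector $Pv$ lies in $U$ by definition of $P$, hence in $\tilde U$ by the inclusion. Since $\tilde P$ is a projection onto $\tilde U$, it acts as the identity on $\tilde U$, so $\tilde P (P v) = P v$. This is immediate and uses nothing about $\tilde P$ beyond its definition as a projection.

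For the second identity $P \tilde P = P$ under the additional assumption $W \subset \tilde W$, the key step is to identify the kernel of $P$. By definition, $P v = 0$ is equivalent to saying that $0 \in U$ satisfies $\langle w, 0 - v\rangle = 0$ for all $w \in W$, i.e.\ $v \in {}^\perp W$ (the annihilator of $W$ in $V$). Hence $\ker(P) = {}^\perp W$. Likewise, since $\tilde P$ is the projection along $\tilde W$, the residual $\tilde P v - v$ satisfies $\langle \tilde w, \tilde P v - v\rangle = 0$ for every $\tilde w \in \tilde W$, so $\tilde P v - v \in {}^\perp \tilde W$. The inclusion $W \subset \tilde W$ gives ${}^\perp \tilde W \subset {}^\perp W = \ker(P)$, therefore $P(\tilde P v - v) = 0$, i.e.\ $P \tilde P v = P v$. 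Combined with the first part (which applies since $W \subset \tilde W$ together with the conditions \eqref{condition_UWa}--\eqref{condition_UWb} ensures $U \subset \tilde U$, or we take $U \subset \tilde U$ as an implicit hypothesis carried over), we obtain $\tilde P P = P \tilde P = P$.

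I do not foresee a real obstacle here; the only subtlety to check is that the hypothesis $U \subset \tilde U$ is still in force in the "moreover" clause, so that $\tilde P P = P$ continues to hold and only $P \tilde P = P$ needs a separate argument. The proof is essentially a two-line manipulation in each case, once the identification $\ker(P) = {}^\perp W$ is made explicit.
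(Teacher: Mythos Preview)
Your proof is correct and follows essentially the same route as the paper: the first identity is handled identically, and for $P\tilde P = P$ you phrase the argument via $\ker(P) = {}^\perp W$ and $\tilde P v - v \in {}^\perp \tilde W \subset {}^\perp W$, which is just a repackaging of the paper's direct verification that $\langle \phi, P\tilde P v - v\rangle = 0$ for all $\phi \in W$ followed by uniqueness.

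One small point: in your parenthetical, the claim that $W \subset \tilde W$ together with conditions \eqref{condition_UWa}--\eqref{condition_UWb} forces $U \subset \tilde U$ is not correct (you only get $\dim U \le \dim \tilde U$); your second alternative is the right one, since the ``moreover'' clause in the statement retains the standing hypothesis $U \subset \tilde U$.
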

\begin{proof}
For all $v\in V$, $Pv \in U \subset \tilde U$, and therefore 
$\tilde P P v=Pv$, which proves the first statement. For the second statement, by definition of the  projection $P$, we have  that $
\langle \phi,P \tilde Pv - \tilde Pv \rangle = 0
$ for all $\phi\in W$.  Since $W\subset \tilde W$ and by definition of $\tilde P$, this implies that $
\langle \phi,P \tilde Pv - v \rangle = 0
$ for all $\phi \in W$. By definition of $Pv$ and since $P \tilde Pv \in U,$ this implies $P\tilde P v = Pv = \tilde P Pv $.
\end{proof}
\begin{proposition}\label{prop:interpolation_error}
Let $U$ and $\tilde U$ be two closed subspaces of $ V$, {with $U$ of finite dimension}. Let $P_U$ be the orthogonal projection onto $U$ and let $P_U^W$ be the projection onto $U$ along $W \subset \tilde U'$. For all $v \in \tilde  U$, 
$$
\Vert P_U^W v - P_U v \Vert_V \le  \Vert P_U^W - P_U \Vert_{ \tilde U \to V} \Vert v - P_U v \Vert_V,
$$  
with $$\Vert P_U^W - P_U \Vert_{ \tilde U \to V} =  \Vert P_U^W  \Vert_{ (id-P_U)\tilde U \to V} \le\Vert P_U^W  \Vert_{ \tilde U \to V} .$$ Also, for all $v \in \tilde  U$, 
$$
\Vert v - P_U^W v \Vert^2_V \le (1+\Vert P_U^W - P_U \Vert_{ \tilde U \to V}^2) \Vert v - P_U v \Vert_V^2 .
$$
%If $U\subset \tilde U$, then $U^\star$ is the orthogonal complement of $U$ in $\tilde U$ and $\Vert I_U - P_U \Vert_{ U^\star \to V} \le \Vert I_U - P_U \Vert_{ \tilde U \to V}$.
\end{proposition}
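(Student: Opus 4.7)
The crux of the argument is the elementary observation that both $P_U^W$ and $P_U$ restrict to the identity on $U$, so the difference $P_U^W - P_U$ annihilates $U$. For any $v\in \tilde U$, writing $v = P_U v + (v - P_U v)$ and using $U\subset \tilde U$ so that $v - P_U v \in \tilde U$, I would obtain the identity $(P_U^W - P_U)v = (P_U^W - P_U)(v - P_U v)$. Taking $V$-norms and invoking the definition of the operator norm on $\tilde U$ then yields the first displayed inequality immediately.

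For the chain $\Vert P_U^W - P_U\Vert_{\tilde U \to V} = \Vert P_U^W\Vert_{(id-P_U)\tilde U \to V} \le \Vert P_U^W\Vert_{\tilde U \to V}$, the right inequality is the trivial restriction-inclusion bound since $(id-P_U)\tilde U \subset \tilde U$. For the equality, I would argue in two directions. On $(id-P_U)\tilde U$ the operator $P_U$ vanishes, so $P_U^W - P_U$ coincides with $P_U^W$ there, giving one direction. Conversely, the identity from the first paragraph shows that the norm of $P_U^W - P_U$ on all of $\tilde U$ is attained already on the subspace $(id-P_U)\tilde U$, by noting $\Vert v - P_U v\Vert_V \le \Vert v\Vert_V$ (orthogonality of $P_U$), so each $v\in \tilde U$ is effectively dominated by its component in $(id-P_U)\tilde U$.

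For the second inequality, the plan is a Pythagorean split: decompose $v - P_U^W v = (v - P_U v) + (P_U v - P_U^W v)$, where the first summand lies in $U^\perp$ (by definition of the orthogonal projection) and the second lies in $U$. Orthogonality then gives $\Vert v - P_U^W v\Vert_V^2 = \Vert v - P_U v\Vert_V^2 + \Vert P_U v - P_U^W v\Vert_V^2$, and substituting the bound from part one into the second term produces exactly the claimed factor $1 + \Vert P_U^W - P_U\Vert_{\tilde U \to V}^2$. The only mild subtlety in the whole proof is the middle norm identity, but since it reduces to the annihilation property of $P_U^W - P_U$ on $U$, no real obstacle is expected.
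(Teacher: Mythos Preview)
Your proof is correct and follows essentially the same route as the paper: both exploit that $P_U^W - P_U$ vanishes on $U$ to replace $v$ by $v-P_Uv$, and both conclude the second inequality via the Pythagorean identity $\Vert v - P_U^W v\Vert_V^2 = \Vert v - P_U v\Vert_V^2 + \Vert P_U v - P_U^W v\Vert_V^2$. One small remark: you invoke $U\subset\tilde U$ to ensure $v-P_Uv\in\tilde U$ and $(id-P_U)\tilde U\subset\tilde U$; this inclusion is not stated explicitly in the proposition, though the paper's own use of the final inequality $\Vert P_U^W\Vert_{(id-P_U)\tilde U\to V}\le\Vert P_U^W\Vert_{\tilde U\to V}$ relies on it as well, so it is best read as an implicit standing assumption.
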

\begin{proof}
For $v\in \tilde U$, $\Vert P_U^W v - P_U v \Vert_V = \Vert P_U^W(v -P_U   v)\Vert_V =\Vert (P_U^W-P_U)( v - P_U  v)\Vert_V  \le \Vert P_U^W - P_U \Vert_{(id-P_U)\tilde U  \to V} \Vert v - P_U v \Vert_V$, with $\Vert P_U^W - P_U \Vert_{(id-P_U)\tilde U  \to V}  = \Vert P_U^W - P_U \Vert_{ \tilde U  \to V}=  \Vert P_U^W  \Vert_{ (id-P_U)\tilde U   \to V} $.  %=  \Vert P_U^W  \Vert_{ (id-P_U)\tilde U   \to V}  \le \Vert  P_U^W \Vert_{ \tilde U \to V} .
This proves the first statement. The second statement directly follows from $\Vert  v - P_U^W v \Vert_V^2 = \Vert  v - P_U v\Vert_V^2 + \Vert   P_U v - P_U^W v  \Vert_V^2$.
\end{proof}

\subsection{Projection of functions using point evaluations}

Let $V$ be a Hilbert space of functions defined on a set $X$.  {For $x\in X$,  the 
point evaluation functional $\delta_{x} \in V^*$ is defined by $\langle \delta_{x },v\rangle  =  v(x)$.}

\subsubsection{Interpolation}

Let $U $ be a $n$-dimensional subspace of $V$ and let $\Gamma = \{x^k\}_{k=1}^n$ be a set of $n$ interpolation points in $X$. The set of interpolation points $\Gamma$ is assumed to be unisolvent for $U$, i.e. for any $(a_k)_{k=1}^n \in \Rbb^n$, there exists a unique $u\in U$ such that $u(x^k) = a_k$ for all $1\le k\le n$. 
The interpolation operator $I_U$ associated with $\Gamma$ is a linear operator from $V$ to $U$ such that for $v\in V$, $I_U v$ is the unique element of $U$ such that 
$$
\langle \delta_{x},I_U v - v \rangle = I_Uv(x) - v(x) = 0 \quad \forall x \in \Gamma.
$$
%$$
%I_U v(x^k) = v(x^k), \quad 1\le k\le n,
%$$
The interpolation operator $I_U$ 
 is an oblique projection $P_U^W$ onto 
$U$ along $W = \linearspan\{\delta_{x} : x\in \Gamma\}$.
{Note that the condition that $\Gamma$ is unisolvent for $U$ is equivalent to the condition $\eqref{condition_UWb}$ on $U$ and $W$, which ensures that $I_U$ is well defined}.
 From Proposition \ref{prop:properties_projections}, we deduce the 
  following property.
\begin{proposition}\label{interpolations_nestedgrids_commute}
Let $U$ and $\tilde U$ be two subspaces associated with sets of interpolation points $\Gamma$ and $\tilde \Gamma$ respectively. If $U \subset \tilde U$ and $\Gamma \subset \tilde \Gamma$, then $$
I_{U}I_{\tilde U} = I_{\tilde U} I_U = I_U.
$$ 
\end{proposition}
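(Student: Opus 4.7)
The plan is to reduce the statement immediately to the second part of Proposition \ref{prop:properties_projections}. Recall from the discussion preceding the proposition that the interpolation operator $I_U$ is exactly the oblique projection $P_U^W$ onto $U$ along $W = \linearspan\{\delta_x : x\in \Gamma\}$, and similarly $I_{\tilde U} = P_{\tilde U}^{\tilde W}$ with $\tilde W = \linearspan\{\delta_x : x \in \tilde \Gamma\}$.

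First I would observe that the two inclusion hypotheses translate directly into the two inclusions required by Proposition \ref{prop:properties_projections}: the assumption $U \subset \tilde U$ gives the inclusion of ranges, and the assumption $\Gamma \subset \tilde \Gamma$ gives $W \subset \tilde W$ (since every generator $\delta_x$ with $x \in \Gamma$ also belongs to the spanning set of $\tilde W$). Both $I_U$ and $I_{\tilde U}$ are well defined as projections by the unisolvence assumptions on $\Gamma$ and $\tilde \Gamma$, which are precisely conditions \eqref{condition_UWa}--\eqref{condition_UWb} for the pairs $(U,W)$ and $(\tilde U, \tilde W)$.

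With these two inclusions in hand, the second conclusion of Proposition \ref{prop:properties_projections} applies directly and yields $I_{\tilde U} I_U = I_U I_{\tilde U} = I_U$, which is exactly the desired identity. There is essentially no obstacle here: the proposition is a one-line corollary of the general projection fact, and the only thing to check is that the pointwise-evaluation spanning sets $W$ and $\tilde W$ inherit the inclusion $\Gamma \subset \tilde \Gamma$, which is immediate from the definition of linear span.
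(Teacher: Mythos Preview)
Your proof is correct and follows exactly the approach the paper takes: the paper states just before this proposition that it is deduced directly from Proposition~\ref{prop:properties_projections}, and your argument spells out precisely that reduction, identifying $I_U = P_U^W$, $I_{\tilde U} = P_{\tilde U}^{\tilde W}$ and verifying the inclusions $U\subset\tilde U$ and $W\subset\tilde W$.
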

\paragraph{Magic points.}\label{sec:magicpoints}
For a given basis $
 \{\varphi_i\}_{i=1}^n$ of $U$, a set of interpolation points $\Gamma = \{x^k\}_{k=1}^n$, called \emph{magic points}, can be determined with a greedy algorithm proposed in \cite[Remark 2]{MAD09}. The procedure for selecting the set $\Gamma$ in a subset $\Gamma_\star$ in $X$ is as follows. 
We first determine a point $x^1\in \Gamma_\star$ and an index $i_1$ such that 
$$\vert \varphi_{i_1}(x^1) \vert = \max_{x\in \Gamma_\star} \max_{1\le i \le n} \vert \varphi_i(x) \vert.$$
Then for $k\ge 1$, we define $\psi_i^{(k)}(x) = \varphi_i(x) -\sum_{m=1}^{k} \sum_{p=1}^{k}  \varphi_{i_m}(x) a^{(k)}_{m,p} \varphi_i(x^p)$, with the matrix $(a^{(k)}_{m,p})_{1\le m,p\le k}$ being the inverse of the matrix \\ $(\varphi_{i_m}(x^p))_{1\le p \le k,1\le m \le k}$, 
 such that $\psi_{i_m}^{(k)}(x)=0$ for all $1\le m \le k$ and $x\in X$, and $\psi_{i}^{(k)}(x^p)=0$ for all $1\le p \le k$ and $1\le i \le n$. Then, we determine the point $x^{k+1} \in \Gamma_\star$ and an index $i_{k+1}$ such that 
$$\vert \psi^{(k)}_{i_{k+1}}(x^{k+1}) \vert = \max_{x\in \Gamma_\star} \max_{1\le i \le n} \vert \psi_i^{(k)}(x) \vert.$$

% \paragraph{Interpolation of tensors.}
%Consider  a non-empty set $\alpha $ in  $D$.  Let $U_\alpha$ be a $n_\alpha$-dimensional subspace of $\Hc_\alpha$ and % with basis $\{\varphi^\alpha_k\}_{k=1}^{n_\alpha}$. 
%let $\Gamma_\alpha = \{x_\alpha^k\}_{k=1}^{n_\alpha}$ in $\Xc_\alpha$ be an associated unisolvent set of interpolation points and $W_\alpha = \linearspan\{\delta_{x_\alpha}\}_{x_\alpha\in \Gamma_\alpha}$. We denote by  $I_{U_\alpha} : \Hc_\alpha \to U_\alpha$ the interpolation operator 
% associated with the set of points $\Gamma_\alpha$, and by $\Ic_{U_\alpha} = I_{U_\alpha} \otimes id_{\alpha^c}$ the corresponding projection from $\Hc$ onto $U_\alpha \otimes \Hc_{\alpha^c}$. If $W_\alpha \subset \Hc_\alpha'$, then $I_{U_\alpha}$ and $\Ic_{U_\alpha}$ are continuous operators with equal norms 
% $
% \Vert \Ic_{U_\alpha} \Vert = \Vert I_{U_\alpha} \Vert_{\Hc_\alpha}.
% $ 

\subsubsection{Discrete least-squares projection}			
Let $U$ be a $n$-dimensional subspace of $V$ and let $\Gamma = \{x^k\}_{k=1}^m$ be a set of $m$ points in $X$, $m\ge n$, such that 
$\Vert v \Vert_\Gamma = (\sum_{x\in \Gamma} v(x)^2)^{1/2}$ defines a norm on $U$. The discrete least-squares projection $Q_U$ is the linear operator from $V$ to $U$ such that for $v\in V$, $Q_U v$ is the unique element in $U$ which minimizes  $\Vert v - u \Vert_\Gamma^2$ over all $u\in U$, or equivalently
$$
(u,v-Q_Uv )_\Gamma = \sum_{x\in \Gamma} u(x) \langle  \delta_x , v-Q_U v  \rangle = 0 \quad \forall u \in U,
$$
where $(\cdot,\cdot)_\Gamma$ is the inner product   associated with the norm $\Vert\cdot\Vert_\Gamma$ on $U$. The discrete least-squares projection $Q_U$ is an oblique projection onto $U$ along $W = \{\sum_{x\in \Gamma} u(x) \delta_x  : u \in U\}$. % = \linearspan\{\sum_{x\in \Gamma} \delta_x \varphi_i(x) : 1\le i \le n\},$ where $\{\varphi_i\}_{i=1}^n$ is a basis of $U$.
%$$
%\Vert v - Q_U v \Vert_\Gamma^2 = \min_{w \in U} \Vert v - w \Vert_\Gamma^2
%$$		
If $\#\Gamma = \dim(U)$ and $\Gamma$ is unisolvent for $U$, then $Q_U$ coincides with the interpolation operator $I_U$. 
\begin{proposition}\label{discrete_projections_nestedgrids_commute}
Let $U$ and $\tilde U$ be two finite-dimensional subspaces such that $U\subset \tilde U$. Let 
$Q_U$  be the discrete least-squares projection onto $U$ associated with a set of points $\Gamma$ in $X$, and let $Q_{\tilde U}$  be the discrete least-squares projection onto $\tilde U$ associated with  a set of points $\tilde \Gamma$ in $X$.  If either $\Gamma = \tilde \Gamma$ or 
$\Gamma \subset \tilde \Gamma$ and $\tilde \Gamma$ is unisolvent for $\tilde U$,   then 
$$
Q_{U}Q_{\tilde U} = Q_{\tilde U} Q_U = Q_U.
$$ 
\end{proposition}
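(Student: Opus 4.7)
My plan is to reduce both identities to Proposition~\ref{prop:properties_projections} by identifying the subspaces of $V^*$ along which the two discrete least-squares projections project, namely
$$W = \Bigl\{\sum_{x\in \Gamma} u(x)\,\delta_x \;:\; u\in U\Bigr\}\quad\text{and}\quad \tilde W = \Bigl\{\sum_{x\in \tilde\Gamma} \tilde u(x)\,\delta_x \;:\; \tilde u\in \tilde U\Bigr\},$$
and verifying the hypotheses of that proposition. The identity $Q_{\tilde U} Q_U = Q_U$ is immediate, since $Q_U$ takes values in $U\subset \tilde U$, so the first part of Proposition~\ref{prop:properties_projections} applies directly and gives $Q_{\tilde U} Q_U = Q_U$ without using any assumption on the grids.

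For the other identity $Q_U Q_{\tilde U} = Q_U$, I would invoke the second part of Proposition~\ref{prop:properties_projections}, which requires the inclusion $W\subset \tilde W$. In the case $\Gamma=\tilde\Gamma$, this is straightforward: any $u\in U$ also belongs to $\tilde U$, so $\sum_{x\in \Gamma} u(x)\,\delta_x$ lies in $\tilde W$ by definition. In the case $\Gamma\subsetneq \tilde\Gamma$ with $\tilde\Gamma$ unisolvent for $\tilde U$, the key observation is that unisolvence lets me prescribe nodal values freely on $\tilde\Gamma$: for each $u\in U$ I can choose the unique $\tilde u\in \tilde U$ with
$$\tilde u(x) = u(x) \text{ for } x\in \Gamma, \qquad \tilde u(x) = 0 \text{ for } x\in \tilde\Gamma\setminus \Gamma.$$
Then as functionals on $V$,
$$\sum_{x\in \tilde\Gamma} \tilde u(x)\,\delta_x = \sum_{x\in \Gamma} u(x)\,\delta_x,$$
showing that an arbitrary element of $W$ lies in $\tilde W$.

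Once $W\subset \tilde W$ is in hand, Proposition~\ref{prop:properties_projections} gives $Q_U Q_{\tilde U}=Q_{\tilde U} Q_U=Q_U$, which combined with the first identity closes the argument. The only nontrivial step is the extension step in the second case, where one has to turn the hypothesis ``$\tilde \Gamma$ is unisolvent for $\tilde U$'' into the existence of a $\tilde u\in \tilde U$ extending $u|_\Gamma$ by zero on $\tilde\Gamma\setminus\Gamma$; this is precisely the content of unisolvence as defined earlier in the paper, so no further work is required.
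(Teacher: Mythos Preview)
Your argument is correct and essentially identical to the paper's proof: both identify $W$ and $\tilde W$ as the spaces along which $Q_U$ and $Q_{\tilde U}$ project, verify $W\subset\tilde W$ in each of the two cases (the second via the unisolvence-based extension by zero), and then apply Proposition~\ref{prop:properties_projections}. The only cosmetic difference is that you separately note $Q_{\tilde U}Q_U=Q_U$ follows already from the first part of that proposition, whereas the paper obtains both identities at once from the second part.
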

\begin{proof}
$Q_U$ is the projection onto $U$ along $ W= \{\sum_{x\in \Gamma} u(x) \delta_x  : u \in U\}$, and 
$Q_{\tilde U}$ is the projection onto $\tilde U$ along $ \tilde W= \{\sum_{x\in \tilde \Gamma} \tilde u(x) \delta_x  : \tilde u \in \tilde U\}$. If we prove that $W\subset \tilde W$, then  the result follows from 
Proposition \ref{prop:properties_projections}. Let 
$w = \sum_{x\in \Gamma} u(x) \delta_x \in W$, with $u\in U$. If $\Gamma = \tilde \Gamma$, then since $u \in \tilde U$, we clearly have $w\in \tilde W$. If $\Gamma \subset \tilde \Gamma$
and $\tilde \Gamma$ is unisolvent for $\tilde U$, there exists a function $\tilde u \in \tilde U$ such that $\tilde u(x) = u(x)$ for all $x\in \Gamma$ and $\tilde u(x) = 0$ for all $x\in \tilde \Gamma\setminus \Gamma$. Therefore, $w = \sum_{x\in \tilde \Gamma} \tilde u(x) \delta_x$ is an element of $\tilde W$, which ends the proof. 
%Let $\{\varphi_i\}_{i=1}^{n}$ be a basis of $U$ and  $\{\varphi_i\}_{i=1}^{\tilde n}$ be a basis of $\tilde U$. Let $\Gamma = \{x^1,\hdots,x^m\}$, $\tilde \Gamma = \{x^1,\hdots,x^{\tilde m}\}$,  $W = \linearspan\{\sum_{k=1}^m \delta_{x^k} \varphi_i(x^k) \}_{i=1}^n$ and $\tilde W = \linearspan\{\sum_{k=1}^{\tilde m} \delta_{x^k} \varphi_i(x^k) \}_{i=1}^{\tilde n}$. To prove that $W \subset \tilde \tilde W$, we have to prove that matrices $\boldsymbol{\Phi} = ( \varphi_i(x^k))_{1\le k \le m,1\le i \le n} \in \Rbb^{m\times n}$ and $\boldsymbol{\tilde \Phi} = ( \varphi_i(x^k))_{1\le k \le \tilde m,1\le i \le \tilde n} \in \Rbb^{\tilde m\times \tilde n}$ are such that there exists a matrix $\mathbf{G} \in \Rbb^{\tilde n\times n }$ such that $ \boldsymbol{\tilde \Phi} \mathbf{G}  = \begin{pmatrix} \boldsymbol{ \Phi}  \\ \mathbf{0} \end{pmatrix}$.
\end{proof}

\section{Tensors}\label{sec:tensors}

Let $\Hc_\nu$ be Hilbert spaces of real-valued functions defined on sets $\Xc_\nu$ equipped with probability  measures $\mu_\nu$, $1\le \nu \le d$. We denote by $\Vert \cdot \Vert_{\Hc_\nu}$ the norm on $\Hc_\nu$ and by $(\cdot,\cdot)_{\Hc_\nu}$ the associated inner product. 
Let $\Xc = \Xc_1\times \hdots \times \Xc_d$ and $\mu = \mu_1\otimes \hdots \otimes \mu_d$. 
The tensor product of $d$ functions $v^\nu \in \Hc_\nu$, $1\le \nu\le d$, denoted $v^1\otimes \hdots \otimes v^d$, is a multivariate function defined on $\Xc$ such that $(v^1\otimes \hdots \otimes v^d)(x) = v^1(x_1)\hdots v^d(x_d)$ for $x=(x_1,\hdots,x_d) \in \Xc$. Such a function is called an \emph{elementary tensor}. 
The algebraic tensor space $\Hc_{1}\otimes_a \hdots \otimes_a \Hc_d$ is defined as the linear span 
of all elementary tensors, which is a pre-Hilbert space when equipped with the canonical inner product $(\cdot,\cdot)$ defined for elementary tensors by 
$$
( v^1\otimes \hdots \otimes v^d,w^1\otimes \hdots w^d ) = ( v^1 , w^1)_{\Hc_1} \hdots  ( v^d ,w^d)_{\Hc_d},
$$ 
and then extended by linearity to the whole algebraic tensor space. We denote by $\Vert \cdot\Vert$ the norm associated with inner product $(\cdot,\cdot)$.
A Hilbert tensor space 
$
\Hc = 
\overline{\Hc_{1}\otimes_a \hdots \otimes_a \Hc_d}^{\Vert\cdot\Vert}$
is then obtained by the completion of the algebraic tensor space, which we simply denote 
$$
\Hc = \Hc_1\otimes \hdots \otimes \Hc_d = \bigotimes_{\nu=1}^d \Hc_\nu.
$$
\begin{example}
Consider finite sets $\Xc_\nu$ and  $\Hc_\nu = \Rbb^{\Xc_\nu}$ equipped with the norm $\Vert v \Vert_{\Hc_\nu}^2 = \sum_{x_\nu \in \Xc_\nu} \mu_\nu(\{x_\nu\}) \vert v(x_\nu)\vert^2$. Then, $\Hc$ is the space of multidimensional arrays  $  \Rbb^{\Xc_1}\otimes \hdots \otimes \Rbb^{\Xc_d}$ and $\Vert v \Vert^2 = \sum_{x \in \Xc} \mu(\{x\}) \vert v(x) \vert^2 $, where $\mu(\{x_1,\hdots,x_d\}) = \prod_{\nu=1}^d \mu_\nu(\{x_\nu\})$. 
\end{example}
\begin{example}\label{ex:L2}
Consider $\Xc_\nu = \Rbb$, $\mu_\nu$ a finite measure on $\Rbb$, and $\Hc_\nu = L^2_{\mu_\nu}(\Xc_\nu)$ equipped with the natural norm $\Vert v \Vert_{\Hc_\nu}^2 = \int \vert v(x_\nu) \vert^2 \mu_\nu(dx_\nu)$. Then $\Hc$ is identified with $L^2_\mu(\Xc)$, where $\mu = \mu_1\otimes \hdots \otimes \mu_d$, and $\Vert v \Vert^2 = \int \vert v(x) \vert^2 \mu(dx).$
\end{example}
\begin{example}
Consider for $\Hc_\nu$ a reproducing kernel Hilbert space (RKHS) with reproducing kernel $k_\nu : \Xc_\nu\times \Xc_\nu \to \Rbb$. 
%For $x_\nu \in \Xc_\nu$, the point evaluation functional $\delta_{x_\nu} : v \mapsto v(x_\nu)$ is linear and continuous on $\Hc_\nu$ and $v(x_\nu)=(k_\nu(x_\nu,\cdot),v)_{\Hc_\nu} $. 
Then $\Hc$ is a RKHS with reproducing kernel $k(x,x') = k_1(x_1,x_1')\hdots k_d(x_d,x_d')$.
%, and $v(x) = (k(x,\cdot),v)$.
\end{example}
For a non-empty subset $\alpha$ in $\{1,\hdots,d\}:=D$, we let $\Xc_\alpha $ be the set $ \bigtimes_{\nu\in \alpha} \Xc_\nu$ equipped with the product measure $\mu_\alpha = \bigotimes_{\nu \in \alpha} \mu_\nu$. We denote 
by $\Hc_\alpha = \bigotimes_{\nu \in \alpha} \Hc_\nu$ the Hilbert tensor space of functions defined on $\Xc_\alpha$, equipped with the canonical norm $\Vert \cdot \Vert_{\Hc_\alpha}$ such that 
$$
\Vert \bigotimes_{\nu\in \alpha} v^\nu \Vert_{\Hc_\alpha} = \prod_{\nu \in \alpha} \Vert v^\nu \Vert_{\Hc_\nu}$$
for $v^\nu \in \Hc_\nu$, $1\le \nu\le d$.  We have $\Hc_D = \Hc$ and we use the convention $\Hc_\emptyset = \Rbb$.
\paragraph{Matricisations and $\alpha$-ranks.}

Let $\alpha\subset D$, with $\alpha \notin \{\emptyset,D\}$, and let $\alpha^c = D\setminus \alpha$ be its complement in $D$. For $x\in \Xc$, we denote by $x_\alpha$ the subset of variables $ (x_\nu)_{\nu\in \alpha}$. A tensor 
 $v \in \Hc$ can be identified with an order-two tensor $$\Mc_\alpha(v) \in \Hc_\alpha \otimes \Hc_{\alpha^c},$$ where $\Mc_\alpha$ is the \emph{matricisation operator} associated with $\alpha$, which defines a linear isometry between $\Hc$ and $\Hc_\alpha \otimes \Hc_{\alpha^c}$. We use  the conventions $\Mc_\emptyset (v) = \Mc_D (v) = v$ and $\Hc_\emptyset \otimes \Hc_D = \Hc_D \otimes \Hc_\emptyset = \Hc$.
 %, i.e. 
 %$
%\Vert \Mc_\alpha v \Vert = \Vert v \Vert.
% $ 
 The \emph{$\alpha$-rank} of a tensor $v \in \Hc$, denoted $\rank_\alpha(v)$, is defined as the rank of the order-two tensor $\Mc_{\alpha}(v)$, which is uniquely defined as the minimal integer such that 
 \begin{align}
 \Mc_{\alpha}(v) = \sum_{k=1}^{\rank_\alpha(v)} v^\alpha_k \otimes v^{\alpha^c}_k,\quad \text{or equivalently} \quad v(x) = \sum_{k=1}^{\rank_\alpha(v)} v^\alpha_k(x_\alpha)  v^{\alpha^c}_k(x_{\alpha^c}),\label{decomp_alpha}
 \end{align}
  for some functions $v^{\alpha}_k\in \Hc_\alpha$ and $v^{\alpha^c}_k \in \Hc_{\alpha^c}$ of complementary subsets of variables $x_\alpha$ and $x_{\alpha^c}$  respectively.
By convention, we have $\rank_\emptyset(v) = \rank_D(v) = 1.$ 
 From now on, when there is no ambiguity, $\Mc_\alpha(v)$ and $\Hc_\alpha  \otimes \Hc_{\alpha^c}$ will be identified with $v$ and $\Hc$ respectively.

\paragraph{Minimal subspaces.}
The \emph{minimal subspace} $U^{min}_\alpha(v)$ of $v$ is defined as the smallest closed subspace in $ \Hc_\alpha$ such that 
  $$
v \in U^{min}_\alpha(v) \otimes \Hc_{\alpha^c},
  $$
  and we have
  $\rank_\alpha(v) = \dim(U^{min}_\alpha(v))$
 (see \cite{Falco:2012uq}). 
 If $v$ admits the representation  \eqref{decomp_alpha}, then 
 $
 U^{min}_\alpha(v)$ is the closure of $ {\linearspan\{v^\alpha_k\}_{k=1}^{\rank_\alpha(v)}}.
 $
For any partition $S(\alpha)$ of $\alpha$, we have 
$$
U^{min}_{\alpha}(v) \subset \bigotimes_{\beta \in S(\alpha)} U^{min}_{\beta}(v).
$$
 We have $U^{min}_D(v) =\Rbb v$ and for any partition $S(D)$ of $D$, %and in particular for $A=\{\{1\},\hdots,\{d\}\}$, 
 $$
 v \in \bigotimes_{\beta \in S(D)} U^{min}_{\beta}(v) .
 $$

 \subsection{Operators on tensor spaces}\label{sec:operators}
 
  Let consider the Hilbert tensor space $\Hc = \bigotimes_{\nu=1}^d \Hc_\nu$ equipped with the canonical norm $\Vert \cdot\Vert $. For linear operators from $\Hc$ to $\Hc$, we also denote by $ \Vert \cdot\Vert $ the operator norm $\Vert \cdot\Vert_{\Hc\to \Hc} = \Vert \cdot\Vert_\Hc$.

 We denote by $\id$ the identity operator on $\Hc$. For a non-empty subset
 $\alpha \subset D$, we denote by $\id_\alpha$ the identity operator on $\Hc_\alpha$.
 For $A_\alpha$ in $L(\Hc_\alpha)$, we define the linear operator 
 $
 A_{\alpha} \otimes \id_{\alpha^c}$ such that 
  for $v^\alpha \in \Hc_\alpha$ and $ v^{\alpha^c} \in \Hc_{\alpha^c}$,
 $$
 (A_{\alpha} \otimes \id_{\alpha^c})(v^\alpha\otimes v^{\alpha^c} ) = (A_\alpha v^\alpha)\otimes v^{\alpha^c},
  $$
 and we extend this definition by linearity to the whole algebraic tensor space $\Hc_{\alpha}\otimes_a \Hc_{\alpha^c}$. 
For a finite dimensional tensor space $\Hc$, this completely characterizes a linear operator on $\Hc$. For 
an infinite dimensional tensor space $\Hc$, if $A_\alpha \in \Lc(U_\alpha,\Hc_\alpha)$, with $U_\alpha \subset \Hc_\alpha$, then $A_{\alpha} \otimes \id_{\alpha^c}$ can be extended by continuity to $U_\alpha \otimes \Hc$.

We denote by $\Ac_\alpha$, using calligraphic font style, the linear operator in $L(\Hc)$ associated with an operator  $A_\alpha$ in $L(\Hc_\alpha)$, defined by 
$
\Ac_\alpha = \Mc_\alpha^{-1} (A_{\alpha} \otimes \id_{\alpha^c}) \Mc_\alpha,
$
and simply denoted 
$$
\Ac_\alpha = A_{\alpha} \otimes \id_{\alpha^c}
$$  
when there is no ambiguity.  
If $A_\alpha \in \Lc(\Hc_\alpha)$, then 
 $\Ac_\alpha  \in \Lc(\Hc)$ and the two operators have the same operator norm 
$
\Vert \Ac_\alpha \Vert  = \Vert A_\alpha \Vert_{\Hc_\alpha}.$ Also, we have the  following more general result.  
\begin{proposition}\label{prop:Aalpha_correspondance}
If $A_\alpha \in \Lc(U_\alpha,\Hc_\alpha)$, with $U_\alpha \subset \Hc_\alpha$, then 
 $\Ac_\alpha  \in \Lc(U_\alpha\otimes \Hc_{\alpha^c},\Hc)$ and the two operators have the same operator norm 
$$
\Vert \Ac_\alpha \Vert_{U_\alpha\otimes \Hc_{\alpha^c} \to \Hc}  = \Vert A_\alpha \Vert_{U_\alpha \to \Hc_\alpha}.$$   
\end{proposition}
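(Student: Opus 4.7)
The plan is to prove the two directions of the norm identity separately and then invoke a density argument to lift the bound from the algebraic tensor product to the full Hilbert tensor product $U_\alpha \otimes \Hc_{\alpha^c}$. For the lower bound $\Vert \Ac_\alpha \Vert_{U_\alpha\otimes \Hc_{\alpha^c} \to \Hc} \ge \Vert A_\alpha \Vert_{U_\alpha \to \Hc_\alpha}$, I test $\Ac_\alpha$ on elementary tensors $u \otimes v$ with $u \in U_\alpha$ and $v \in \Hc_{\alpha^c}$ of unit norm. By the defining action of $\Ac_\alpha$ and the cross-norm property of the canonical inner product, $\Vert \Ac_\alpha (u\otimes v)\Vert = \Vert A_\alpha u \Vert_{\Hc_\alpha} \Vert v \Vert_{\Hc_{\alpha^c}} = \Vert A_\alpha u\Vert_{\Hc_\alpha}$ while $\Vert u\otimes v \Vert = \Vert u \Vert_{\Hc_\alpha}$, so taking the supremum over such $u,v$ yields the lower bound.

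For the upper bound, I first work on the algebraic tensor space $U_\alpha \otimes_a \Hc_{\alpha^c}$. Any element $w$ there is a finite sum of elementary tensors, and by performing an SVD-type orthonormalization on the $\Hc_{\alpha^c}$ factors (which only recombines the $u$-factors within the finite-dimensional span of the original $\alpha$-factors and therefore keeps them in $U_\alpha$), I may write $w = \sum_{k=1}^r u_k \otimes v_k$ with $\{v_k\}$ orthonormal in $\Hc_{\alpha^c}$ and $u_k \in U_\alpha$. Then $\Ac_\alpha w = \sum_k (A_\alpha u_k)\otimes v_k$, and orthonormality of $\{v_k\}$ together with the cross-norm property gives
\[
\Vert \Ac_\alpha w \Vert^2 = \sum_{k=1}^r \Vert A_\alpha u_k \Vert_{\Hc_\alpha}^2 \le \Vert A_\alpha \Vert_{U_\alpha \to \Hc_\alpha}^2 \sum_{k=1}^r \Vert u_k \Vert_{\Hc_\alpha}^2 = \Vert A_\alpha \Vert_{U_\alpha \to \Hc_\alpha}^2 \Vert w \Vert^2.
\]
Combined with the lower bound, this yields equality of the norms on the algebraic tensor space.

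To conclude, I extend $\Ac_\alpha$ to all of $U_\alpha \otimes \Hc_{\alpha^c}$ by continuity: since $U_\alpha \otimes_a \Hc_{\alpha^c}$ is dense in the Hilbert completion $U_\alpha \otimes \Hc_{\alpha^c}$ and the operator is bounded on this dense subspace by $\Vert A_\alpha \Vert_{U_\alpha \to \Hc_\alpha}$, the extension exists, is unique, preserves the operator norm, and coincides with the definition of $\Ac_\alpha$ given in Section \ref{sec:operators}. The main technical point I expect to watch is the SVD-style step, namely confirming that the orthonormalization can be performed while retaining $u_k \in U_\alpha$; this reduces to a linear-algebraic fact about finite-rank two-tensors and the invariance of the $\alpha$-side span under the change of basis on the $\alpha^c$-side.
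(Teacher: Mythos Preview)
The paper states this proposition without proof, treating it as a standard fact about the canonical tensor norm (it later cites \cite[Proposition~4.127]{hackbusch2012book} for the related uniform-crossnorm property). Your argument is correct and is precisely the standard proof one would give: the lower bound via elementary tensors, the upper bound on the algebraic tensor product via an orthonormal expansion on the $\Hc_{\alpha^c}$ side, and extension by density. Your caution about the SVD/Gram--Schmidt step is well placed but the justification is exactly as you outline: orthonormalizing the $\alpha^c$-factors only recombines the $\alpha$-factors linearly within their original span, which lies in the subspace $U_\alpha$.
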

\begin{corollary}\label{lem:operator_continuous_Umin}
For a tensor $v\in \Hc$ and an operator 
$A_\alpha \in \Lc(U^{min}_\alpha(v),\Hc_\alpha)$, 
 $$
 \Vert \Ac_\alpha v \Vert \le  \Vert A_{\alpha} \Vert_{U^{min}_\alpha(v)\to \Hc_\alpha}  \Vert v \Vert 
 .
 $$
\end{corollary}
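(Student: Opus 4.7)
The corollary is an essentially immediate specialization of Proposition~\ref{prop:Aalpha_correspondance}, so the plan is very short. I would simply apply Proposition~\ref{prop:Aalpha_correspondance} with the choice $U_\alpha = U^{min}_\alpha(v)$, which is a (closed) subspace of $\Hc_\alpha$ by definition of the minimal subspace.

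The first step is to record that, by the defining property of $U^{min}_\alpha(v)$, one has $v \in U^{min}_\alpha(v) \otimes \Hc_{\alpha^c}$ (under the identification via $\Mc_\alpha$). Since the canonical norm on the Hilbert tensor space $U^{min}_\alpha(v)\otimes \Hc_{\alpha^c}$, viewed as a closed subspace of $\Hc_\alpha \otimes \Hc_{\alpha^c} = \Hc$, is the restriction of $\Vert\cdot\Vert$, we have $\Vert v \Vert_{U^{min}_\alpha(v)\otimes \Hc_{\alpha^c}} = \Vert v \Vert$.

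The second step is to invoke Proposition~\ref{prop:Aalpha_correspondance}: since $A_\alpha \in \Lc(U^{min}_\alpha(v),\Hc_\alpha)$, the associated operator $\Ac_\alpha$ belongs to $\Lc(U^{min}_\alpha(v) \otimes \Hc_{\alpha^c},\Hc)$ with
$$
\Vert \Ac_\alpha \Vert_{U^{min}_\alpha(v)\otimes \Hc_{\alpha^c} \to \Hc} = \Vert A_\alpha \Vert_{U^{min}_\alpha(v)\to \Hc_\alpha}.
$$
Applying this operator norm bound to $v$ (which lives in the source space by the first step) yields
$$
\Vert \Ac_\alpha v \Vert \le \Vert A_\alpha \Vert_{U^{min}_\alpha(v)\to \Hc_\alpha} \Vert v \Vert,
$$
which is the claim.

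There is no real obstacle here; the only point deserving care is the identification of norms, namely that the canonical tensor norm on the closed subspace $U^{min}_\alpha(v)\otimes \Hc_{\alpha^c}$ coincides with the restriction of the canonical norm on $\Hc$, so that $\Vert v \Vert$ on both sides refers to the same quantity. Once this is noted, the corollary is a one-line consequence of the preceding proposition.
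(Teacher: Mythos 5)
Your proof is correct and is exactly the intended deduction: the paper states this as an immediate corollary of Proposition~\ref{prop:Aalpha_correspondance} (with no written proof), obtained by taking $U_\alpha = U^{min}_\alpha(v)$ and using that $v \in U^{min}_\alpha(v)\otimes \Hc_{\alpha^c}$ by definition of the minimal subspace. Your added remark about the canonical norm restricting correctly to the closed subspace is a sensible point of care but raises no issue.
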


\bigskip

%For two non-empty subsets $\alpha$ and $\beta$ in $D$, let $A_\alpha$ and $A_\beta$ be two linear operators in $\Lc(\Hc_\alpha)$ and $\Lc(\Hc_\beta)$ respectively, and let $\Ac_\alpha$ and $\Ac_\beta$ be the corresponding operators in $\Lc(\Hc).$
%\begin{lemma}
%For $\alpha \subset \beta$ or 
%\end{lemma}
%	
%\bigskip

Let $S = \{\alpha_1,\hdots,\alpha_K\}$ be a collection of disjoint subsets of $D$ and let $A_{\alpha} \in L(\Hc_\alpha)$ be linear operators, $\alpha \in S$. Then we can define a linear operator $A_{\alpha_1} \otimes \hdots \otimes A_{\alpha_K}:=\bigotimes_{\alpha\in S} A_\alpha $ on $\Hc_{\alpha_1} \otimes_a \hdots \otimes_a \Hc_{\alpha_K}$ such that 
$$
(\bigotimes_{\alpha\in S} A_\alpha )(\bigotimes_{\alpha\in S} v^\alpha) = \bigotimes_{\alpha\in S} (A_\alpha v^\alpha)
$$
for $v^\alpha \in \Hc_\alpha$, $\alpha\in S$. 
The operator 
 $\bigotimes_{\alpha\in S} A_\alpha$  can be identified with an operator $$\Ac = \prod_{\alpha\in S} \Ac_\alpha ,$$
defined on the algebraic tensor space $\Hc_1\otimes_a \hdots \otimes_a \Hc_d$. The definition of $\Ac$ is independent of the ordering of the elements of $S$. If the operators $A_\alpha$ are continuous, then $\Ac$ defines a continuous operator from $\Hc$ to $\Hc$ and 
since $\Vert \cdot \Vert$ is a uniform crossnorm (see \cite[Proposition 4.127]{hackbusch2012book}), the operator $\Ac$ has for 
 operator norm 
 $$
\Vert \Ac \Vert = \prod_{\alpha \in S} \Vert \Ac_\alpha \Vert = \prod_{\alpha\in S} \Vert A_\alpha \Vert_{\Hc_\alpha}.
$$
Also, we have the following more general result.
\begin{proposition}
Let $S$ be a collection of disjoint subsets of $D$ and let $\beta \subset D$ such that $\beta \cup (\cup_{\alpha\in S} \alpha)=D$. Let $U_\alpha$ be a subspace of $\Hc_\alpha$ and 
 $A_{\alpha} \in \Lc(U_\alpha, \Hc_\alpha)$, for $\alpha \in S$. Then $\Ac = \prod_{\alpha \in S} \Ac_\alpha$ is a continuous operator from $\Uc := (\bigotimes_{\alpha \in S} U_\alpha)\otimes \Hc_{\beta}$ to $\Hc$
 such that
 $$
 \Vert \Ac \Vert_{\Uc \to \Hc} = \prod_{\alpha \in S} \Vert \Ac_\alpha \Vert_{U_\alpha\otimes \Hc_{\alpha^c} \to \Hc} =\prod_{\alpha \in S} \Vert A_\alpha \Vert_{U_\alpha  \to \Hc_\alpha}  .
 $$  
\end{proposition}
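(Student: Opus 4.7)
The plan is to establish the claimed norm equality by proving matching upper and lower bounds on $\Vert \Ac\Vert_{\Uc \to \Hc}$, viewing the result as the natural multi-factor extension of Proposition~\ref{prop:Aalpha_correspondance}. A preliminary observation is that, since the elements of $S$ are pairwise disjoint, the condition $\beta \cup (\cup_{\alpha \in S}\alpha) = D$ lets one take $\{\beta\} \cup S$ to be a partition of $D$, giving the clean factorization $\Hc = (\bigotimes_{\alpha \in S}\Hc_\alpha) \otimes \Hc_\beta$. Under this partition the operators $\Ac_\alpha$, $\alpha \in S$, commute since they act on disjoint groups of variables, so $\Ac = \prod_{\alpha \in S}\Ac_\alpha$ is well defined independently of the ordering of $S$.

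For the upper bound, I would first test on an elementary tensor $v = \bigotimes_{\alpha \in S} v^\alpha \otimes v^\beta$ with $v^\alpha \in U_\alpha$ and $v^\beta \in \Hc_\beta$. Since the $\Ac_\alpha$ act on disjoint factors, $\Ac v = \bigotimes_{\alpha \in S}(A_\alpha v^\alpha) \otimes v^\beta$, and invoking the uniform crossnorm property of $\Vert \cdot \Vert$ yields
$$
\Vert \Ac v \Vert = \prod_{\alpha \in S}\Vert A_\alpha v^\alpha\Vert_{\Hc_\alpha} \cdot \Vert v^\beta\Vert_{\Hc_\beta} \le \Big(\prod_{\alpha \in S}\Vert A_\alpha\Vert_{U_\alpha \to \Hc_\alpha}\Big)\Vert v \Vert.
$$
By linearity this bound extends to the algebraic tensor subspace $(\bigotimes_{\alpha,a} U_\alpha) \otimes_a \Hc_\beta$, and by density and continuity it extends uniquely to its completion $\Uc$. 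This simultaneously yields the continuity of $\Ac : \Uc \to \Hc$ and the inequality $\Vert \Ac\Vert_{\Uc \to \Hc} \le \prod_{\alpha}\Vert A_\alpha\Vert_{U_\alpha \to \Hc_\alpha}$.

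For the matching lower bound, for any $\epsilon > 0$ I would choose unit vectors $v^\alpha \in U_\alpha$ achieving $\Vert A_\alpha v^\alpha\Vert_{\Hc_\alpha} \ge \Vert A_\alpha\Vert_{U_\alpha \to \Hc_\alpha} - \epsilon$ for every $\alpha \in S$, together with any unit $v^\beta \in \Hc_\beta$. Their elementary tensor $v$ has unit norm by the crossnorm identity and satisfies $\Vert \Ac v\Vert = \prod_\alpha \Vert A_\alpha v^\alpha\Vert_{\Hc_\alpha} \ge \prod_{\alpha}(\Vert A_\alpha\Vert_{U_\alpha \to \Hc_\alpha} - \epsilon)$; sending $\epsilon \to 0$ closes the gap and delivers the first equality. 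The second equality of the statement is then immediate from Proposition~\ref{prop:Aalpha_correspondance} applied to each $A_\alpha$ individually.

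The main delicate step I expect is the density/continuity extension from the algebraic tensor subspace to $\Uc$: the crossnorm bound must survive passage to the norm completion, which is where the uniform crossnorm property of the Hilbertian $\Vert \cdot \Vert$ does the real work (and where one must take care that the subspaces $U_\alpha$ need not be closed in $\Hc_\alpha$, so one works with the operator norm $\Vert A_\alpha\Vert_{U_\alpha \to \Hc_\alpha}$ throughout rather than extending $A_\alpha$ to $\overline{U_\alpha}$). Once that extension is justified, the rest is bookkeeping and the selection of near-optimal test vectors factor by factor.
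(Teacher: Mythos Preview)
The paper does not give a proof of this proposition; it is stated as the natural extension of the uniform crossnorm identity $\Vert \bigotimes_\alpha A_\alpha\Vert = \prod_\alpha \Vert A_\alpha\Vert_{\Hc_\alpha}$ (cited from \cite[Proposition~4.127]{hackbusch2012book}) together with Proposition~\ref{prop:Aalpha_correspondance}. Your overall strategy---upper bound via the crossnorm, lower bound via near-optimal elementary test vectors---is the standard one and is sound in outline.

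There is, however, a real slip in your upper-bound argument. You write that the inequality $\Vert \Ac v\Vert \le C\Vert v\Vert$ established on elementary tensors ``extends by linearity'' to the algebraic tensor space. It does not: for $v = v_1 + v_2$ with $v_1, v_2$ elementary you only get $\Vert \Ac v\Vert \le C(\Vert v_1\Vert + \Vert v_2\Vert)$, and $\Vert v_1\Vert + \Vert v_2\Vert$ is in general strictly larger than $\Vert v\Vert$. The passage from elementary tensors to finite sums is exactly where the \emph{uniform} crossnorm property is needed---not, as you suggest, at the later density/completion step, which is automatic once boundedness on the algebraic space is established. A clean way to repair this is to pass to the closures $\overline{U_\alpha}$ (Hilbert subspaces of $\Hc_\alpha$), extend each $A_\alpha$ by continuity to $\overline{A_\alpha}\in \Lc(\overline{U_\alpha},\Hc_\alpha)$ with the same operator norm, and then invoke the uniform crossnorm identity directly on the Hilbert tensor product $(\bigotimes_{\alpha\in S}\overline{U_\alpha})\otimes \Hc_\beta = \Uc$: this yields $\Vert \Ac\Vert_{\Uc\to\Hc} = \prod_{\alpha}\Vert \overline{A_\alpha}\Vert_{\overline{U_\alpha}\to\Hc_\alpha} = \prod_{\alpha}\Vert A_\alpha\Vert_{U_\alpha\to\Hc_\alpha}$ in one stroke. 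With this route your separate lower-bound argument becomes redundant (the identity is already an equality), though it is correct as written.
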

\begin{corollary}
Let $S$ be a collection of disjoint subsets of $D$. For a tensor $v\in \Hc$ and operators $A_\alpha$, $\alpha \in S$, such that $A_\alpha \in \Lc(U^{min}_\alpha(v),\Hc_\alpha)$, the operator $\Ac = \prod_{\alpha \in S}  \Ac_\alpha$ is such that 
 $$
 \Vert \Ac v \Vert \le \Vert v \Vert \prod_{\alpha\in S} \Vert A_{\alpha} \Vert_{U^{min}_\alpha(v)\to \Hc_\alpha}  .
 $$
\end{corollary}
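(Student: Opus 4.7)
The plan is to bootstrap the preceding proposition by choosing $U_\alpha = U^{min}_\alpha(v)$ for each $\alpha \in S$ and checking that $v$ itself sits inside the resulting domain $\Uc$. First I would introduce $\beta = D \setminus \bigcup_{\alpha \in S}\alpha$, so that $\{\beta\} \cup S$ is a partition of $D$ (with the convention that $\Hc_\beta = \Rbb$ when $\beta = \emptyset$). This puts us in the exact hypothesis of the previous proposition: $\beta$ together with the disjoint $\alpha \in S$ covers $D$.

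Next I would invoke the minimal subspace property recalled just after the definition of $U^{min}_\alpha(v)$, namely that $v \in \bigotimes_{\gamma \in S(D)} U^{min}_\gamma(v)$ for any partition $S(D)$ of $D$. Applying this to the partition $\{\beta\} \cup S$ and using $U^{min}_\beta(v) \subset \Hc_\beta$, I conclude that
\[
v \in \Uc := \Bigl(\bigotimes_{\alpha \in S} U^{min}_\alpha(v)\Bigr) \otimes \Hc_\beta,
\]
and that the norm of $v$ in this Hilbert tensor subspace coincides with $\Vert v \Vert$, since the canonical norm on $\Uc$ is simply the restriction of $\Vert \cdot \Vert$ to the closed subspace $\Uc \subset \Hc$ (the canonical norm is a uniform crossnorm, so the embedding is isometric).

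Then I would apply the preceding proposition with the choice $U_\alpha = U^{min}_\alpha(v)$, which yields that $\Ac = \prod_{\alpha \in S}\Ac_\alpha$ is a continuous operator from $\Uc$ to $\Hc$ with
\[
\Vert \Ac \Vert_{\Uc \to \Hc} = \prod_{\alpha \in S} \Vert A_\alpha \Vert_{U^{min}_\alpha(v) \to \Hc_\alpha}.
\]
The corollary now follows immediately by applying this operator-norm identity to $v \in \Uc$ and using $\Vert v \Vert_\Uc = \Vert v \Vert$.

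There is no genuine obstacle here; this is a direct specialization of the preceding proposition. The only point that deserves a line of justification is that $v$ actually belongs to $\Uc$, which is where the minimal subspace property does the real work. Everything else — continuity of $\Ac$ on $\Uc$, the product form of the operator norm, and the isometric nature of the embedding $\Uc \hookrightarrow \Hc$ — is already supplied by the surrounding material (in particular Proposition \ref{prop:Aalpha_correspondance} and the crossnorm property of $\Vert\cdot\Vert$).
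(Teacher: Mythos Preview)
Your proposal is correct and follows exactly the approach the paper intends: the corollary is stated without proof as an immediate consequence of the preceding proposition, and your argument—specializing that proposition with $U_\alpha = U^{min}_\alpha(v)$ and using the minimal subspace property to check that $v \in \Uc$—is precisely the natural way to fill in the details.
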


 \subsection{Partial evaluations of tensors}

Let $\alpha$ be a
 non-empty subset of $ D$. For a linear form $\psi_\alpha \in \Hc_\alpha^*$, $\psi_\alpha \otimes id_{\alpha^c}$ is a  linear operator from $\Hc_{\alpha}\otimes_a \Hc_{\alpha^c}$ to $\Hc_{\alpha^c}$ such that 
$
(\psi_\alpha\otimes id_{\alpha^c} )(v^\alpha\otimes v^{\alpha^c}) = \psi_\alpha(v^\alpha) v^{\alpha^c}.$ 
If $\psi_\alpha \in \Hc_{\alpha}'$, the definition of $\psi_\alpha \otimes id_{\alpha^c}$  can be extended by continuity to $\Hc$. Then $\psi_\alpha \otimes id_{\alpha^c}$ is a continuous operator from $\Hc$ to $\Hc_{\alpha^c}$ with operator norm
$\Vert \psi_{\alpha} \otimes id_{\alpha^c} \Vert_{\Hc \to \Hc_{\alpha^c}}= \Vert \psi_\alpha \Vert_{\Hc_\alpha'}.$ Also, we have the following result. 
\begin{proposition}\label{prop:partial_linear_form_subspace}
If $\psi_\alpha \in U_{\alpha}'$, with $U_\alpha $ a subspace of $ \Hc_\alpha$, then 
$\psi_\alpha \otimes id_{\alpha^c} \in \Lc(U_\alpha\otimes \Hc_{\alpha^c},\Hc_{\alpha^c})$  and 
$$
\Vert \psi_{\alpha} \otimes id_{\alpha^c} \Vert_{U_\alpha\otimes \Hc_{\alpha^c} \to \Hc_{\alpha^c}}= \Vert \psi_\alpha \Vert_{U_\alpha'}.
$$
\end{proposition}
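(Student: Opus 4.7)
The plan is to recognize that this statement is essentially a special case of Proposition \ref{prop:Aalpha_correspondance} under the convention $\Hc_\emptyset = \Rbb$: the continuous linear form $\psi_\alpha \in U_\alpha' = \Lc(U_\alpha, \Rbb) = \Lc(U_\alpha, \Hc_\emptyset)$ plays the role of $A_\alpha$, and then $\psi_\alpha \otimes id_{\alpha^c}$ is the associated operator mapping $U_\alpha \otimes \Hc_{\alpha^c}$ into $\Hc_\emptyset \otimes \Hc_{\alpha^c} = \Hc_{\alpha^c}$. Applying Proposition \ref{prop:Aalpha_correspondance} directly would give the conclusion.

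For a self-contained argument, I would prove the two inequalities separately. For the upper bound $\le$, I would work first on the algebraic tensor product $U_\alpha \otimes_a \Hc_{\alpha^c}$, where any element admits a singular value decomposition $v = \sum_{k=1}^r \sigma_k u_k^\alpha \otimes u_k^{\alpha^c}$ with $(u_k^\alpha)_k$ orthonormal in $U_\alpha \subset \Hc_\alpha$ and $(u_k^{\alpha^c})_k$ orthonormal in $\Hc_{\alpha^c}$. Applying the operator gives $(\psi_\alpha \otimes id_{\alpha^c})(v) = \sum_k \sigma_k \psi_\alpha(u_k^\alpha) u_k^{\alpha^c}$, whose squared norm in $\Hc_{\alpha^c}$ equals $\sum_k \sigma_k^2 |\psi_\alpha(u_k^\alpha)|^2 \le \Vert \psi_\alpha \Vert_{U_\alpha'}^2 \sum_k \sigma_k^2 = \Vert \psi_\alpha \Vert_{U_\alpha'}^2 \Vert v \Vert^2$, using orthonormality of $\{u_k^\alpha\}$ and $\Vert u_k^\alpha \Vert_{\Hc_\alpha} = 1$. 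Since $U_\alpha \otimes_a \Hc_{\alpha^c}$ is dense in $U_\alpha \otimes \Hc_{\alpha^c}$ and the bound is uniform, the operator extends by continuity with the same norm bound.

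For the reverse inequality $\ge$, I would test on elementary tensors: for any $\varepsilon > 0$, by definition of $\Vert \psi_\alpha \Vert_{U_\alpha'}$ there exists $v^\alpha \in U_\alpha$ with $\Vert v^\alpha \Vert_{\Hc_\alpha} = 1$ and $|\psi_\alpha(v^\alpha)| \ge (1-\varepsilon) \Vert \psi_\alpha \Vert_{U_\alpha'}$. Picking any unit-norm $v^{\alpha^c} \in \Hc_{\alpha^c}$, the tensor $v^\alpha \otimes v^{\alpha^c}$ has unit norm in $U_\alpha \otimes \Hc_{\alpha^c}$, while $(\psi_\alpha \otimes id_{\alpha^c})(v^\alpha \otimes v^{\alpha^c}) = \psi_\alpha(v^\alpha) v^{\alpha^c}$ has norm $|\psi_\alpha(v^\alpha)| \ge (1-\varepsilon)\Vert \psi_\alpha \Vert_{U_\alpha'}$. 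Letting $\varepsilon \to 0$ concludes.

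The only mildly delicate step is the continuity extension from the algebraic to the Hilbert tensor product, which relies on $U_\alpha \otimes_a \Hc_{\alpha^c}$ being dense in $U_\alpha \otimes \Hc_{\alpha^c}$ (equipped with the induced canonical norm) and on the SVD representation being valid for algebraic tensors of order two; both are standard facts from the Hilbert tensor space framework recalled in Section \ref{sec:tensors}.
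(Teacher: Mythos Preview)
Your proposal is correct. The paper does not give an explicit proof of this proposition; it is stated as a direct analogue (indeed, as you observe, a special case under the convention $\Hc_\emptyset=\Rbb$) of Proposition~\ref{prop:Aalpha_correspondance}, and your first paragraph captures exactly this intended reduction, while your self-contained SVD/elementary-tensor argument is a valid direct proof.
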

\begin{corollary}\label{lem:partial_linear_form}
For a tensor $v \in \Hc$ and $\psi_\alpha \in U^{min}_\alpha(v)'$, we have 
$$
\Vert (\psi_\alpha \otimes id_{\alpha^c}) v \Vert \le \Vert \psi_{\alpha} \Vert_{(U^{min}_\alpha(v))'} \Vert v \Vert.
$$
\end{corollary}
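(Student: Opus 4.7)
The plan is to deduce the corollary as a direct application of Proposition \ref{prop:partial_linear_form_subspace} with the specific choice $U_\alpha = U^{min}_\alpha(v)$. The key observation is that the very definition of the minimal subspace places $v$ into a tensor space where the proposition applies, and then the operator norm bound yields the stated inequality with no further work.

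More precisely, I would proceed as follows. First, invoke the definition of minimal subspace recalled earlier in the paper: $v \in U^{min}_\alpha(v) \otimes \Hc_{\alpha^c}$. Next, apply Proposition \ref{prop:partial_linear_form_subspace} to the subspace $U_\alpha = U^{min}_\alpha(v)$ and the linear form $\psi_\alpha \in U^{min}_\alpha(v)'$. This yields that $\psi_\alpha \otimes id_{\alpha^c}$ is a continuous operator from $U^{min}_\alpha(v) \otimes \Hc_{\alpha^c}$ into $\Hc_{\alpha^c}$, with operator norm exactly $\Vert \psi_\alpha \Vert_{(U^{min}_\alpha(v))'}$. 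Since $v$ lies in this domain, the operator norm inequality gives
$$
\Vert (\psi_\alpha \otimes id_{\alpha^c}) v \Vert_{\Hc_{\alpha^c}} \le \Vert \psi_\alpha \otimes id_{\alpha^c} \Vert_{U^{min}_\alpha(v) \otimes \Hc_{\alpha^c} \to \Hc_{\alpha^c}}\, \Vert v \Vert,
$$
which is precisely the claimed bound after substituting the value of the operator norm.

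There is essentially no obstacle here since Proposition \ref{prop:partial_linear_form_subspace} does all the analytic work. The only subtlety worth double-checking is that the norm of $v$ as an element of the Hilbert tensor subspace $U^{min}_\alpha(v) \otimes \Hc_{\alpha^c}$ coincides with its norm in $\Hc$; this is immediate because $U^{min}_\alpha(v) \otimes \Hc_{\alpha^c}$ is (by construction, via closure with respect to the canonical norm) a closed subspace of $\Hc$ with the induced norm. Hence the inequality of the corollary follows in a single line from the proposition together with the minimal subspace membership.
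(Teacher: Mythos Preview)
Your proposal is correct and matches the paper's intended approach exactly: the corollary is stated without proof in the paper precisely because it follows immediately from Proposition~\ref{prop:partial_linear_form_subspace} applied with $U_\alpha = U^{min}_\alpha(v)$, together with the membership $v \in U^{min}_\alpha(v) \otimes \Hc_{\alpha^c}$.
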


%Let $V$ be a Hilbert space of functions defined on a set $X$. For $x\in X$, we define the 
%point evaluation functional $\delta_{x} \in V^*$ by $\langle \delta_{x },v\rangle  =  v(x)$. 
For 
  a point $x_\alpha \in \Xc_\alpha$, we denote by $\delta_{x_\alpha} \in \Hc_\alpha^*$   the point evaluation functional at $x_\alpha$, defined by $\langle \delta_{x_\alpha} , v^\alpha \rangle = v^\alpha(x_\alpha)$ for $v^\alpha \in \Hc_\alpha$.
Then $\delta_{x_\alpha} \otimes id_{\alpha^c}$ defines a partial evaluation functional, which is a linear operator from $\Hc$ to $\Hc_{\alpha^c}$ such that 
 $$
 (\delta_{x_\alpha}\otimes id_{\alpha^c})(v^\alpha\otimes v^{\alpha^c}) = v^\alpha(x_\alpha)v^{\alpha^c}.
 $$
From Corollary \ref{lem:partial_linear_form}, we deduce that 
for a given tensor $v \in \Hc$, if $\delta_{x_{\alpha}} \in U^{min}_\alpha(v)'$, then the definition of $\delta_{x_{\alpha}}\otimes id_{\alpha^c}$ can be extended by continuity to 
 $U^{min}_\alpha(v) \otimes \Hc_{\alpha^c}$ and the partial evaluation 
$$
v(x_\alpha,\cdot) = (\delta_{x_\alpha}\otimes id_{\alpha^c})v 
$$
is an element of $\Hc_{\alpha^c}$ such that 
$$
\Vert v(x_\alpha,\cdot) \Vert_{\Hc_{\alpha^c}} = \Vert (\delta_{x_\alpha}\otimes id_{\alpha^c})v  \Vert \le \Vert \delta_{x_\alpha} \Vert_{ U^{min}_\alpha(v)'} \Vert v \Vert.
$$

%If $\Kc_\alpha$ is a RKHS with reproducing kernel $k_\alpha$, then $\delta_{x_\alpha} \in \Kc_\alpha'$ for all $x_\alpha \in \Xc_\alpha$, and $\Vert \delta_{x_\alpha} \Vert_{\Kc_{\alpha}'} = k_\alpha(x_\alpha,x_\alpha)$.

\subsection{Projection of tensors}
Let $\alpha $ be a non-empty and strict subset of $D$ and let $U_\alpha$ be a {finite-dimensional subspace} of $\Hc_\alpha$.
If   $P_{\alpha}$ is a projection from $\Hc_\alpha$ onto $U_\alpha$, then 
$P_{\alpha}\otimes id_{\alpha^c}$ is a projection from $\Hc_\alpha\otimes \Hc_{\alpha^c}$ onto $U_\alpha \otimes \Hc_{\alpha^c}$. 

\begin{proposition}\label{prop:rank_bound}
Let $v \in \Hc$ and $\alpha,\beta \subset D$. Let $P_{\beta}$ be a projection 
from $\Hc_\beta$ to a subspace $U_\beta$ and let $\Pc_{\beta}$ be the corresponding projection onto $U_\beta\otimes \Hc_{\beta^c}$.  
  If $\beta \subset \alpha$ or $\beta \subset D\setminus \alpha$, we have
$$
\rank_\alpha(\Pc_{\beta} v) \le \rank_\alpha( v).
$$
\end{proposition}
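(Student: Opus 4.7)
The plan is to reduce to the two cases and, in each, exhibit an $\alpha$-decomposition of $\Pc_\beta v$ with at most $\rank_\alpha(v)$ terms by applying the tensor-product structure of $\Pc_\beta$ to an optimal $\alpha$-decomposition of $v$.

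If $\rank_\alpha(v)=\infty$ the inequality is trivial, so assume $r:=\rank_\alpha(v)<\infty$ and fix a decomposition $v=\sum_{k=1}^{r} v^\alpha_k\otimes v^{\alpha^c}_k$ with $v^\alpha_k\in\Hc_\alpha$ and $v^{\alpha^c}_k\in\Hc_{\alpha^c}$, as in \eqref{decomp_alpha}. The key observation is that $\Pc_\beta = P_\beta\otimes id_{\beta^c}$ is a tensor-product operator, and when $\beta$ lies entirely on one side of the partition $\{\alpha,\alpha^c\}$, it acts on only one of the two factors of the matricisation $\Mc_\alpha$.

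First, suppose $\beta\subset\alpha$. Writing $\alpha=\beta\cup(\alpha\setminus\beta)$, the operator $\Pc_\beta$ can be written, after matricisation $\Mc_\alpha$, as $(P_\beta\otimes id_{\alpha\setminus\beta})\otimes id_{\alpha^c}$; the first factor $P_\beta\otimes id_{\alpha\setminus\beta}$ is a linear operator on $\Hc_\alpha$ (continuous extension being justified by Proposition \ref{prop:Aalpha_correspondance} applied to $P_\beta\in\Lc(\Hc_\beta)$). Applying it termwise,
$$
\Pc_\beta v = \sum_{k=1}^{r} \bigl[(P_\beta\otimes id_{\alpha\setminus\beta})v^\alpha_k\bigr]\otimes v^{\alpha^c}_k,
$$
which is a representation of $\Pc_\beta v$ of the form \eqref{decomp_alpha} with $r$ terms. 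Hence $\rank_\alpha(\Pc_\beta v)\le r=\rank_\alpha(v)$. The case $\beta\subset\alpha^c$ is entirely symmetric: writing $\alpha^c=\beta\cup(\alpha^c\setminus\beta)$, the projection acts only on the second factor of $\Mc_\alpha$, yielding
$$
\Pc_\beta v = \sum_{k=1}^{r} v^\alpha_k\otimes\bigl[(P_\beta\otimes id_{\alpha^c\setminus\beta})v^{\alpha^c}_k\bigr],
$$
again a decomposition with at most $r$ terms.

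There is no serious obstacle; the only thing to be careful about is justifying that $\Pc_\beta$ really does split as an operator on the matricisation $\Mc_\alpha$ into a tensor product that acts on only one side when $\beta$ is contained in $\alpha$ or in $\alpha^c$. This is immediate from the definition $\Pc_\beta=P_\beta\otimes id_{\beta^c}$ and the fact that $id_{\beta^c}$ further factorizes across any partition of $\beta^c$, in particular across $(\alpha\setminus\beta,\alpha^c)$ in Case 1 or across $(\alpha,\alpha^c\setminus\beta)$ in Case 2; on elementary tensors this is straightforward, and extends by linearity and continuity to $\Hc$. Alternatively, one can phrase the argument through minimal subspaces: in Case 1, $\Pc_\beta v\in (P_\beta\otimes id_{\alpha\setminus\beta})(U^{min}_\alpha(v))\otimes\Hc_{\alpha^c}$, so $U^{min}_\alpha(\Pc_\beta v)\subset(P_\beta\otimes id_{\alpha\setminus\beta})(U^{min}_\alpha(v))$ and hence has dimension at most $\dim U^{min}_\alpha(v)=\rank_\alpha(v)$, with the analogous statement in Case 2.
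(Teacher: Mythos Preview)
Your proof is correct and follows essentially the same approach as the paper: fix an optimal $\alpha$-decomposition of $v$, factor $\Pc_\beta$ as $(P_\beta\otimes id_{\alpha\setminus\beta})\otimes id_{\alpha^c}$ (or symmetrically), apply it termwise, and conclude from the definition of $\alpha$-rank. Your additional remarks on the infinite-rank case, the justification of the splitting, and the alternative via minimal subspaces go slightly beyond the paper's terse argument but do not change the strategy.
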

\begin{proof}
A tensor $v$ admits a representation
$
v = \sum_{k=1}^{\rank_\alpha(v)} v^\alpha_k \otimes w^{\alpha^c}_k.
$
If $\beta \subset \alpha$, then $\Pc_{\beta} = (P_{\beta}\otimes \id_{\alpha\setminus \beta} ) \otimes \id_{D\setminus \alpha}$ and 
$
\Pc_{\beta} v = \sum_{k=1}^{\rank_\alpha(v)} ((P_{\beta}\otimes \id_{\alpha\setminus \beta})v^\alpha_k) \otimes w^{\alpha^c}_k. $
If  $\beta \subset D\setminus \alpha$, then $\Pc_{\beta} = \id_\alpha \otimes (P_{\beta} \otimes \id_{D\setminus \{\alpha \cup \beta\}})$ and
$
\Pc_{\beta} v = \sum_{k=1}^{\rank_\alpha(v)} v^\alpha_k \otimes ((P_{\beta}\otimes \id_{D\setminus \{\alpha \cup \beta\}})w^{\alpha^c}_k). 
$
The result follows from the definition of the $\alpha$-rank. 
\end{proof}
If $P_{U_\alpha}$ is the orthogonal projection from $\Hc_\alpha$ onto $U_\alpha$, then  
$P_{U_\alpha}\otimes id_{\alpha^c}$ coincides with the orthogonal projection $P_{U_\alpha\otimes \Hc_{\alpha^c}}$ from $\Hc_\alpha\otimes \Hc_{\alpha^c}$ onto $U_\alpha \otimes \Hc_{\alpha^c}$, and is identified with the orthogonal projection  $\Pc_{U_\alpha} = P_{U_\alpha}\otimes id_{\alpha^c}$ in $\Lc(\Hc)$. 
If $P_{U_\alpha}^{W_\alpha}$ is the oblique projection onto $U_\alpha$ along $W_\alpha \subset \Hc_\alpha^*$, then $\Pc_{U_\alpha}^{W_\alpha}:=P_{U_\alpha}^{W_\alpha}\otimes id_{\alpha^c}$ is the oblique projection  from $\Hc_\alpha\otimes \Hc_{\alpha^c}$ onto $U_\alpha \otimes \Hc_{\alpha^c}$ along $W_\alpha \otimes \Hc_{\alpha^c}'$. If $W_\alpha \subset \Hc_\alpha'$, then  $P_{U_\alpha}^{W_\alpha}$ and $\Pc_{U_\alpha}^{W_\alpha}$ are continuous operators with equal norms $\Vert \Pc_{U_\alpha}^{W_\alpha} \Vert = \Vert P_{U_\alpha}^{W_\alpha} \Vert_{\Hc_\alpha}.$
 
\begin{proposition}\label{prop:oblique_projections_norms}
Let $U_\alpha$ be a {finite-dimensional} subspace of $\Hc_\alpha$ and let $P_{U_\alpha}^{W_\alpha}$ be the  projection onto $U_\alpha$ along  $W_\alpha$. For a tensor $v\in \Hc$ such that $W_\alpha \subset U^{min}_\alpha(v)'$, $\Pc^{W_\alpha}_{U_\alpha} v$ is an element of $U_\alpha \otimes \Hc_{\alpha^c}$ such that 
 $$
 \Vert \Pc^{W_\alpha}_{U_\alpha} v \Vert \le \Vert P^{W_\alpha}_{U_\alpha} \Vert_{U^{min}_\alpha(v) \to \Hc_\alpha } \Vert v \Vert,
 $$
 and
 $$
  \Vert \Pc^{W_\alpha}_{U_\alpha} v - \Pc_{U_\alpha} v \Vert \le \Vert P^{W_\alpha}_{U_\alpha} - P_{U_\alpha} \Vert_{U^{min}_\alpha(v) \to \Hc_\alpha } \Vert v \Vert,
 $$
with 
$$\Vert P^{W_\alpha}_{U_\alpha} - P_{U_\alpha} \Vert_{U^{min}_\alpha(v) \to \Hc_\alpha } = \Vert P^{W_\alpha}_{U_\alpha}  \Vert_{(id_\alpha - P_{U_\alpha}) U^{min}_\alpha(v) \to \Hc_\alpha } \le \Vert P^{W_\alpha}_{U_\alpha} \Vert_{U^{min}_\alpha(v) \to \Hc_\alpha }.$$
Also,  
 $$ \Vert v - \Pc^{W_\alpha}_{U_\alpha} v \Vert^2 \le ({1+\Vert P^{W_\alpha}_{U_\alpha} - P_{U_\alpha} \Vert^2_{U^{min}_\alpha(v)\to \Hc_\alpha}}) \Vert v - \Pc_{U_\alpha} v\Vert^2.
 $$
\end{proposition}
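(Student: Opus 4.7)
The plan is to lift Proposition \ref{prop:interpolation_error} (which deals with scalar-level oblique vs.\ orthogonal projections) to the tensor level via the norm-preserving correspondence of Proposition \ref{prop:Aalpha_correspondance}. The key observation is that, because $v \in U^{min}_\alpha(v) \otimes \Hc_{\alpha^c}$, we only need to control $P^{W_\alpha}_{U_\alpha}$ as an operator on $U^{min}_\alpha(v)$. The hypothesis $W_\alpha \subset U^{min}_\alpha(v)'$ together with the earlier continuity result for oblique projections guarantees that $P^{W_\alpha}_{U_\alpha}$ restricts to a continuous operator from $U^{min}_\alpha(v)$ to $\Hc_\alpha$, and similarly for $P^{W_\alpha}_{U_\alpha} - P_{U_\alpha}$ since $P_{U_\alpha}$ is bounded on all of $\Hc_\alpha$.

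Given that, the first inequality follows directly from Corollary \ref{lem:operator_continuous_Umin} applied to $A_\alpha = P^{W_\alpha}_{U_\alpha}|_{U^{min}_\alpha(v)}$, and the second from the same corollary applied to $A_\alpha = (P^{W_\alpha}_{U_\alpha} - P_{U_\alpha})|_{U^{min}_\alpha(v)}$. For the equality $\Vert P^{W_\alpha}_{U_\alpha} - P_{U_\alpha} \Vert_{U^{min}_\alpha(v)\to \Hc_\alpha} = \Vert P^{W_\alpha}_{U_\alpha} \Vert_{(id_\alpha - P_{U_\alpha}) U^{min}_\alpha(v) \to \Hc_\alpha}$, I would reuse the algebraic identity exploited in the proof of Proposition \ref{prop:interpolation_error}: since both projections fix $U_\alpha$, we have $(P^{W_\alpha}_{U_\alpha} - P_{U_\alpha})u = P^{W_\alpha}_{U_\alpha}(u - P_{U_\alpha} u) = (P^{W_\alpha}_{U_\alpha} - P_{U_\alpha})(u - P_{U_\alpha} u)$ for every $u \in U^{min}_\alpha(v)$, and the map $u \mapsto u - P_{U_\alpha} u$ is a norm-nonincreasing bijection from $U^{min}_\alpha(v)$ onto $(id_\alpha - P_{U_\alpha})U^{min}_\alpha(v)$ (nonincreasing because $P_{U_\alpha}$ is orthogonal; surjective by definition).

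For the final Pythagorean bound, I would first note that $\Pc_{U_\alpha}$ is the orthogonal projection onto $U_\alpha \otimes \Hc_{\alpha^c}$, so $v - \Pc_{U_\alpha} v$ is orthogonal to $U_\alpha \otimes \Hc_{\alpha^c}$, and in particular orthogonal to $\Pc_{U_\alpha} v - \Pc^{W_\alpha}_{U_\alpha} v \in U_\alpha \otimes \Hc_{\alpha^c}$. Hence
\begin{equation*}
\Vert v - \Pc^{W_\alpha}_{U_\alpha} v \Vert^2 = \Vert v - \Pc_{U_\alpha} v \Vert^2 + \Vert \Pc_{U_\alpha} v - \Pc^{W_\alpha}_{U_\alpha} v \Vert^2.
\end{equation*}
Applying the tensor-level version of the identity above, $\Pc_{U_\alpha} v - \Pc^{W_\alpha}_{U_\alpha} v = -\Pc^{W_\alpha}_{U_\alpha}(v - \Pc_{U_\alpha} v)$, and using that $v - \Pc_{U_\alpha} v \in ((id_\alpha - P_{U_\alpha})U^{min}_\alpha(v))\otimes \Hc_{\alpha^c}$, Proposition \ref{prop:Aalpha_correspondance} together with the equality just proved yields $\Vert \Pc_{U_\alpha} v - \Pc^{W_\alpha}_{U_\alpha} v \Vert \le \Vert P^{W_\alpha}_{U_\alpha} - P_{U_\alpha} \Vert_{U^{min}_\alpha(v) \to \Hc_\alpha} \Vert v - \Pc_{U_\alpha} v \Vert$, and the claimed inequality follows.

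No single step is a genuine obstacle; the only point that requires care is checking that the correct ambient subspaces $U^{min}_\alpha(v)$ versus $(id_\alpha - P_{U_\alpha}) U^{min}_\alpha(v)$ are used when invoking Proposition \ref{prop:Aalpha_correspondance}, so that the operator norms on $\Hc_\alpha$ and those on $\Hc$ match exactly rather than only up to an inequality.
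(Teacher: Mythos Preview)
Your approach is correct and essentially identical to the paper's: you lift Proposition \ref{prop:interpolation_error} to the tensor level via Proposition \ref{prop:Aalpha_correspondance} (equivalently Corollary \ref{lem:operator_continuous_Umin}), using that $v \in U^{min}_\alpha(v)\otimes \Hc_{\alpha^c}$, exactly as the paper does in its two-line proof; you simply spell out the Pythagorean step and the algebraic identity $(P^{W_\alpha}_{U_\alpha}-P_{U_\alpha})u = P^{W_\alpha}_{U_\alpha}(id_\alpha-P_{U_\alpha})u$ that the paper leaves implicit. One minor slip: the map $u\mapsto u-P_{U_\alpha}u$ from $U^{min}_\alpha(v)$ onto $(id_\alpha-P_{U_\alpha})U^{min}_\alpha(v)$ is a norm-nonincreasing \emph{surjection}, not a bijection (its kernel is $U_\alpha\cap U^{min}_\alpha(v)$), but surjectivity and norm-nonincreasingness are all you actually use.
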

\begin{proof}
We have $v \in U^{min}_\alpha(v)\otimes \Hc_{\alpha^c}$.
% and $v - \Pc_{U_\alpha} v \in U^\star_\alpha(v) \otimes \Hc_{\alpha^c}$. 
Noting that $\Vert \Pc^{W_\alpha}_{U_\alpha} \Vert_{U^{min}_\alpha(v)\otimes \Hc_{\alpha^c} \to \Hc} = \Vert P^{W_\alpha}_{U_\alpha} \Vert_{U^{min}_\alpha(v) \to \Hc_\alpha}$ and $\Vert \Pc^{W_\alpha}_{U_\alpha} - \Pc_{U_\alpha} \Vert_{U^{min}_\alpha(v)\otimes \Hc_{\alpha^c} \to \Hc} = \Vert P^{W_\alpha}_{U_\alpha} - P_{U_\alpha} \Vert_{U^{min}_\alpha(v) \to \Hc_\alpha}$, the results directly follow from Proposition \ref{prop:interpolation_error}.
\end{proof}
 Now, let $\alpha$ be a non-empty subset of $D$ and let $S(\alpha)$ be a partition of $\alpha$. Let $P_{U_\beta}^{W_\beta}$ be oblique projections onto subspaces $U_\beta$ of $\Hc_\beta$ along $W_\beta\subset \Hc_\beta^*$, $\beta \in S(\alpha)$. Then $  \bigotimes_{\beta \in S(\alpha)} P_{U_\beta}^{W_\beta} := P_{U_{S(\alpha)}}^{W_{S(\alpha)}}$ is the oblique projection from $\Hc_{S(\alpha)} =  \bigotimes_{\beta \in S(\alpha)} \Hc_\beta$ onto $  \bigotimes_{\beta \in S(\alpha)} U_\beta := U_{S(\alpha)}$ along $  \bigotimes_{\beta \in S(\alpha)} W_\beta := W_{S(\alpha)}$, and $\Pc^{W_{S(\alpha)}}_{U_{S(\alpha)}} = P_{U_{S(\alpha)}}^{W_{S(\alpha)}} \otimes id_{  \alpha^c}$ is the oblique projection from $\Hc_{\alpha} \otimes \Hc_{\alpha^c}$ to $U_{S(\alpha)} \otimes \Hc_{\alpha^c}$ along $W_{S(\alpha)}\otimes \Hc_{\alpha^c}'$. From Proposition \ref{prop:properties_projections}, we directly obtain the following result. 
 \begin{proposition}\label{oblique_projections_tensors_commute}
 If $  U_\alpha \subset   \bigotimes_{\beta \in S(\alpha)} U_\beta  $ and $W_\alpha \subset    \bigotimes_{\beta \in S(\alpha)} W_\beta$, then 
$$
 \Pc_{U_\alpha}^{W_\alpha} (\prod_{\beta \in S(\alpha)} \Pc_{U_\beta}^{W_\beta}) = (
\prod_{\beta \in S(\alpha)} \Pc_{U_\beta}^{W_\beta})\Pc_{U_\alpha}^{W_\alpha}  = \Pc_{U_\alpha}^{W_\alpha} .
$$
 %the  
 % oblique projection $\Pc_{U_\alpha}^{W_\alpha}$ from onto $U_\alpha \otimes \Hc_{\alpha^c}$  %along $W_\alpha \otimes \Hc_{\alpha^c}'$ is such that 
%$$
%P_U^W (\bigotimes_{\alpha \in S} P_{U_\alpha}^{W_\alpha}) = (\bigotimes_{\alpha \in S} P_{U_\alpha}^{W_\alpha}) P_U^W=P_U^W.
%$$
 \end{proposition}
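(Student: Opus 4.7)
The plan is to apply Proposition \ref{prop:properties_projections} directly once the two operators have been cast as projections onto nested subspaces along nested complements. The relevant identifications come straight from the discussion preceding the statement.

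First I would identify $\Pc_{U_\alpha}^{W_\alpha}$ as the oblique projection onto the subspace $U_\alpha \otimes \Hc_{\alpha^c}$ of $\Hc$ along $W_\alpha \otimes \Hc_{\alpha^c}'$, and the product $\prod_{\beta\in S(\alpha)} \Pc_{U_\beta}^{W_\beta}$ as the oblique projection $\Pc^{W_{S(\alpha)}}_{U_{S(\alpha)}}$ onto $U_{S(\alpha)}\otimes \Hc_{\alpha^c} = (\bigotimes_{\beta\in S(\alpha)} U_\beta)\otimes \Hc_{\alpha^c}$ along $W_{S(\alpha)}\otimes \Hc_{\alpha^c}' = (\bigotimes_{\beta\in S(\alpha)} W_\beta)\otimes \Hc_{\alpha^c}'$. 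Both identifications are exactly what was recorded in the paragraph introducing $\Pc^{W_{S(\alpha)}}_{U_{S(\alpha)}}$.

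Next I would verify the two inclusion hypotheses required by Proposition \ref{prop:properties_projections}. The hypothesis $U_\alpha \subset \bigotimes_{\beta\in S(\alpha)} U_\beta$ immediately tensorizes to $U_\alpha \otimes \Hc_{\alpha^c} \subset U_{S(\alpha)}\otimes \Hc_{\alpha^c}$, giving the inclusion of ranges. Similarly $W_\alpha \subset \bigotimes_{\beta\in S(\alpha)} W_\beta$ yields $W_\alpha \otimes \Hc_{\alpha^c}' \subset W_{S(\alpha)}\otimes \Hc_{\alpha^c}'$, giving the inclusion of the spaces along which the projections are taken.

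With these two inclusions in hand, Proposition \ref{prop:properties_projections} applies with $P = \Pc_{U_\alpha}^{W_\alpha}$ and $\tilde P = \prod_{\beta\in S(\alpha)} \Pc_{U_\beta}^{W_\beta}$, and delivers both $\tilde P P = P$ and $P\tilde P = P$, which is the claim. I do not anticipate any real obstacle: the only thing to be careful about is that the product of the $\Pc_{U_\beta}^{W_\beta}$'s is indeed the oblique projection whose range and annihilated complement are the tensor products of the individual ones, but this has already been stated right before the proposition, so the proof reduces to a one-line invocation of Proposition \ref{prop:properties_projections} after the two inclusions are noted.
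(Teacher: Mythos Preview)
Your proposal is correct and matches the paper's approach exactly: the paper states that the result is obtained directly from Proposition \ref{prop:properties_projections}, and your write-up simply makes explicit the identifications of ranges and complements (already recorded in the preceding paragraph) needed to invoke it.
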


\section{Tree-based tensor formats}\label{sec:tree-based-formats}

Let $T \subset 2^D\setminus \emptyset$ be a dimension partition tree over $D$, with root $D$. The elements of $T$ are called the nodes of the tree. Every node $\alpha \in T$ with $\#\alpha \ge 2$ has a set of sons $S(\alpha)$ which form a partition of $\alpha$, i.e. $\bigcup_{\beta \in S(\alpha)} \beta = \alpha$. A node $\alpha \in T$ with $\#\alpha =1$ is such that $S(\alpha)=\emptyset $ and is called a leaf of the tree. The set of leaves of $T$ is denoted $\Lc(T)$ (see an example on Figure \ref{fig:tree}). 
\begin{figure}[h] \centering \scriptsize
 \begin{tikzpicture}[scale=.5]
\tikzstyle{level 1}=[sibling distance=60mm]
\tikzstyle{level 2}=[sibling distance=20mm]
\tikzstyle{root}=[circle,draw,thick,fill=white]
\tikzstyle{interior}=[circle,draw,solid,thick,fill=gray!20]
\tikzstyle{leaves}=[circle,draw,solid,thick,fill=gray!20]
\tikzstyle{active}=[circle,draw,solid,thick,fill=red]
\tikzstyle{inactive}=[circle,draw,solid,thick,fill=white]

\node [root,label=above:{$\{1,2,3,4,5,6\}$}, 
fill=white]  {}
  child {node [interior,label=above left:{$\{1,2,3\}$},
  fill=white]  {}
    child { node [leaves,label=below:{$\{1\}$},
    fill=blue] {}} 
    child { node [interior,label=above right:{$\{2,3\}$},
    fill=white] {}
    child { node [leaves,label=below:{$\{2\}$},
    fill=blue] {}}
    child { node [leaves,label=below:{$\{3\}$},
    fill=blue] {}}   }      }
  child {node [interior,label=above right:{$\{4,5,6\}$},
  fill=white]  {}
   child { node [leaves,label=below:{$\{4\}$},
   fill=blue] {}}
    child { node [leaves,label=below:{$\{5\}$},
    fill=blue] {}}  
    child { node [leaves,label=below:{$\{6\}$},
    fill=blue] {}}  
   }
       ;
\end{tikzpicture}
\caption{A dimension partition tree $T$ over $D=\{1,2,3,4,5,6\}$ and its leaves (blue nodes).}\label{fig:tree}
\end{figure}
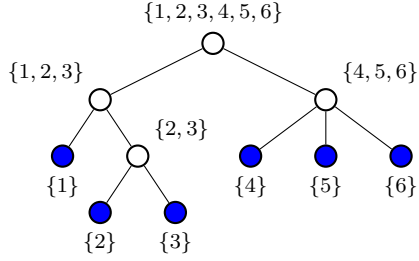
For $\alpha\in T$, we denote by $\level(\alpha)$ the level of $\alpha$ in $T$, such that $\level(D)=0$ and $\level(\beta) = \level(\alpha)+1$ if $\beta \in S(\alpha)$.
We let $L = \depth(T)= \max_{\alpha\in T} \level(\alpha)$ be the depth of $T$, which is the maximum level of the nodes in $T$, and 
 $T_{\ell} = \{\alpha\in T : \level(\alpha)=\ell\}$ be the subset of nodes with level $\ell$, $0\le \ell\le L$. 
We let
$
t_\ell = \bigcup_{\alpha \in T_{\ell}} \alpha.
$
We have $t_{\ell+1} \subset t_{\ell} $ and 
$
t_{\ell} \setminus t_{\ell+1} \subset \Lc(T)
$ (see example on Figure \ref{fig:tree-levels}).
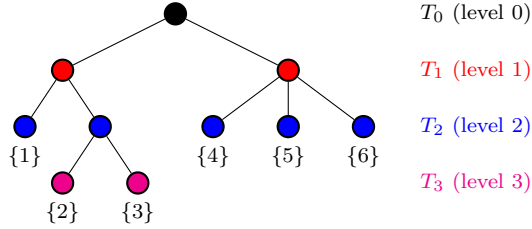
\begin{figure}[h]  \centering \scriptsize
  \begin{tikzpicture}[scale=.5]
\tikzstyle{level 1}=[sibling distance=60mm]
\tikzstyle{level 2}=[sibling distance=20mm]
\tikzstyle{root}=[circle,draw,thick,fill=white]
\tikzstyle{interior}=[circle,draw,solid,thick,fill=gray!20]
\tikzstyle{leaves}=[circle,draw,solid,thick,fill=gray!20]
\tikzstyle{active}=[circle,draw,solid,thick,fill=red]
\tikzstyle{inactive}=[circle,draw,solid,thick,fill=white]

\node [root,%label=above:{$\{1,2,3,4,5,6\}$}, 
,fill=black]  {}
  child {node [interior,%label=above left:{$\{1,2,3\}$},
  fill=red]  {}
    child { node [leaves,label=below:{$\{1\}$},
    fill=blue] {}} 
    child { node [interior,%label=above right:{$\{2,3\}$},
    fill=blue] {}
    child { node [leaves,label=below:{$\{2\}$},
    fill=magenta] {}}
    child { node [leaves,label=below:{$\{3\}$},
    fill=magenta] {}}   }      }
  child {node [interior,%label=above right:{$\{4,5,6\}$},
  fill=red]  {}
   child { node [leaves,label=below:{$\{4\}$},
   fill=blue] {}}
    child { node [leaves,label=below:{$\{5\}$},
    fill=blue] {}}  
    child { node [leaves,label=below:{$\{6\}$},
    fill=blue] {}}  
   }
       ;
       \node [] at (8,0) {\textcolor{black}{$T_{0}$ (level $0$)}};
       \node [] at (8,-1.5) {\textcolor{red}{$T_{1}$ (level $1$)}};
       \node [] at (8,-3.) {\textcolor{blue}{$T_{2}$ (level $2$)}};
              \node [] at (8,-4.5) {\textcolor{magenta}{$T_{3}$ (level $3$)}};
\end{tikzpicture}

\caption{A dimension partition tree $T$ over $D=\{1,\hdots,6\}$ with depth $L=3$ and the corresponding subsets $T_{\ell}$, $0\le \ell \le L$. Here $t_3 = \{2,3\}$ and $t_2=t_1 =t_0= D$.}\label{fig:tree-levels}
\end{figure}

We introduce a subset of active nodes $A \subset T \setminus \{D\}$ such that
$T \setminus A \subset \{D\}\cup  \Lc(T) $, which means that the set of non active nodes in $T \setminus \{D\}$ is a subset of the leaves (see Figure \ref{fig:tree-active}). A set $A$ is admissible if for any $\alpha \in A$, the parent node of $\alpha$ is in $A\cup \{D\}$. 
We let $\Lc(A) = A \cap \Lc(T)$, $A_\ell = A \cap T_{\ell}$ for $1\le \ell \le L$, and $a_\ell = \cup_{\alpha \in A_\ell} \alpha$.  
We define the $A$-rank of a tensor $v\in \Hc$ as the tuple  $\rank_A(v) = \{\rank_\alpha(v)\}_{\alpha\in A}$. 

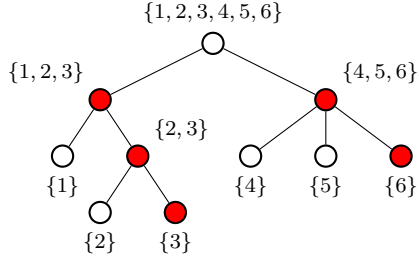
\begin{figure}[h] \centering \scriptsize
 \begin{tikzpicture}[scale=.5]
\tikzstyle{level 1}=[sibling distance=60mm]
\tikzstyle{level 2}=[sibling distance=20mm]
\tikzstyle{root}=[circle,draw,thick,fill=white]
\tikzstyle{interior}=[circle,draw,solid,thick,fill=gray!20]
\tikzstyle{leaves}=[circle,draw,solid,thick,fill=gray!20]
\tikzstyle{active}=[circle,draw,solid,thick,fill=red]
\tikzstyle{inactive}=[circle,draw,solid,thick,fill=white]

\node [root,label=above:{$\{1,2,3,4,5,6\}$}, 
fill=white]  {}
  child {node [interior,label=above left:{$\{1,2,3\}$},
  fill=red]  {}
    child { node [leaves,label=below:{$\{1\}$},
    fill=white] {}} 
    child { node [interior,label=above right:{$\{2,3\}$},
    fill=red] {}
    child { node [leaves,label=below:{$\{2\}$},
    fill=white] {}}
    child { node [leaves,label=below:{$\{3\}$},
    fill=red] {}}   }      }
  child {node [interior,label=above right:{$\{4,5,6\}$},
  fill=red]  {}
   child { node [leaves,label=below:{$\{4\}$},
   fill=white] {}}
    child { node [leaves,label=below:{$\{5\}$},
    fill=white] {}}  
    child { node [leaves,label=below:{$\{6\}$},
    fill=red] {}}  
   }
       ;
\end{tikzpicture}
\caption{A dimension partition tree $T$ over $D=\{1,2,3,4,5,6\}$ and an admissible subset of active nodes $A$ (red nodes).}\label{fig:tree-active}
\end{figure}

Now we consider a tensor $v\in \Hc$ with $\rank_A(v) =(r_\alpha)_{\alpha \in A}$. We let $r_D = \rank_D(v) =1$. 
For all $\alpha \in A \cup \{D\}$, we denote  by $\{v^\alpha_{k_\alpha}\}_{k_\alpha=1}^{r_\alpha}$ a basis of the minimal subspace $U^{min}_\alpha(v)\subset \Hc_\alpha$, and we let $v^D_1=v$. 
For $\alpha \in A \cup \{D\}$ such that $\emptyset \neq S(\alpha)\subset A$, since  
$U^{min}_\alpha(v) \subset \bigotimes_{\beta \in S(\alpha)} U^{min}_\beta(v)$, the tensor $v^{\alpha}_{k_\alpha}$  admits a representation 
$$
v^\alpha_{k_\alpha}(x_\alpha) = \sum_{\substack{1\le k_\beta \le r_\beta \\\beta \in S(\alpha)}} C^\alpha_{k_\alpha,(k_\beta)_{\beta\in S(\alpha)}} \prod_{\beta\in S(\alpha)} v^\beta_{k_\beta}(x_\beta),
$$
with a tensor of coefficients $C^\alpha \in \Rbb^{r_\alpha \times \bigtimes_{\beta\in S(\alpha)} r_\beta}$. 
For $\alpha \in A \cup \{D\}$ such that $\emptyset \neq S(\alpha)\not\subset A$, we have  
$U^{min}_\alpha(v) \subset (\bigotimes_{\beta \in S(\alpha)\cap A} U^{min}_\beta(v))\otimes (\bigotimes_{\beta \in S(\alpha)\setminus A}\Hc_\beta)$, and therefore the tensor $v^{\alpha}_{k_\alpha} $  admits a representation 
$$
v^\alpha_{k_\alpha}(x_\alpha) = \sum_{\substack{1\le k_\beta \le r_\beta \\\beta \in S(\alpha)\cap A}} C^\alpha_{k_\alpha,(k_\beta)_{\beta\in S(\alpha)\cap A}}((x_{\beta})_{\beta\in S(\alpha)\setminus A}) \prod_{\beta\in S(\alpha) \cap A} v^\beta_{k_\beta}(x_\beta),
$$
with $C^\alpha \in \Rbb^{r_\alpha \times \bigtimes_{\beta\in S(\alpha)\cap A} r_\beta} \otimes (\bigotimes_{\beta \in S(\alpha)\setminus A}\Hc_\beta).$
Finally, a tensor $v$ such that $\rank_A(v) = (r_\alpha)_{\alpha \in A}$ admits a representation
\begin{align}
v = \sum_{\substack{1\le k_\alpha \le r_\alpha \\ \alpha\in A \cup \{D\}  }} %C^D_{1,(k_\beta)_{\beta \in S(D)\cap A}}((x_{\beta})_{\beta\in S(D)\setminus A})
\prod_{\alpha \in (A \cup \{D\} )\setminus \Lc(A)} C^\alpha_{k_\alpha,(k_\beta)_{\beta \in S(\alpha)\cap A}}((x_{\beta})_{\beta\in S(\alpha)\setminus A})  \prod_{\alpha \in \Lc({A})} v^{\alpha}_{k_\alpha}(x_\alpha)\label{representation-TrA}
\end{align}
For a tuple $r=(r_\alpha)_{\alpha\in A}$, we define the subset $\Tc^A_r(\Hc)$ of tensors in $\Hc$ with $A$-rank bounded by $r$,
$$
\Tc^A_r(\Hc) = \{v \in \Hc : \rank_\alpha(v) \le r_\alpha, \alpha\in A\} = \bigcap_{\alpha \in A} \Tc^{\{\alpha\}}_{r_\alpha}(\Hc).
$$
{\begin{remark}
A tensor $v \in  \Tc^A_r(\Hc)$ admits a representation as a composition of functions. 
For $\alpha \in A$,  let
 $v^\alpha(x_\alpha) = (v_{1}^\alpha,\hdots,v_{r_\alpha}^\alpha) \in \Rbb^{r_\alpha}$. If $\emptyset \neq S(\alpha) \subset A$, the tensor $C^\alpha $ can be identified with a multilinear function $f^\alpha : \bigtimes_{\beta \in S(\alpha)}\Rbb^{r_\beta} \to \Rbb^{r_\alpha}$, and $v^\alpha(x_\alpha)$ admits the representation 
 $$
 v^\alpha(x_\alpha) = f^\alpha((v^{\beta}(x_\beta))_{\beta \in S(\alpha)}).
 $$
 For $\alpha \in A\cup \{D\}$ such that $\emptyset \neq S(\alpha)\not\subset A$, the tensor  $C^\alpha((x_{\beta})_{\beta\in S(\alpha)\setminus A})$ can be identified with a multilinear function $f^\alpha(\cdot , (x_{\beta})_{\beta\in S(\alpha)\setminus A}) :  \bigtimes_{\beta \in S(\alpha)\cap A}\Rbb^{r_\beta}  \to \Rbb^{r_\alpha}$, and 
 $v^\alpha(x_\alpha)$ admits the representation 
 $$
 v^\alpha(x_\alpha) = f^\alpha((v^{\beta}(x_\beta))_{\beta \in S(\alpha)\cap A} ,  (x_{\beta})_{\beta\in S(\alpha)\setminus A}),
 $$
 where the $f^\alpha$ is linear in the arguments associated with active nodes $\beta \in S(\alpha)\cap A$. 
As an example, for the case of Figure \ref{fig:tree-active}, the tensor $v$ admits the representation
 $$
 v(x)=f^{1,2,3,4,5,6}(f^{1,2,3}(x_1,f^{1,2}(x_2,v^{3}(x_3))),f^{4,5,6}(x_4,x_5,v^{6}(x_6))).
$$ 
 \end{remark}
}

\begin{proposition}\label{storage_complexity_TrA}
Let $V=V_1\otimes \hdots \otimes V_d \subset \Hc$, with $V_\nu$ a subspace of $\Hc_\nu$ with dimension  $\dim(V_\nu)=n_\nu$,  $1\le \nu\le d$. 
The storage complexity of a tensor in $\Tc_{r}^A(\Hc) \cap V = \Tc_r^A(V)$ 
is 
$$
\mathrm{storage}(\Tc^A_r(V)) = \sum_{\alpha \in (A\cup \{D\})\setminus \Lc(A)} r_\alpha \prod_{\beta \in S(\alpha)\cap A} r_\beta \prod_{\beta\in S(\alpha)\setminus A} n_\beta + \sum_{\alpha \in \Lc(A)} r_\alpha n_\alpha.
$$
\end{proposition}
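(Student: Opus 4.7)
The plan is to simply read off the parameter count from the explicit representation \eqref{representation-TrA} of tensors in $\Tc_r^A(\Hc)$, restricted to the subspace $V$. The core observation is that the representation decomposes $v$ into (i) basis functions $\{v^\alpha_{k_\alpha}\}$ at the active leaves $\alpha \in \Lc(A)$ and (ii) coefficient tensors $C^\alpha$ at the interior active nodes and the root $\alpha \in (A \cup \{D\}) \setminus \Lc(A)$. Since these two sets are disjoint, it suffices to count parameters separately for each group and add them up.

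For the leaf contribution, I would argue that when $v \in \Tc_r^A(V)$, each minimal subspace $U^{min}_\alpha(v)$ with $\alpha \in \Lc(A)$ is a subspace of $V_\alpha$ of dimension at most $r_\alpha$, so its basis is stored as $r_\alpha$ vectors of length $n_\alpha = \dim(V_\alpha)$, contributing exactly $r_\alpha n_\alpha$ parameters per active leaf and producing the second sum in the formula. For the interior/root contribution, I would distinguish two cases. If $S(\alpha) \subset A$, then $C^\alpha \in \Rbb^{r_\alpha \times \bigtimes_{\beta \in S(\alpha)} r_\beta}$ requires $r_\alpha \prod_{\beta \in S(\alpha)} r_\beta$ parameters, which matches the summand $r_\alpha \prod_{\beta \in S(\alpha) \cap A} r_\beta \prod_{\beta \in S(\alpha) \setminus A} n_\beta$ under the empty-product convention. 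If instead $S(\alpha) \not\subset A$, then, because $v \in V$, each slice of $C^\alpha$ indexed by $(k_\alpha, (k_\beta)_{\beta \in S(\alpha) \cap A})$ lies in $\bigotimes_{\beta \in S(\alpha) \setminus A} V_\beta$, a space of dimension $\prod_{\beta \in S(\alpha)\setminus A} n_\beta$, giving exactly the announced summand.

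Summing these two contributions over their respective index sets yields the formula. The main obstacle is essentially bookkeeping: I need admissibility of $A$ to ensure representation \eqref{representation-TrA} is well defined (so that when $\alpha$ is an active interior node, its sons are either all active or a consistent mix of active and non-active leaves), and I must treat the root $D$ uniformly with the other interior active nodes by using $r_D = 1$ and interpreting $(A \cup \{D\}) \setminus \Lc(A)$ correctly in the edge case $D \in A$.
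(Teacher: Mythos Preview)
Your proposal is correct and follows exactly the approach implicit in the paper: the proposition is stated there without proof because the storage count is read off directly from the representation \eqref{representation-TrA}, splitting contributions between active leaves (basis vectors in $V_\alpha$) and the remaining nodes (coefficient tensors $C^\alpha$), precisely as you do. One harmless inaccuracy: the ``edge case $D\in A$'' you mention cannot occur, since by definition $A\subset T\setminus\{D\}$.
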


\begin{example}[Tucker format] \label{ex:tucker}
The Tucker format corresponds to a trivial tree $T = \{\{1,\hdots,d\} ,\{1\},\hdots,\{d\}\}$ with depth $L=1$, and 
$A = T\setminus  \{D\}$ (see Figure \ref{fig:tree-tucker}). A tensor $v$ with $A$-rank bounded by $(r_1,\hdots,r_d)$ admits a representation of the form
\begin{align}
v(x) = \sum_{k_1=1}^{r_1}\hdots \sum_{k_d=1}^{r_d} C_{k_1,\hdots,k_d} v^{1}_{k_1}(x_1) \hdots  v^d_{k_d}(x_d),\label{repres-tucker}
\end{align}
where $C \in \Rbb^{r_1\times \hdots \times r_d}$, and $v^\nu_{k_\nu} \in \Hc_\nu$, $1\le \nu \le d$, {or equivalently
$$
v(x) = f^{1,...,d}(v^1(x_1),\hdots,v^d(x_d)).
$$}
   \begin{figure}[h]\centering
  \scriptsize
 \begin{tikzpicture}[scale=0.5]
\tikzstyle{level 1}=[sibling distance=20mm]
\tikzstyle{root}=[circle,draw,thick,fill=white]
\tikzstyle{interior}=[circle,draw,solid,thick,fill=red]
\tikzstyle{leaves}=[circle,draw,solid,thick,fill=red]
\tikzstyle{active}=[circle,draw,solid,thick,fill=red]

\node [root,label=above:{$\{1,2,3,4,5\}$}]  {}
  child {node [interior,label=below:{$\{1\}$}]  {}}
  child {node [interior,label=below:{$\{2\}$}]  {}}
  child {node [interior,label=below:{$\{3\}$}]  {}}
  child {node [interior,label=below:{$\{4\}$}]  {}}
  child {node [interior,label=below:{$\{5\}$}]  {}}       ;
\end{tikzpicture} 
\caption{Tucker format. Dimension partition tree $T$ over $D=\{1,\hdots,5\}$ and subset of active nodes $A$ (red nodes).}\label{fig:tree-tucker}
\end{figure}
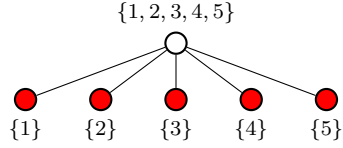
\end{example}
\begin{example}[Degenerate Tucker format]\label{ex:degenerate-tucker}
A degenerate Tucker format corresponds to a trivial tree $T = \{\{1,\hdots,d\} ,\{1\},\hdots,\{d\}\}$ with depth $L=1$, and 
an active set of nodes $A$ strictly included in $ T\setminus  \{D\}$. Up to a permutation of dimensions, this corresponds to  $A = \{\{1\},\hdots,\{p\}\}$, with $p<d$.  A tensor $v$ with $A$-rank bounded by $(r_1,\hdots,r_p)$ admits a representation of the form
\begin{align}
v(x) = \sum_{k_1=1}^{r_1}\hdots \sum_{k_p=1}^{r_p} C_{k_1,\hdots,k_p}(x_{p+1},\hdots,x_d) v^{1}_{k_1}(x_1) \hdots  v^{{p}}_{k_p}(x_p),\label{repres-tucker-degenerate}
\end{align}
where $C \in \Rbb^{r_1\times \hdots \times r_p} \otimes \Hc_{\{p+1,\hdots,d\}}$, and $v^\nu_{k_\nu} \in \Hc_\nu$, $1\le \nu \le p$, {or equivalently
$$
v(x) = f^{1,\hdots,d}(v^1(x_1),\hdots,v^{p}(x_p),x_{p+1},\hdots,x_d).
$$}
\end{example}
\begin{example}[Tensor train format]\label{ex:tt}
The tensor train (TT) format corresponds to a linear tree $T=\{\{1\},\{2\},\hdots,\{d\},\{1,2\},\hdots,\{1,\hdots,d\}\}$ and \\$A = \{\{1\},\{1,2\},\hdots,\{1,\hdots,d-1\}\}$ (see Figure \ref{fig:tree-tt}). Here, $A$ is a strict subset of $T\setminus \{D\}$. The nodes $\{2\},\hdots,\{d\}$ in $T$ are not active\footnote{Note that since $\rank_{\{d\}}(v) = \rank_{\{1,\hdots,d-1\}}(v)$, adding the node $\{d\}$ in the set of active nodes  $A$ would yield an equivalent tensor format.}. 
 A tensor $v$ with $A$-rank bounded by $(r_1,\hdots,r_{d-1})$ admits a representation of the form
$$
v(x) = \sum_{k_1=1}^{r_1}\hdots \sum_{k_{d-1}=1}^{r_{d-1}} v^{1}_{k_1}(x_1) C^{2}_{k_1,k_2}(x_2)  \hdots  C^{d-1}_{k_{d-2},k_{d-1}}(x_{d-1}) C^d_{k_{d-1},1}(x_d),
$$
where $v^1\in \Rbb^{r_1}\otimes \Hc_1$, $C^{\nu} \in \Rbb^{r_{\nu-1} \times r_\nu}  \otimes \Hc_\nu$ for $2\le \nu \le d$, with the convention $r_d=1$.
Here $L = d-1$, and for $1\le \ell\le L$, $T_{\ell} = \{\{1,\hdots,d-\ell\},\{d-\ell+1\}\}$, $t_\ell = \{1,\hdots,d-\ell+1\}$, $A_\ell = \{\{1,\hdots,d-\ell\}\}$ and $a_\ell = \{1,\hdots,d-\ell\}$. {The tensor $v$ admits the equivalent representation
$$
v(x) = f^{1,\hdots,d}(f^{1,\hdots,d-1}(...f^{1,2}(v^1(x_1),x_2)...,x_{d-1}),x_d).
$$ }
\begin{figure}[h]\centering\scriptsize
\begin{tikzpicture}[scale=0.5]
\tikzstyle{level 1}=[sibling distance=20mm]
\tikzstyle{level 2}=[sibling distance=20mm]
\tikzstyle{root}=[circle,draw,thick,fill=white]
\tikzstyle{interior}=[circle,draw,solid,thick,fill=red]
\tikzstyle{leaves}=[circle,draw,solid,thick,fill=red]
\tikzstyle{active}=[circle,draw,solid,thick,fill=red]
\tikzstyle{inactive}=[circle,draw,solid,thick,fill=white]

\node [root,label=above:{$\{1,2,3,4,5\}$}]  {}
  child {node [interior,label=above left:{$\{1,2,3,4\}$}]  {}
    child { node [leaves,label=above left:{$\{1,2,3\}$}] {}
      child { node [leaves,label=above left:{$\{1,2\}$}] {}
        child { node [leaves,label=below:{$\{1\}$}] {}}
        child { node [inactive,label=below:{$\{2\}$}] {}}}
      child { node [inactive,label=below:{$\{3\}$}] {}}}
     child { node [inactive,label=below:{$\{4\}$}] {}}}
    child { node [inactive,label=below:{$\{5\}$}] {}}
       ;
\end{tikzpicture}
\caption{Tensor train format. Dimension partition tree $T$ over $D=\{1,\hdots,5\}$ and active nodes $A$ (red nodes).}\label{fig:tree-tt}
\end{figure}
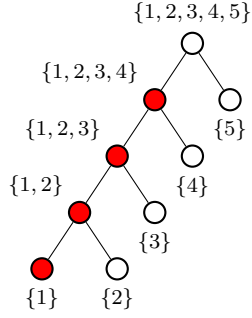
\end{example}

\begin{example}[Tensor train Tucker format]\label{ex:ttt}
The tensor train Tucker (TTT) format corresponds to a linear tree $T= \{\{1\},\hdots,\{d\},\{1,2\},\hdots,\{1,\hdots,d\}\}$ and $A = T\setminus \{D\}$ (see Figure \ref{fig:tree-ttt}). A tensor $v$ having a $A$-rank bounded by $(r_1,\hdots,r_{d},s_{2},\hdots,s_{d-1})$ admits a representation of the form \eqref{repres-tucker}
with a tensor $C \in \Rbb^{r_1\times \hdots \times r_d}$ such that 
$$
C_{k_1,\hdots,k_d} = \sum_{i_2=1}^{s_{2}}\hdots \sum_{i_{d-1}=1}^{s_{d-1}} C^{2}_{k_1,k_2,i_2} C^{3}_{i_2,k_3,i_3} \hdots   C^{d-1}_{i_{d-2},k_{d-1},i_{d-1}} C^d_{i_{d-1},k_{d},1},
$$
where $C^2 \in \Rbb^{r_1\times r_2 \times s_2}$ and $C^{k} \in \Rbb^{s_{k-1}\times r_k \times s_{k}}$ for $3\le k \le d$, with the convention $s_d=1$. Here $L = d-1$, $T_{\ell} = A_\ell = \{\{1,\hdots,d-\ell\},\{d-\ell+1\}\}$ and 
$t_\ell =a_\ell = \{1,\hdots,d-\ell+1\}$ for $1\le \ell\le L$. {The tensor $v$ admits the equivalent representation
$$
v(x) = f^{1,\hdots,d}(f^{1,\hdots,d-1}(...f^{1,2}(v^1(x_1),v^2(x_2))...,v^{d-1}(x_{d-1})),v^d(x_d)).
$$ }
\begin{figure}[h]\centering\scriptsize
\begin{tikzpicture}[scale=0.5]
\tikzstyle{level 1}=[sibling distance=20mm]
\tikzstyle{level 2}=[sibling distance=20mm]
\tikzstyle{root}=[circle,draw,thick,fill=white]
\tikzstyle{interior}=[circle,draw,solid,thick,fill=red]
\tikzstyle{leaves}=[circle,draw,solid,thick,fill=red]
\tikzstyle{active}=[circle,draw,solid,thick,fill=red]
\tikzstyle{inactive}=[circle,draw,solid,thick,fill=white]

\node [root,label=above:{$\{1,2,3,4,5\}$}]  {}
  child {node [interior,label=above left:{$\{1,2,3,4\}$}]  {}
    child { node [leaves,label=above left:{$\{1,2,3\}$}] {}
      child { node [leaves,label=above left:{$\{1,2\}$}] {}
        child { node [leaves,label=below:{$\{1\}$}] {}}
        child { node [leaves,label=below:{$\{2\}$}] {}}}
      child { node [leaves,label=below:{$\{3\}$}] {}}}
     child { node [leaves,label=below:{$\{4\}$}] {}}}
    child { node [leaves,label=below:{$\{5\}$}] {}}
       ;
\end{tikzpicture}
\caption{Tensor train Tucker format. Dimension partition tree $T$ over $D=\{1,\hdots,5\}$ and active nodes $A$ (red nodes).}\label{fig:tree-ttt}
\end{figure}
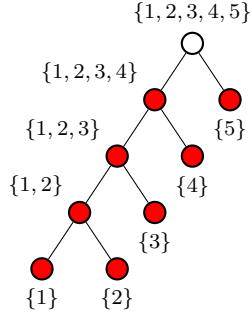
\end{example}

\section{Principal component analysis for tree-based tensor format}\label{sec:pca}

\subsection{Principal component analysis of multivariate functions}\label{sec:pca-order-2}

{Here we introduce the notion of principal component analysis for multivariate functions.} We consider a given non-empty and strict subset $\alpha $ of $D$. Any  tensor in $ \Hc$ is identified {(through the linear isometry $\Mc_\alpha$)} with its $\alpha$-matricisation in $ \Hc_\alpha \otimes \Hc_{\alpha^c}$. A tensor $u$  with $\alpha$-rank 
 $\rank_{\alpha}(u) \in \Nbb\cup \{+\infty\}$  admits a \emph{singular value decomposition} (see \cite[Section 4.4.3]{hackbusch2012book})
\begin{align}
u = \sum_{k=1}^{\rank_{\alpha}(u)} \sigma_k^\alpha u^\alpha_k \otimes u^{\alpha^c}_k,
\end{align}
where $\{u^\alpha_k\}_{k=1}^{\rank_\alpha(u)}$ and $\{u^{\alpha^c}_k\}_{k=1}^{\rank_\alpha(u)}$ are orthonormal vectors in $\Hc_\alpha$ and  $\Hc_{\alpha^c}$ respectively, and where the $\sigma_k^\alpha$ are the \emph{$\alpha$-singular values} of $u$ which are supposed to be arranged by decreasing values. The minimal subspace $U^{min}_\alpha(u)$ of $u$ is given by 
$$U^{min}_\alpha(u) = \overline{\linearspan\{u^\alpha_k\}_{k=1}^{\rank_\alpha(u)}}^{\Vert\cdot\Vert_{\Hc_\alpha}}.$$
For $r_\alpha < \rank_{\alpha}(u)$, the truncated singular value decomposition
$$
u_{r_\alpha} =  \sum_{k=1}^{r_\alpha} \sigma^\alpha_k u^\alpha_k \otimes u^{\alpha^c}_k,
$$
is such that 
$$
\Vert u - u_{r_\alpha} \Vert^2 = \min_{\rank_{\alpha}(v) \le r_\alpha} \Vert u- v \Vert^2 = \sum_{k=r_\alpha+1}^{\rank_\alpha({u})} (\sigma_k^\alpha)^2.
$$
The functions $\{u^\alpha_k\}_{k=1}^{r_\alpha}$ are the $r_\alpha$ principal components of  $u$ associated with dimensions $\alpha$, hereafter called \emph{$\alpha$-principal components}. The corresponding  subspace 
$ U_\alpha^\star = \linearspan\{u^\alpha_k\}_{k=1}^{r_\alpha}$, which is a subspace of $U^{min}_{\alpha}(u)$,  is hereafter called a \emph{$\alpha$-principal subspace} of dimension $r_\alpha$. Denoting $\Pc_{U_\alpha^\star} = P_{U_\alpha^\star}\otimes id_{\alpha^c}$ the orthogonal projection from $\Hc$ to $U_\alpha^\star \otimes \Hc_{\alpha^c}$, we have 
$
u_{r_\alpha} = \Pc_{U_\alpha^\star} u,
$\footnote{{For all $m \ge r_\alpha$, we have 
 $\Pc_{U_\alpha^\star} u_{m} = \sum_{k=1}^{m} \sigma_k^\alpha (P_{U_\alpha^\star} u_k^\alpha) \otimes u_k^{\alpha^c} =   \sum_{k=1}^{r_\alpha} \sigma_k^\alpha   u_k^\alpha \otimes u_k^{\alpha^c}  = u_{r_\alpha}$. Then using the continuity of $\Pc_{U_\alpha^\star}$ and taking the limit with $m$, we obtain $\Pc_{U_\alpha^\star} u = u_{r_\alpha}$.}}  
and 
\begin{align}
\Vert u - \Pc_{U_\alpha^\star} u  \Vert = \min_{\rank_{\alpha}(v) \le r_\alpha} \Vert u- v \Vert= \min_{\dim(U_\alpha)=r_\alpha} \Vert u - \Pc_{U_\alpha} u \Vert. \label{pca_best_subspace}
\end{align}
\begin{remark}
The optimization problem \eqref{pca_best_subspace} over subspaces of dimension $r_\alpha$ in $\Hc_\alpha$ admits a unique solution $U_\alpha^\star$ if and only if 
 $\sigma_{r_\alpha+1}^{\alpha}>\sigma_{r_\alpha}^\alpha$. 
\end{remark}

\subsection{Principal component analysis for tree-based tensor format}\label{sec:pca-tree-based-tensors}

Here, we propose and analyse 
an algorithm for the construction of an approximation $u^\star$ of a function $u$ in tree-based format $\Tc^A_r(\Hc)$. 
%We introduce approximation spaces $V_\nu \subset \Hc_\nu$, $1\le \nu \le d$, and $V = \bigotimes_{\nu=1}^d V_\nu \subset \Hc$. The proposed algorithm provides an approximation in $\Tc^T_r(V)$, the set of tensors in $V$ with $T$-rank bounded by $r$. 
It is based 
 on the construction of a hierarchy of subspaces $U_\alpha$, $\alpha\in A$, from principal component analyses {of approximations of $u$ in low-dimensional spaces in $ \Hc_\alpha$}. 
 This is a variant of the leaves-to-root higher-order singular value decomposition method proposed in \cite{grasedyck2010} (see also \cite[Section 11.4.2.3]{hackbusch2012book}).

% \subsection{Description of the algorithm}
{For each leaf node $\alpha \in \Lc(T)$, we introduce a finite-dimensional approximation space  
$V_\alpha \subset \Hc_\alpha$ with dimension $\dim(V_\alpha)=n_\alpha$, and we let $V = \bigotimes_{\alpha \in \Lc(T)} V_\alpha \subset \Hc$. For each non active node $\alpha \in \Lc(T)\setminus A$, we let $U_\alpha = V_\alpha$.
The algorithm then goes through all active nodes of the tree, going from the leaves to the root. 
 For each $\alpha \in A$, we let 
  $$
 u_\alpha = \Pc_{V_\alpha} u,
 $$ 
 where for $\alpha \notin \Lc(T)$, $V_\alpha$ is defined 
by
$$
V_\alpha = \bigotimes_{\beta\in S(\alpha)} U_\beta,
$$  
where the $U_\beta$, $\beta \in S(\alpha)$, have been determined at a previous step.
}
Then we determine 
the $r_\alpha$-dimensional  
$\alpha$-principal subspace $U_\alpha$ of $u_\alpha$, which is solution of 
   \begin{align}
 \Vert u_\alpha - \Pc_{U_\alpha} u_\alpha \Vert = \min_{\rank_{\alpha}(v) \le r_\alpha} \Vert u_\alpha - v \Vert.\label{optimal_Ualpha}
\end{align} 
% Then we define 
% $$
% u^{\ell} = \Pc_{T_\ell} u,
% $$
% where
% $$
% \Pc_{T_\ell} = \prod_{\alpha\in T_\ell} \Pc_{U_\alpha} 
% $$
%is the orthogonal projection from $\Hc$ onto $U_{T_\ell} \otimes \Hc_{t_\ell^c}$, with  
%$U_{T_\ell} = \bigotimes_{\alpha\in T_\ell} U_\alpha.$ 
 { Finally, we define 
 \begin{align}
 u^\star = \Pc_{V_D} u,
 \label{ustar_ideal}
\end{align}
where $\Pc_{V_D}$ is the orthogonal projection from $\Hc$ onto $V_D = \bigotimes_{\beta\in S(D)} U_\beta.$
}

 \subsection{Analysis of the algorithm}

\begin{lemma}\label{lem:nestesness}
For $\alpha \in \Lc(A)$, $U_\alpha \subset V_\alpha$. For $\alpha\in A \setminus \Lc(A)$, $$U_\alpha \subset \bigotimes_{\beta \in S(\alpha)} U_\beta.$$
\end{lemma}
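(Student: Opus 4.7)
The plan is to reduce both statements to a single observation: the $\alpha$-principal subspace $U_\alpha$ produced by the algorithm is, by construction, the $\alpha$-principal subspace of $u_\alpha = \Pc_{V_\alpha} u$, and the paper has already noted that principal subspaces sit inside minimal subspaces. So it suffices to show that $U_\alpha^{min}(u_\alpha)\subset V_\alpha$ in both cases; the leaf/interior distinction then only changes the explicit form of $V_\alpha$.

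First I would argue the key containment $U_\alpha^{min}(u_\alpha)\subset V_\alpha$. Since $V_\alpha$ is a finite-dimensional subspace of $\Hc_\alpha$, the orthogonal projection $\Pc_{V_\alpha}=P_{V_\alpha}\otimes id_{\alpha^c}$ maps $\Hc$ into $V_\alpha\otimes \Hc_{\alpha^c}$. Therefore $u_\alpha=\Pc_{V_\alpha}u$ lies in $V_\alpha\otimes \Hc_{\alpha^c}$, and by the very definition of the minimal subspace (the smallest closed subspace $U\subset\Hc_\alpha$ such that $u_\alpha\in U\otimes\Hc_{\alpha^c}$, recalled just after the introduction of matricisations), we get $U_\alpha^{min}(u_\alpha)\subset V_\alpha$.

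Next, $U_\alpha$ is by construction a solution of the best $\alpha$-rank-$r_\alpha$ approximation problem \eqref{optimal_Ualpha} for $u_\alpha$, i.e. an $\alpha$-principal subspace of $u_\alpha$ obtained from the truncated SVD of the matricisation. As already recorded in Section \ref{sec:pca-order-2}, such a principal subspace is spanned by the first $r_\alpha$ left singular vectors, which lie in $U_\alpha^{min}(u_\alpha)=\overline{\linearspan\{u^\alpha_k\}_{k=1}^{\rank_\alpha(u_\alpha)}}$. Hence $U_\alpha\subset U_\alpha^{min}(u_\alpha)\subset V_\alpha$.

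It only remains to substitute the definition of $V_\alpha$. If $\alpha\in\Lc(A)$, then $V_\alpha$ is the chosen approximation space, so $U_\alpha\subset V_\alpha$ is exactly the first claim. If $\alpha\in A\setminus\Lc(A)$, then by the recursive definition used in the algorithm $V_\alpha=\bigotimes_{\beta\in S(\alpha)}U_\beta$ (using $U_\beta=V_\beta$ when $\beta\in S(\alpha)$ is a non-active leaf), and the containment $U_\alpha\subset V_\alpha$ yields the second claim. There is no real obstacle: the whole argument rests on the elementary fact that a projection onto $V_\alpha\otimes\Hc_{\alpha^c}$ cannot enlarge the $\alpha$-minimal subspace beyond $V_\alpha$.
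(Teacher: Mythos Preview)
Your proof is correct and follows essentially the same approach as the paper: both argue $U_\alpha\subset U_\alpha^{min}(u_\alpha)\subset V_\alpha$ using that $u_\alpha=\Pc_{V_\alpha}u\in V_\alpha\otimes\Hc_{\alpha^c}$. The only cosmetic difference is that for $\alpha\in A\setminus\Lc(A)$ the paper inserts the intermediate chain $U_\alpha^{min}(u_\alpha)\subset\bigotimes_{\beta\in S(\alpha)}U_\beta^{min}(u_\alpha)$ with $U_\beta^{min}(u_\alpha)\subset U_\beta$ (using $u_\alpha=\prod_{\beta\in S(\alpha)}\Pc_{U_\beta}u$), whereas you go directly through $U_\alpha^{min}(u_\alpha)\subset V_\alpha=\bigotimes_{\beta\in S(\alpha)}U_\beta$; your route is slightly more direct but the content is the same.
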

\begin{proof}
For $\alpha \in A$, we have $U_\alpha \subset U^{min}_\alpha(u_\alpha)$. If $\alpha \in \Lc(A)$, we have 
$U^{min}_\alpha(u_\alpha) \subset V_\alpha$ since $u_\alpha = \Pc_{V_\alpha} u$. If $\alpha \in A \setminus \Lc(A)$, we have   $U^{min}_\alpha(u_\alpha) \subset \bigotimes_{\beta\in S(\alpha)} U^{min}_\beta(u_\alpha)$, and $U^{min}_\beta(u_\alpha) \subset U_\beta$ since $u_\alpha = \prod_{\beta \in S(\alpha)} \Pc_{U_\beta} u.$
\end{proof}

\begin{proposition}
The approximation $u^\star$   is an element of $\Tc_r^A(\Hc) \cap V = \Tc^A_r(V)$.
\end{proposition}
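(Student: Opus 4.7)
My plan is to establish the two defining conditions of $\Tc_r^A(V)$ separately: first that $u^\star$ lies in $V$, and second that $\rank_\alpha(u^\star) \le r_\alpha$ for every $\alpha \in A$. Both assertions will follow from the nestedness statement of Lemma \ref{lem:nestesness}, applied inductively, combined with the definition $u^\star = \Pc_{V_D} u \in V_D = \bigotimes_{\beta \in S(D)} U_\beta$.

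For the first point, I would prove by induction on the level $\ell$ of a node $\alpha$, going from the leaves of $T$ upward, the auxiliary claim that $U_\alpha \subset \bigotimes_{\gamma \in \Lc(T), \gamma \subset \alpha} V_\gamma$. The base case covers the leaves of $T$: for $\alpha \in \Lc(A)$, Lemma \ref{lem:nestesness} gives $U_\alpha \subset V_\alpha$, while for $\alpha \in \Lc(T) \setminus A$ the equality $U_\alpha = V_\alpha$ holds by construction. For the inductive step at an active non-leaf node $\alpha$, Lemma \ref{lem:nestesness} yields $U_\alpha \subset \bigotimes_{\beta \in S(\alpha)} U_\beta$, and applying the induction hypothesis to each son $\beta \in S(\alpha)$ (together with the partition property $\bigcup_{\beta \in S(\alpha)} \beta = \alpha$) concludes the step. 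Evaluating this at the sons of the root gives $V_D = \bigotimes_{\beta \in S(D)} U_\beta \subset \bigotimes_{\gamma \in \Lc(T)} V_\gamma = V$, whence $u^\star \in V_D \subset V$.

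For the second point, I would prove by induction on the level $\ell$ of $\alpha \in A$ the claim that $V_D \subset U_\alpha \otimes \Hc_{\alpha^c}$ (after the matricisation identification $\Mc_\alpha$). If $\alpha \in A \cap T_1$, i.e. $\alpha \in S(D)$, this is immediate from $V_D = U_\alpha \otimes \bigotimes_{\gamma \in S(D) \setminus \{\alpha\}} U_\gamma$ and the fact that $\bigotimes_{\gamma \in S(D) \setminus \{\alpha\}} U_\gamma \subset \Hc_{\alpha^c}$. For the inductive step, if $\alpha \in A$ has level $\ell \ge 2$, let $\alpha'$ denote the (active) parent of $\alpha$; by induction, $V_D \subset U_{\alpha'} \otimes \Hc_{(\alpha')^c}$, and Lemma \ref{lem:nestesness} yields $U_{\alpha'} \subset \bigotimes_{\gamma \in S(\alpha')} U_\gamma = U_\alpha \otimes \bigotimes_{\gamma \in S(\alpha') \setminus \{\alpha\}} U_\gamma \subset U_\alpha \otimes \Hc_{\alpha' \setminus \alpha}$, so the chain of inclusions gives $V_D \subset U_\alpha \otimes \Hc_{\alpha^c}$. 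Since $u^\star \in V_D$, we deduce $\rank_\alpha(u^\star) \le \dim(U_\alpha) \le r_\alpha$.

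The two inductions together yield $u^\star \in V$ and $\rank_\alpha(u^\star) \le r_\alpha$ for every $\alpha \in A$, i.e. $u^\star \in \Tc_r^A(\Hc) \cap V = \Tc_r^A(V)$. I do not foresee any genuine obstacle; the proof is essentially bookkeeping, and the only minor care needed is to track tensor product reorderings carefully under the matricisation isometry $\Mc_\alpha$ so that the inclusions $V_D \subset U_\alpha \otimes \Hc_{\alpha^c}$ are unambiguous. Everything else is a direct consequence of Lemma \ref{lem:nestesness} and the definitions.
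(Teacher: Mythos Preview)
Your proof is correct and follows essentially the same approach as the paper: both arguments start from $u^\star \in V_D = \bigotimes_{\beta\in S(D)} U_\beta$ and use Lemma~\ref{lem:nestesness} recursively to obtain $u^\star \in V$ and $u^\star \in U_\alpha \otimes \Hc_{\alpha^c}$ for every $\alpha\in A$. You have simply spelled out the two inductions that the paper leaves implicit.
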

\begin{proof}
Since $u^\star = {\Pc_{V_D} u}$, we have $u^\star \in \bigotimes_{\alpha \in S(D)} U_\alpha $. Then using Lemma 
 \ref{lem:nestesness}, we prove by recursion that $u^\star \in \bigotimes_{\alpha \in \Lc(T)} V_\alpha = V$. Also, for any $\beta \in A$, Lemma \ref{lem:nestesness} implies that $u^\star \in U_{\beta}\otimes \Hc_{\beta^c}$. Therefore, $U^{min}_\beta(u^\star) \subset U_\beta$, and $\rank_{\beta}(u^\star) \le \dim(U_\beta)=r_\beta$. This proves that $u^\star \in \Tc_r^A(\Hc)$.
%We first note that for all $\alpha$ and $\beta$ in $T\setminus \{D\}$, we have either 
%$\beta \subset \alpha$ of $\beta \in D\setminus \alpha$. Therefore, from Proposition  \ref{prop:rank_bound}, we have that $\rank_{\alpha}(\Pc_{U_\beta}v) \le \rank_{\alpha}(v),$ for all $\alpha,\beta\in A$ and for all tensor $v \in \Hc$. 
%Then from Lemma \ref{lem:commute_Pl} and Proposition \ref{prop:rank_bound}, we have that for  $\alpha \in A_\ell$,  
%$\rank_{\alpha}(u^\star) = \rank_{\alpha}(\Pc_{T_1}\hdots \Pc_{T_{\ell}} u) \le \rank_\alpha(\Pc_{T_{\ell}} u) = 
%\rank_\alpha(\prod_{\beta\in T_\ell} \Pc_{U_\beta} u) \le \rank_\alpha(\Pc_{U_\alpha} u) \le r_\alpha $. Also, noting that $u^\star \in \bigotimes_{\alpha \in T_1} U_\alpha $, and using Lemma \ref{lem:nestesness},
% we easily prove  that $u^\star \in \bigotimes_{\alpha \in \Lc(T)} V_\alpha = V$. 
\end{proof}

{For any level $\ell$, $1 \le \ell\le L$, let $\Pc_{T_\ell} = \prod_{\alpha \in T_\ell} \Pc_{U_\alpha}$ be the orthogonal projection from $\Hc$ onto $U_{T_\ell} \otimes \Hc_{t_\ell^c}$, with $U_{T_\ell} = \bigotimes_{\alpha \in T_\ell} U_\alpha,$ and let 
$$
u^\ell = \Pc_{T_\ell} u^{\ell+1},
$$
with the convention $u^{L+1}=u.$}

\begin{lemma}\label{lem:commute_Pl}
For all $1\le \ell<\ell' \le L$, we have 
$$
\Pc_{T_{\ell'}}\Pc_{T_\ell} = \Pc_{T_\ell} =\Pc_{T_\ell}\Pc_{T_{\ell'}} .  
$$
\end{lemma}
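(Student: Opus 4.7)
My plan is to establish the relation by showing that the range of $\Pc_{T_\ell}$ is contained in the range of $\Pc_{T_{\ell'}}$ whenever $\ell<\ell'$, and then invoking Proposition~\ref{prop:properties_projections} for orthogonal projections, which gives the double equality immediately (since for $U \subset \tilde U$ with $P, \tilde P$ orthogonal projections, we have $\tilde P P = P \tilde P = P$).

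First, I would record that $\Pc_{T_\ell} = \prod_{\alpha \in T_\ell} \Pc_{U_\alpha}$ is the orthogonal projection from $\Hc$ onto $U_{T_\ell}\otimes \Hc_{t_\ell^c}$: since the nodes in $T_\ell$ are pairwise disjoint subsets of $D$, the operators $\Pc_{U_\alpha} = P_{U_\alpha}\otimes id_{\alpha^c}$ act on disjoint groups of variables, so they commute and their product is the orthogonal projection onto the intersection $\bigcap_{\alpha\in T_\ell}(U_\alpha\otimes \Hc_{\alpha^c}) = U_{T_\ell}\otimes \Hc_{t_\ell^c}$.

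Next, I would treat the case $\ell' = \ell+1$. Decompose $T_\ell = (T_\ell\cap \Lc(T)) \cup (T_\ell \setminus \Lc(T))$. Every $\beta \in T_{\ell+1}$ has a parent in $T_\ell \setminus \Lc(T)$, and the children sets $\{S(\alpha) : \alpha \in T_\ell\setminus \Lc(T)\}$ partition $T_{\ell+1}$. Correspondingly, $t_{\ell+1} = \bigcup_{\alpha\in T_\ell\setminus \Lc(T)} \alpha$ and $t_\ell\setminus t_{\ell+1} = \bigcup_{\alpha \in T_\ell \cap \Lc(T)}\alpha$, so $\Hc_{t_{\ell+1}^c} = \Hc_{t_\ell \setminus t_{\ell+1}} \otimes \Hc_{t_\ell^c}$. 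Using Lemma~\ref{lem:nestesness} for each non-leaf $\alpha \in T_\ell \setminus \Lc(T)$ (which is necessarily active, since $T\setminus A\subset \{D\}\cup \Lc(T)$), we have $U_\alpha \subset \bigotimes_{\beta\in S(\alpha)} U_\beta$. Taking tensor products over $\alpha \in T_\ell \setminus \Lc(T)$ yields
\[
\bigotimes_{\alpha \in T_\ell\setminus \Lc(T)} U_\alpha \;\subset\; \bigotimes_{\alpha\in T_\ell\setminus \Lc(T)}\bigotimes_{\beta\in S(\alpha)} U_\beta = \bigotimes_{\beta\in T_{\ell+1}} U_\beta = U_{T_{\ell+1}}.
\]
Combining with the leaf factor $\bigotimes_{\alpha \in T_\ell \cap \Lc(T)} U_\alpha \subset \Hc_{t_\ell \setminus t_{\ell+1}}$, I obtain $U_{T_\ell} \otimes \Hc_{t_\ell^c} \subset U_{T_{\ell+1}} \otimes \Hc_{t_\ell \setminus t_{\ell+1}} \otimes \Hc_{t_\ell^c} = U_{T_{\ell+1}} \otimes \Hc_{t_{\ell+1}^c}$, i.e.\ the range of $\Pc_{T_\ell}$ is contained in the range of $\Pc_{T_{\ell+1}}$. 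Proposition~\ref{prop:properties_projections} then gives $\Pc_{T_{\ell+1}} \Pc_{T_\ell} = \Pc_{T_\ell} \Pc_{T_{\ell+1}} = \Pc_{T_\ell}$.

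Finally, an induction on $\ell'-\ell$ extends this to arbitrary $\ell < \ell'$: by the case just proved the ranges satisfy $\mathrm{range}(\Pc_{T_\ell}) \subset \mathrm{range}(\Pc_{T_{\ell+1}}) \subset \cdots \subset \mathrm{range}(\Pc_{T_{\ell'}})$, and one more application of Proposition~\ref{prop:properties_projections} closes the argument. The only delicate step is the bookkeeping around leaves that appear at level $\ell$ but have no children at level $\ell+1$; I handle this cleanly by the split of $T_\ell$ into leaves and non-leaves and the matching decomposition $t_{\ell+1}^c = (t_\ell\setminus t_{\ell+1}) \cup t_\ell^c$.
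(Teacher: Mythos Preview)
Your proof is correct and follows essentially the same approach as the paper: establish the nesting of ranges $U_{T_\ell}\otimes \Hc_{t_\ell^c} \subset U_{T_{\ell+1}}\otimes \Hc_{t_{\ell+1}^c}$ via Lemma~\ref{lem:nestesness}, then invoke Proposition~\ref{prop:properties_projections}. Your presentation is slightly more explicit about the leaf/non-leaf split and the fact that $\Pc_{T_\ell}$ is the orthogonal projection onto $U_{T_\ell}\otimes \Hc_{t_\ell^c}$, but the underlying argument is the same.
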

\begin{proof}
For $1\le \ell<L$, we deduce from Lemma \ref{lem:nestesness}  that 
$$U_{T_\ell}  = \bigotimes_{\alpha\in T_\ell} U_\alpha \subset (\bigotimes_{\beta \in T_{\ell+1}} U_\beta) \otimes (\bigotimes_{\substack{\alpha \in T_\ell \\ S(\alpha) = \emptyset}} U_\alpha) \subset 
 U_{T_{\ell+1}} \otimes  \Hc_{t_{\ell}\setminus t_{\ell+1}},
$$
and then  $U_{T_\ell} \otimes \Hc_{t_\ell^c} \subset  U_{T_{\ell+1}} \otimes \Hc_{t_{\ell+1}^c}.$ 
 Therefore, for $1\le \ell < \ell' \le L$, we have 
$U_{T_\ell} \otimes \Hc_{t_\ell^c} \subset  U_{T_{\ell'}} \otimes \Hc_{t_{\ell'}^c},$
 and the result follows from  Proposition \ref{prop:properties_projections}.
\end{proof}
{From Lemma \ref{lem:commute_Pl}, we have that $$u^\ell = \Pc_{T_\ell} u^{\ell+1}  = \Pc_{T_\ell} \hdots \Pc_{T_L} u =  \Pc_{T_\ell} u,$$
for $1\le \ell \le L$, 
and 
$$
u^\star = \Pc_{T_1} u = u^1.
$$}

We now state the two main results about the proposed algorithm. 
			
 \begin{theorem}
 For a given $r$, the approximation $u^\star \in \Tc_r^A(\Hc)\cap V$   satisfies 
 $$\Vert u - u^\star \Vert^2 \le {\#A} \min_{v\in \Tc^A_r(\Hc)} \Vert u-v \Vert^2 + \sum_{\alpha \in \Lc(T)} \Vert u - \Pc_{V_\alpha} u \Vert^2.$$
\end{theorem}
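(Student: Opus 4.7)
The strategy is to decompose the error by levels, then by nodes at each level, and finally to relate the nodal errors to the best approximation error using the optimality of each principal subspace together with Proposition~\ref{prop:rank_bound}.

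First, by Lemma~\ref{lem:commute_Pl} the orthogonal projections $\Pc_{T_1},\hdots,\Pc_{T_L}$ have nested ranges; setting $\Pc_{T_{L+1}}=\id$ and telescoping $\id-\Pc_{T_1}=\sum_{\ell=1}^L(\Pc_{T_{\ell+1}}-\Pc_{T_\ell})$ produces pairwise orthogonal increments, whence
$$\Vert u-u^\star\Vert^2 = \sum_{\ell=1}^{L}\Vert (\id-\Pc_{T_\ell})u^{\ell+1}\Vert^2.$$
Since each $\Pc_{T_\ell}=\prod_{\alpha\in T_\ell}\Pc_{U_\alpha}$ is a product of commuting orthogonal projections on disjoint tensor factors, the usual telescoping inequality for commuting orthogonal projections yields
$$\Vert (\id-\Pc_{T_\ell})u^{\ell+1}\Vert^2 \le \sum_{\alpha\in T_\ell}\Vert u^{\ell+1}-\Pc_{U_\alpha}u^{\ell+1}\Vert^2.$$

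The second step is a per-node analysis. Fix $\alpha\in T_\ell$. The nodes of $T_{\ell+1}$ split into the children $S(\alpha)$ (if $\alpha$ is not a leaf) and nodes disjoint from $\alpha$, so $\Pc_{T_{\ell+1}}$ factorises as $P'_\alpha\otimes Q_{\alpha^c}$ for some orthogonal projection $Q_{\alpha^c}$ on $\Hc_{\alpha^c}$, with $P'_\alpha=P_{V_\alpha}$ when $\alpha\in A\setminus\Lc(T)$ (using $V_\alpha=\bigotimes_{\beta\in S(\alpha)}U_\beta$) and $P'_\alpha=\id_\alpha$ when $\alpha\in\Lc(T)$. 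Writing $u=\Pc_{V_\alpha}u+(\id-\Pc_{V_\alpha})u$ and using $U_\alpha\subset V_\alpha$ from Lemma~\ref{lem:nestesness} (so that $P_{U_\alpha}P_{V_\alpha}=P_{U_\alpha}$), a Pythagorean expansion on the $\alpha$-factor followed by the contractivity of $\id\otimes Q_{\alpha^c}$ yields
$$\Vert u^{\ell+1}-\Pc_{U_\alpha}u^{\ell+1}\Vert^2 \le \Vert u_\alpha-\Pc_{U_\alpha}u_\alpha\Vert^2+\Vert u-\Pc_{V_\alpha}u\Vert^2,$$
where the discretization term drops when $\alpha\in A\setminus\Lc(T)$ (since then $u^{\ell+1}$ already lies in the range of $\Pc_{V_\alpha}$), and the principal-subspace term vanishes when $\alpha\in\Lc(T)\setminus A$ (since $U_\alpha=V_\alpha$).

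For the third step, since $U_\alpha$ is the $r_\alpha$-dimensional principal subspace of $u_\alpha$ for each $\alpha\in A$, the optimality \eqref{pca_best_subspace} combined with Proposition~\ref{prop:rank_bound}, applied to a best approximation $v^\star\in\Tc^A_r(\Hc)$ (which gives $\rank_\alpha(\Pc_{V_\alpha}v^\star)\le \rank_\alpha(v^\star)\le r_\alpha$), implies
$$\Vert u_\alpha-\Pc_{U_\alpha}u_\alpha\Vert \le \Vert u_\alpha-\Pc_{V_\alpha}v^\star\Vert = \Vert\Pc_{V_\alpha}(u-v^\star)\Vert \le \Vert u-v^\star\Vert.$$
Summing the nodal bound over $\alpha\in T\setminus\{D\}$, the contribution $\Vert u-v^\star\Vert^2$ appears once for each active node (giving $\#A$ copies across $A\setminus\Lc(T)$ and $\Lc(A)$) and the contribution $\Vert u-\Pc_{V_\alpha}u\Vert^2$ appears once for each leaf, active or not (collectively $\sum_{\alpha\in\Lc(T)}$), which is the claimed inequality. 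The main obstacle is the structural factorisation $\Pc_{T_{\ell+1}}=P'_\alpha\otimes Q_{\alpha^c}$ relative to $\alpha$, which is what makes the $\Pc_{U_\alpha}$-contribution reduce cleanly to the one coming from the SVD of $u_\alpha$; the rest is standard Pythagorean bookkeeping.
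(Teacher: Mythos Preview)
Your proof is correct and follows essentially the same approach as the paper: both decompose $\Vert u-u^\star\Vert^2$ into pairwise orthogonal level increments via Lemma~\ref{lem:commute_Pl}, split each level increment over the nodes of $T_\ell$, reduce each nodal term to $\Vert u_\alpha-\Pc_{U_\alpha}u_\alpha\Vert^2$ (plus a discretization term $\Vert u-\Pc_{V_\alpha}u\Vert^2$ for leaves) using the factorisation of $\Pc_{T_{\ell+1}}$ relative to $\alpha$, and finally invoke the optimality of $U_\alpha$ together with Proposition~\ref{prop:rank_bound} to bound $\Vert u_\alpha-\Pc_{U_\alpha}u_\alpha\Vert$ by the best approximation error. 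The only cosmetic difference is that the paper compares with $\Pc_{U_\alpha^\star}u$ (the optimal $\alpha$-subspace) while you compare directly with $\Pc_{V_\alpha}v^\star$ for a best $v^\star\in\Tc_r^A(\Hc)$; these are equivalent.
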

	\begin{proof}
We first note that for all $1\le \ell < \ell' \le L$, $u^{\ell}-u^{\ell+1}$ is orthogonal to $u^{\ell'}-u^{\ell'+1}$. Indeed, using Lemma \ref{lem:commute_Pl}, we obtain that
\begin{align*}
(u^{\ell}-u^{\ell+1} , u^{\ell'} - u^{\ell'+1}) &= (u^{\ell}-u^{\ell+1}, \Pc_{T_{\ell'}}u^{\ell'+1} - \Pc_{T_{\ell'+1}} u^{\ell'+1}) \\
&= (\Pc_{T_{\ell'}}(u^{\ell}-u^{\ell+1}) , \Pc_{T_{\ell'+1}} (\Pc_{T_{\ell'}}u^{\ell'+1} -  u^{\ell'+1}))\\
&= (\Pc_{T_{\ell'+1}} \Pc_{T_{\ell'}}(u^{\ell}-u^{\ell+1}) ,  \Pc_{T_{\ell'}}u^{\ell'+1} -  u^{\ell'+1})\\
&=(\Pc_{T_{\ell'}}(u^{\ell}-u^{\ell+1}) , ( \Pc_{T_{\ell'}}-id)u^{\ell'+1}) = 0.
\end{align*}
%$u^{\ell'}-u^{\ell'+1} = (\Pc_{T_{\ell'}}-\id) u^{\ell'+1},$ and from 
%Lemma \ref{lem:commute_Pl}, we have $u^{\ell} - u^{\ell+1} = \Pc_{T_{\ell+1}} (u^{\ell} - u^{\ell+1}) =   \Pc_{T_{\ell'}} \Pc_{T_{\ell+1}} (u^{\ell} - u^{\ell+1})  = \Pc_{T_{\ell'}} (u^{\ell} - u^{\ell+1})$.  
Then, we have  	
 \begin{align*}
 \Vert u - u^\star \Vert^2 &= \sum_{\ell=1}^L \Vert u^{\ell+1} - u^{\ell}  \Vert^2 = \sum_{\ell=1}^L  \Vert u^{\ell+1} - \Pc_{T_\ell}  u^{\ell+1} \Vert^2\\
 & \le  \sum_{\ell=1}^L \sum_{\alpha\in T_\ell}  \Vert u^{\ell+1} - \Pc_{U_\alpha}  u^{\ell+1} \Vert^2. 
 \end{align*}
From Lemma \ref{lem:commute_Pl},  we know that $u^{\ell+1} = \Pc_{T_{\ell+1}} u $, where we use the convention $\Pc_{T_{L+1}} = id$.
For $\alpha \in   \Lc(T_\ell)$, since $\Pc_{U_\alpha}$ and $\Pc_{T_{\ell+1}}$ commute, $\Vert u^{\ell+1} - \Pc_{U_\alpha}  u^{\ell+1} \Vert =  \Vert \Pc_{T_{\ell+1}} u - \Pc_{U_\alpha}  \Pc_{T_{\ell+1}} u \Vert = \Vert  \Pc_{T_{\ell+1}} (u- \Pc_{U_\alpha} u) \Vert \le\Vert  u- \Pc_{U_\alpha} u \Vert .$ Therefore, for $\alpha \in \Lc(T_\ell)\setminus \Lc(A)$, 
we have  
 $\Vert u^{\ell+1} - \Pc_{U_\alpha}  u^{\ell+1} \Vert \le \Vert  u- \Pc_{V_\alpha} u \Vert $, 
 and for 
 $\alpha \in  \Lc(A_\ell)$, 
 we have  
 $\Vert u^{\ell+1} - \Pc_{U_\alpha}  u^{\ell+1} \Vert^2 \le \Vert  u- \Pc_{V_\alpha} u \Vert^2 +  \Vert  \Pc_{V_\alpha}  u- \Pc_{U_\alpha} u \Vert^2   = \Vert  u- \Pc_{V_\alpha} u \Vert^2  + \Vert u_\alpha - \Pc_{U_\alpha} u_\alpha \Vert^2 $. 
 For $\alpha \in A_\ell\setminus \Lc(A)$,  we have 
    $$u^{\ell+1} = \Pc_{T_{\ell+1}} u= \prod_{\delta\in T_{\ell+1}  \setminus S(\alpha)}  \Pc_{U_\delta}  \prod_{\beta \in S(\alpha)} \Pc_{U_\beta}u = \prod_{\delta\in T_{\ell+1}  \setminus S(\alpha)}\Pc_{U_\delta}  u_\alpha,$$  
   so that 
   $$\Vert u^{\ell+1} - \Pc_{U_\alpha}  u^{\ell+1} \Vert = \Vert \prod_{\delta\in T_{\ell+1} \setminus S(\alpha)} \Pc_{U_\delta}  (u_\alpha- \Pc_{U_\alpha} u_\alpha) \Vert \le  \Vert u_\alpha - \Pc_{U_\alpha} u_\alpha \Vert.$$ 
   Gathering the above results, we obtain
\begin{align}
\Vert u - u^\star \Vert^2 =  \sum_{\alpha\in A } 
  \Vert  u_\alpha- \Pc_{U_\alpha}  u_\alpha \Vert^2+ \sum_{\alpha \in \Lc(T)} \Vert u - \Pc_{V_\alpha} u\Vert^2  .\label{eq:u-ustar}
 \end{align}
For $\alpha \in A$, we let $ U_\alpha^\star$ be the subspace in $\Hc_{\alpha} $ such that
$$\Vert u - \Pc_{U_\alpha^\star} u \Vert = \min_{\rank_\alpha(v)\le r_\alpha} \Vert u - v \Vert \le \min_{\rank_A(v)\le r} \Vert u - v \Vert  .$$
For $\alpha \in \Lc(A)  $, we have $u_\alpha = \Pc_{V_\alpha} u$. From Proposition \ref{prop:rank_bound}, we know that $\rank_\alpha(\Pc_{V_\alpha}\Pc_{U_\alpha^\star} u ) \le \rank_\alpha(\Pc_{U_\alpha^\star} u)\le r_\alpha$. The optimality of $U_\alpha$ then implies that 
 $$\Vert  u_\alpha- \Pc_{U_\alpha}  u_\alpha \Vert \le  \Vert \Pc_{V_\alpha}u -  \Pc_{V_\alpha}\Pc_{U_\alpha^\star} u \Vert \le \Vert u - \Pc_{U_\alpha^\star} u \Vert.$$ Now consider $\alpha \notin A\setminus \Lc(A)$. 
We know that $\rank_\alpha(\prod_{\beta \in S(\alpha)} \Pc_{U_\beta} \Pc_{U_\alpha^\star}u) \le \rank_\alpha( \Pc_{U_\alpha^\star}u) \le r_\alpha$ from Proposition \ref{prop:rank_bound}.  The optimality of $U_\alpha$ then implies that  
\begin{align*}
 \Vert  u_\alpha- \Pc_{U_\alpha}  u_\alpha \Vert  & \le \Vert u_\alpha - \prod_{\beta \in S(\alpha)} \Pc_{U_\beta} \Pc_{U_\alpha^\star}u \Vert =  \Vert  \prod_{\beta \in S(\alpha)} \Pc_{U_\beta} (u - \Pc_{U_\alpha^\star}u )\Vert
 \\
 & \le \Vert u - \Pc_{U_\alpha^\star}u \Vert.
 \end{align*}
Finally, we obtain 
$$
  \sum_{\alpha\in A } 
  \Vert  u_\alpha- \Pc_{U_\alpha}  u_\alpha \Vert^2 \le   \sum_{\alpha\in A} \min_{\rank_\alpha(v)\le r_\alpha } \Vert u - v \Vert^2 \le \#A \min_{\rank_A(v)} \Vert u - v \Vert^2,
$$
which ends the proof. 
\end{proof}
 \begin{theorem}
For any $\epsilon \ge 0$, if for all $\alpha \in A$,  the rank $r_\alpha$ is chosen such that 
 $$
 \Vert u_\alpha - \Pc_{U_\alpha} u_\alpha \Vert \le \frac{\epsilon}{\sqrt{\#A}} \Vert u_\alpha \Vert,
 $$
 the approximation $u^\star$ satisfies
 $$
 \Vert u - u^\star \Vert^2 \le  \sum_{\alpha \in \Lc(T)} \Vert u - \Pc_{V_\alpha} u \Vert^2 + \epsilon^2 \Vert u \Vert^2.
 $$
 \end{theorem}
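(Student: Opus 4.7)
The plan is to reuse the key identity established in the proof of the previous theorem, namely
\begin{align*}
\Vert u - u^\star \Vert^2 = \sum_{\alpha\in A} \Vert u_\alpha - \Pc_{U_\alpha} u_\alpha \Vert^2 + \sum_{\alpha \in \Lc(T)} \Vert u - \Pc_{V_\alpha} u\Vert^2,
\end{align*}
which was proved purely from the orthogonality of successive level-wise projections (Lemma \ref{lem:commute_Pl}) and does not depend on how the subspaces $U_\alpha$ are obtained. The second sum is exactly the leaf discretization error appearing in the target bound, so the only thing left to control is the first sum.

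Next, I would apply the hypothesis on $r_\alpha$ termwise. By assumption, for every $\alpha \in A$,
\begin{align*}
\Vert u_\alpha - \Pc_{U_\alpha} u_\alpha \Vert^2 \le \frac{\epsilon^2}{\#A} \Vert u_\alpha \Vert^2.
\end{align*}
The remaining ingredient is to bound $\Vert u_\alpha \Vert$ by $\Vert u \Vert$. For $\alpha \in \Lc(A)$ this is immediate because $u_\alpha = \Pc_{V_\alpha} u$ is an orthogonal projection. For $\alpha \in A \setminus \Lc(A)$, the definition gives $u_\alpha = \Pc_{V_\alpha} u$ with $V_\alpha = \bigotimes_{\beta \in S(\alpha)} U_\beta$, which is again an orthogonal projection onto a closed subspace of $\Hc$, hence contractive. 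In either case $\Vert u_\alpha \Vert \le \Vert u \Vert$.

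Combining, we get
\begin{align*}
\sum_{\alpha \in A} \Vert u_\alpha - \Pc_{U_\alpha} u_\alpha \Vert^2 \le \sum_{\alpha \in A} \frac{\epsilon^2}{\#A} \Vert u \Vert^2 = \epsilon^2 \Vert u \Vert^2,
\end{align*}
and plugging back into the identity yields the claimed bound. There is no real obstacle here: the proof is essentially a bookkeeping step once the decomposition \eqref{eq:u-ustar} is available. The only subtle point is verifying that $\Pc_{V_\alpha}$ is indeed a norm-one orthogonal projection at non-leaf active nodes, which follows from the fact that $V_\alpha$ is a closed subspace of $\Hc$ (being a tensor product of finite-dimensional subspaces $U_\beta \subset \Hc_\beta$) and from the identification $\Pc_{V_\alpha} = \prod_{\beta \in S(\alpha)} \Pc_{U_\beta}$ where each $\Pc_{U_\beta}$ is an orthogonal projection on $\Hc$.
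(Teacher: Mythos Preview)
Your proof is correct and follows exactly the same route as the paper: start from the decomposition \eqref{eq:u-ustar} established in the previous theorem, apply the hypothesis termwise, and use that each $u_\alpha$ is an orthogonal projection of $u$ (hence $\Vert u_\alpha\Vert\le\Vert u\Vert$) to sum the $\#A$ terms. The only cosmetic point is that \eqref{eq:u-ustar} is in fact an inequality (the derivation uses $\le$ at several steps), not an identity, but this is exactly how the paper states and uses it as well.
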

 \begin{proof}
Starting from \eqref{eq:u-ustar}, we obtain 
  \begin{align*}
  \Vert u - u^\star \Vert^2 &\le  \sum_{\alpha \in \Lc(T)} \Vert u - \Pc_{V_\alpha} u \Vert^2 +  \sum_{\alpha\in A} \frac{\epsilon^2}{{\#A}} \Vert u_\alpha \Vert^2 ,
 \end{align*}
and the result follows from 
$\Vert u_\alpha \Vert = \Vert \prod_{\beta \in S(\alpha)}  \Pc_{U_\alpha}u\Vert \le \Vert u \Vert$ if $\alpha \notin A\setminus \Lc(A)$, and $\Vert u_\alpha \Vert = \Vert  \Pc_{V_\alpha} u\Vert \le \Vert u \Vert$ if $\alpha \in \Lc(A)$.
 \end{proof}
 	
\section{Empirical principal component analysis for tree-based tensor format}\label{sec:empirical-pca}
 
 \subsection{Empirical principal component analysis of multivariate functions} 
{Here we present the empirical principal component analysis for the statistical estimation of $\alpha$-principal subspaces of a multivariate function (see Section \ref{sec:pca-order-2}).}  We  consider that $\Hc = L^2_{\mu}(\Xc)$ or that $\Hc$ is a separable reproducing kernel Hilbert space compactly embedded in $L^2_{\mu}(\Xc)${, equipped with the natural norm  in $L^2_{\mu}(\Xc)$}. 
Let $ (X_\alpha,X_{\alpha^c})$ be the random vector with values in  $\Xc_\alpha \times \Xc_{\alpha^c}$ with probability law $ \mu_\alpha \otimes \mu_{\alpha^c}$. The tensor $u$ can be identified with a random variable defined on $\Xc_{\alpha^c}$ with values 
in $\Hc_{\alpha}$ which associates to $x_{\alpha^c} \in \Xc_{\alpha^c}$ the function $u(\cdot,x_{\alpha^c}) = (id_\alpha \otimes \delta_{x_{\alpha^c}}) u,$ {this random variable being an element of the Bochner space $L^2_{\mu_{\alpha^c}}(\Xc_{\alpha^c};\Hc_\alpha)$}. 
{Then problem  \eqref{pca_best_subspace} is equivalent to find a $r_\alpha$-dimensional subspace in $\Hc_\alpha$ solution of 
\begin{equation}
%\Ebb\left( \Vert u(\cdot, X_{\alpha^c}) - P_{U_\alpha^\star} u(\cdot, X_{\alpha^c}) \Vert_{\Hc_{\alpha}} \right) = 
\min_{\dim(U_\alpha) = r_\alpha}  \Ebb\left( \Vert u(\cdot, X_{\alpha^c}) - P_{U_\alpha} u(\cdot, X_{\alpha^c}) \Vert_{\Hc_{\alpha}}^2 \right).\label{PCA_expectation}
\end{equation}}
Given a set $\{x_{\alpha^c}^k\}_{k=1}^{m_\alpha}$ of  $m_\alpha$ samples of  $X_{\alpha^c}$, the $\alpha$-principal subspace can be estimated by 
an \emph{empirical $\alpha$-principal subspace}   $\widehat U_\alpha$  solution of 
 \begin{align}
 \Vert u - \Pc_{\widehat U_\alpha} u \Vert_{\alpha,m_\alpha} = { \min_{ \dim(U_\alpha) = r_\alpha} \Vert u - \Pc_{ U_\alpha} u\Vert_{\alpha,m_\alpha}},\label{optimal_Ualpha}
\end{align} 
where 
$$
{\Vert  u - \Pc_{ U_\alpha} u \Vert_{\alpha,m_\alpha}^2 = \frac{1}{m_\alpha} \sum_{k=1}^{m_\alpha}
\Vert  u(\cdot,x_{\alpha^c}^k) - P_{U_\alpha} u  (\cdot,x_{\alpha^c}^k) \Vert_{\Hc_\alpha}^2.}
$$
 The problem is equivalent to finding the  $r_\alpha$ left principal components of  $\{u(\cdot,x_{\alpha^c}^k)\}_{k=1}^{m_\alpha} $, which is identified with an order-two tensor in $\Hc_\alpha \otimes \Rbb^{m_\alpha}$. We note that the number of samples $m_\alpha$ must be such that $m_\alpha\ge r_\alpha$ in order to estimate $r_\alpha$ principal components.  {In the case of i.i.d. samples, the semi-norm $\Vert  \cdot \Vert_{\alpha,m_\alpha}$ on $\Hc$ is the natural statistical estimation of the Bochner norm $\Vert \cdot\Vert_\alpha$ in $L^2_{\mu_{\alpha^c}}(\Xc_{\alpha^c};\Hc_\alpha)$, defined by  
 $\Vert v \Vert_\alpha^2 = \Ebb(\Vert v(X_{\alpha^c}) \Vert_{\Hc_\alpha})$. This norm $\Vert \cdot\Vert_\alpha$  coincides with the norm $\Vert \cdot \Vert$ on $\Hc$ when $\Hc $ is equipped with the $L^2_{\mu}(\Xc)$-norm.\footnote{{Note that when $\Hc$ is equipped with a norm stronger than the norm in $L^2_{\mu}(\Xc)$, then $\Vert \cdot \Vert_\alpha$ does not coincides with the norm $\Vert \cdot \Vert$ on $\Hc$, so that the subspaces solutions of \eqref{pca_best_subspace} and \eqref{PCA_expectation} will be different in general.}}
}

For some results on the comparison between $\Vert u - \Pc_{\widehat U_\alpha} u \Vert$ and the best approximation error $\Vert u - \Pc_{ U_\alpha^\star} u \Vert,$ see {
 \cite{blanchard2007statistical,reiss2016non,2018arXiv180303868J,2018arXiv180202869J}}.  Under suitable assumptions on $u$ (e.g., $u$ uniformly bounded), for any $\eta >0$ and $\epsilon>0$, there exists a $m_\alpha$ sufficiently large (depending on $\eta,\epsilon,r_\alpha$ and $u$) such that 
 $$
 \Vert u - \Pc_{\widehat U_\alpha} u \Vert^2 \le \Vert u - \Pc_{ U_\alpha^\star} u \Vert^2 + \epsilon^2
 $$
 holds with probability higher than $1-\eta$. Then, for any $\tau > 0$, there exists a $m_\alpha$ sufficiently large (depending on $\eta,\tau,r_\alpha$ and $u$) such that 
  $$
 \Vert u - \Pc_{\widehat U_\alpha} u \Vert^2 \le (1+\tau^2) \Vert u - \Pc_{ U_\alpha^\star} u \Vert^2
 $$
 holds with probability higher than $1-\eta$.

 \subsection{Empirical principal component analysis for tree-based format}
 
 Now we propose a  modification of the 
 algorithm proposed in Section \ref{sec:pca-tree-based-tensors} using only evaluations of the function $u$ at some selected points in $\Xc$. It is based 
 on the construction of a hierarchy of subspaces $\{U_\alpha\}_{\alpha\in A}$, from 
 empirical principal component analysis, and a corresponding hierarchy of commuting interpolation operators associated with nested sets of points.  
 
{For each leaf node $\alpha \in \Lc(T)$, we introduce a finite-dimensional approximation space  
$V_\alpha \subset \Hc_\alpha$ with dimension $\dim(V_\alpha)=n_\alpha$,  we introduce a set {$\Gamma_{V_\alpha}$} of  points in $\Xc_\alpha$ which is unisolvent for $V_\alpha$, we denote by $I_{V_\alpha}$ the corresponding interpolation operator from $\Hc_\alpha$ to $V_\alpha$, and we let 
$\Ic_{V_\alpha} = I_{V_\alpha} \otimes id_{\alpha^c}$ be the corresponding oblique 
projection from $\Hc$ to $V_\alpha \otimes \Hc_{\alpha^c}$. We let $V = \bigotimes_{\alpha \in \Lc(T)} V_\alpha \subset \Hc$. 
For each non active $\alpha \in \Lc(T)\setminus A$, we let $U_\alpha = V_\alpha$ and $\Gamma_{U_\alpha} = \Gamma_{V_\alpha}$. }

{The algorithm then goes through all active nodes of the tree, going from the leaves to the root. }

{For each active node $\alpha \in A$, we let
$$u_\alpha = \Ic_{V_\alpha} u$$
where for $\alpha\notin \Lc(A)$, the space $V_\alpha$ is defined by 
$$
V_\alpha = \bigotimes_{\beta\in S(\alpha)} U_\beta,
$$
where the $U_\beta$, $\beta \in S(\alpha)$, have been determined at a previous step. For $\alpha \notin \Lc(A)$, 
$\Ic_{V_\alpha}  =   I_{V_\alpha} \otimes id_{\alpha^c}$, where $I_{V_\alpha}$ is the interpolation operator onto $V_\alpha = \bigotimes_{\beta \in S(\alpha)} U_\beta$ associated with the product grid $\Gamma_{V_\alpha} = \bigtimes_{\beta\in S(\alpha)} \Gamma_{U_\beta}$, where each $ \Gamma_{V_\beta} $ have been determined at a previous step.}
Then we determine a {$r_\alpha$-dimensional empirical $\alpha$-principal} subspace $U_\alpha$ of $u_\alpha$, which is solution of 
 \begin{align}
 \Vert u_\alpha - \Pc_{U_\alpha} u_\alpha \Vert_{\alpha,m_\alpha} = \min_{\rank_{\alpha}(v) \le r_\alpha} \Vert u_\alpha - v \Vert_{\alpha,m_\alpha},\label{optimal_Ualpha}
\end{align} 
where 
$$
\Vert u_\alpha - v \Vert_{\alpha,m_\alpha}^2 = \frac{1}{m_\alpha} \sum_{k=1}^{m_\alpha}
\Vert u_{\alpha}(\cdot,x_{\alpha^c}^k) - v(\cdot,x_{\alpha^c}^k) \Vert_{\Hc_\alpha}^2,
$$
and where $\{x_{\alpha^c}^k\}_{k=1}^{m_\alpha}$ are $m_\alpha$ random samples of  $X_{\alpha^c}$, with $m_\alpha\ge r_\alpha$. The problem is equivalent to finding the $r_\alpha$ left principal components of  $\{u_\alpha(\cdot,x_{\alpha^c}^k)\}_{k=1}^{m_\alpha} $, which is identified with an order two tensor in {$V_\alpha \otimes \Rbb^{m_\alpha}$}.  The number of evaluations of the function $u$ for computing $U_\alpha$ is
$m_\alpha \times  {\dim(V_\alpha)}$. 
We let $\{\varphi^\alpha_k\}_{k=1}^{r_\alpha}$ be the set of principal components, such that 
 $U_\alpha = \linearspan\{\varphi^\alpha_k\}_{k=1}^{r_\alpha}$. We then construct a set of points 
 $\Gamma_{U_\alpha}$ which is unisolvent for $U_\alpha$, and such that 
  \begin{align}
\Gamma_{U_\alpha} \subset \Gamma_{V_\alpha}. \label{nestedgrids}
 \end{align}
 For the practical construction of the set $\Gamma_{U_\alpha}$, we use the procedure described in Section \ref{sec:magicpoints}. 
We denote by $I_{U_\alpha}$ the interpolation operator from $\Hc_\alpha$ onto $U_\alpha$ associated with the grid $\Gamma_{U_\alpha}$, and we let 
$\Ic_{U_\alpha} = I_{U_\alpha} \otimes id_{\alpha^c}$ be the corresponding   projection   from $\Hc$ onto $U_\alpha \otimes \Hc_{\alpha^c}$.
 
 {Finally, we compute 
 \begin{align}
 u^\star = \Ic_{V_D} u,
 \label{ustar_ideal}
\end{align}
where $\Ic_{V_D} = \bigotimes_{\beta \in S(D)} I_{U_\beta}$ is the interpolation operator from $\Hc$ onto $V_D = \bigotimes_{\beta\in S(D)} U_\beta,$ associated with the product grid $\Gamma_{V_D} =  \bigtimes_{\beta\in S(D)} \Gamma_{U_\beta}$.
}

\subsection{Analysis of the algorithm}  

{Let us first prove that the algorithm produces an approximation $u^\star$ in the desired tensor format. }

\begin{lemma}\label{lem:nestesness_interp}
For $\alpha \in \Lc(T)\setminus A$, $
U_\alpha = V_\alpha$. For $\alpha \in \Lc(A)$, $U_\alpha \subset V_\alpha$. For $\alpha\in A \setminus \Lc(A)$, $$U_\alpha \subset \bigotimes_{\beta \in S(\alpha)} U_\beta.$$
\end{lemma}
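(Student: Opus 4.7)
The plan is to mirror the argument for Lemma \ref{lem:nestesness}, handling each of the three cases by unfolding the definitions from the algorithm and invoking the appropriate containment of minimal subspaces. The key difference from the orthogonal-projection version is only that $\Pc_{V_\alpha}$ is replaced by $\Ic_{V_\alpha} = I_{V_\alpha}\otimes \id_{\alpha^c}$; what matters for the nestedness property is not that the operator is an orthogonal projection, but simply that its range lies in $V_\alpha\otimes \Hc_{\alpha^c}$.

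First I would treat $\alpha\in \Lc(T)\setminus A$: here the equality $U_\alpha = V_\alpha$ holds by construction, since the algorithm explicitly sets $U_\alpha := V_\alpha$ for non-active leaves. Next, for $\alpha\in \Lc(A)$, the space $U_\alpha$ is the span of the principal components obtained from the empirical PCA of $u_\alpha = \Ic_{V_\alpha} u$. Since $\Ic_{V_\alpha}$ has range in $V_\alpha\otimes \Hc_{\alpha^c}$, we have $u_\alpha \in V_\alpha\otimes \Hc_{\alpha^c}$, and therefore $U^{min}_\alpha(u_\alpha)\subset V_\alpha$. As the empirical $\alpha$-principal components of $u_\alpha$ are elements of $U^{min}_\alpha(u_\alpha)$, we conclude $U_\alpha \subset V_\alpha$.

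Finally, for $\alpha \in A\setminus \Lc(A)$, the algorithm sets $V_\alpha = \bigotimes_{\beta\in S(\alpha)} U_\beta$ (with the $U_\beta$ already constructed at the previous level), and defines $u_\alpha = \Ic_{V_\alpha} u$ with $\Ic_{V_\alpha} = I_{V_\alpha}\otimes \id_{\alpha^c}$. Exactly as in the leaf case, $u_\alpha \in V_\alpha \otimes \Hc_{\alpha^c} = \bigl(\bigotimes_{\beta\in S(\alpha)} U_\beta\bigr) \otimes \Hc_{\alpha^c}$, so $U^{min}_\alpha(u_\alpha) \subset \bigotimes_{\beta\in S(\alpha)} U_\beta$, and hence $U_\alpha \subset \bigotimes_{\beta\in S(\alpha)} U_\beta$.

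I do not expect a genuine obstacle here: the whole argument is a direct unfolding of the algorithm's definitions together with the defining property of minimal subspaces. The only subtle point worth emphasising is that the product grid $\Gamma_{V_\alpha} = \bigtimes_{\beta\in S(\alpha)} \Gamma_{U_\beta}$ is indeed unisolvent for $V_\alpha = \bigotimes_{\beta\in S(\alpha)} U_\beta$, so that $I_{V_\alpha}$ (and thus $\Ic_{V_\alpha}$) is well defined and has range in $V_\alpha\otimes \Hc_{\alpha^c}$; this is guaranteed by the unisolvence of each $\Gamma_{U_\beta}$ for $U_\beta$ constructed at the previous step, together with the standard tensorisation of unisolvent grids.
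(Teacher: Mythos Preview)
Your proof is correct and follows essentially the same approach as the paper's. The only minor difference is that for $\alpha\in A\setminus\Lc(A)$ the paper routes through the intermediate inclusions $U^{min}_\alpha(u_\alpha)\subset\bigotimes_{\beta\in S(\alpha)} U^{min}_\beta(u_\alpha)$ and $U^{min}_\beta(u_\alpha)\subset U_\beta$, whereas you use the more direct observation $u_\alpha\in V_\alpha\otimes\Hc_{\alpha^c}\Rightarrow U^{min}_\alpha(u_\alpha)\subset V_\alpha=\bigotimes_{\beta\in S(\alpha)} U_\beta$; both reach the same conclusion.
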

\begin{proof}
For $\alpha \in A$, we have $U_\alpha \subset U^{min}_\alpha(u_\alpha)$. If $\alpha \in \Lc(A)$, we have 
$U^{min}_\alpha(u_\alpha) \subset V_\alpha$ since $u_\alpha = \Ic_{V_\alpha} u$. If $\alpha \in A \setminus \Lc(A)$, we have   $U^{min}_\alpha(u_\alpha) \subset \bigotimes_{\beta\in S(\alpha)} U^{min}_\beta(u_\alpha)$, and $U^{min}_\beta(u_\alpha) \subset U_\beta$ since $u_\alpha = \prod_{\beta \in S(\alpha)} \Ic_{U_\beta} u.$
\end{proof}

\begin{proposition}\label{interpolation_ustar_in_TcrA}
The algorithm produces an approximation $$u^\star \in \Tc_r^A(\Hc) \cap V = \Tc^A_r(V).$$
\end{proposition}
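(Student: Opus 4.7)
The plan is to mimic the proof of the analogous proposition in Section \ref{sec:pca-tree-based-tensors}, noting that only the nestedness relations in Lemma \ref{lem:nestesness_interp} (and not any orthogonality) are used, so that orthogonal projections $\Pc$ can be freely replaced by the oblique projections $\Ic$ produced here by interpolation.

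First I would show that $u^\star \in V$. Since $u^\star = \Ic_{V_D} u$ and $\Ic_{V_D}$ projects onto $\bigotimes_{\beta \in S(D)} U_\beta$, we have $u^\star \in \bigotimes_{\beta \in S(D)} U_\beta$. I then iterate Lemma \ref{lem:nestesness_interp}: for each $\beta \in S(D)$ that lies in $A \setminus \Lc(A)$, replace the factor $U_\beta$ by $\bigotimes_{\gamma \in S(\beta)} U_\gamma$; for $\beta \in \Lc(A)$ replace $U_\beta$ by $V_\beta$; for $\beta \in \Lc(T)\setminus A$ we already have $U_\beta = V_\beta$. Continuing the recursion until every remaining factor sits on a leaf of $T$, we end up with $u^\star \in \bigotimes_{\alpha \in \Lc(T)} V_\alpha = V$.

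Next I would show that $\rank_\beta(u^\star) \le r_\beta$ for every $\beta \in A$. The key auxiliary claim is: for every $\alpha \in A \cup \{D\}$ and every descendant $\beta \in A$ of $\alpha$,
\[
\bigotimes_{\delta \in S(\alpha)} U_\delta \subset U_\beta \otimes \Hc_{\alpha \setminus \beta}.
\]
This is proved by induction on $\level(\beta) - \level(\alpha)$: if $\beta \in S(\alpha)$ the inclusion is immediate (using $U_\delta \subset \Hc_\delta$ for the other factors). Otherwise let $\delta \in S(\alpha)$ be the unique child of $\alpha$ with $\beta \subset \delta$; by Lemma \ref{lem:nestesness_interp} $U_\delta \subset \bigotimes_{\gamma \in S(\delta)} U_\gamma$, and the induction hypothesis applied at $\delta$ gives $\bigotimes_{\gamma \in S(\delta)} U_\gamma \subset U_\beta \otimes \Hc_{\delta \setminus \beta}$, whence the claim.

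Applying this claim at $\alpha = D$, and combining with $u^\star \in \bigotimes_{\beta' \in S(D)} U_{\beta'}$, yields $u^\star \in U_\beta \otimes \Hc_{\beta^c}$ for every $\beta \in A$. By the definition of the minimal subspace, $U^{min}_\beta(u^\star) \subset U_\beta$, and therefore $\rank_\beta(u^\star) = \dim(U^{min}_\beta(u^\star)) \le \dim(U_\beta) = r_\beta$. Since this holds for every $\beta \in A$, we conclude $u^\star \in \Tc_r^A(\Hc) \cap V = \Tc_r^A(V)$.

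There is no real obstacle here; the only point requiring some care is the inductive argument for the auxiliary claim, because one must track precisely which tensor factor absorbs the descendant $\beta$. Everything else is a direct transcription of the Section \ref{sec:pca-tree-based-tensors} argument with oblique projections in place of orthogonal ones, which is admissible because the proof uses only the inclusion $\mathrm{Im}(\Ic_{U_\alpha}) = U_\alpha \otimes \Hc_{\alpha^c}$ together with the nestedness encoded in Lemma \ref{lem:nestesness_interp}.
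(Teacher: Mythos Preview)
Your proof is correct and follows essentially the same approach as the paper's: both start from $u^\star \in \bigotimes_{\beta\in S(D)} U_\beta$, use Lemma~\ref{lem:nestesness_interp} recursively to land in $V$, and then again to obtain $u^\star \in U_\beta \otimes \Hc_{\beta^c}$ for each $\beta\in A$. You have simply made explicit the inductive argument that the paper summarizes with the phrases ``by recursion'' and ``Lemma~\ref{lem:nestesness_interp} implies,'' which is a welcome clarification rather than a different method.
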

\begin{proof}
Since $u^\star = { \Ic_{V_D} u}$, we have $u^\star \in {V_D =}  \bigotimes_{\alpha \in S(D)} U_\alpha $. Then using Lemma 
 \ref{lem:nestesness_interp}, we prove by recursion that $u^\star \in \bigotimes_{\alpha \in \Lc(T)} V_\alpha = V$. Also, for any $\alpha \in A$, Lemma \ref{lem:nestesness_interp} implies  that {$u^\star \in U_{\alpha} \otimes \Hc_{\alpha^c}$}. Therefore, $U^{min}_\alpha(u^\star) \subset U_\alpha$, and $\rank_{\alpha}(u^\star) \le \dim(U_\alpha)=r_\alpha$. This proves that $u^\star \in \Tc_r^A(\Hc)$.
 %
%We first note that for all $\alpha$ and $\beta$ in $T$, we have either 
%$\beta \subset \alpha$ or $\beta \in D\setminus \alpha$. Therefore, from Proposition  \ref{prop:rank_bound}, we have that $\rank_{\alpha}(\Ic_{U_\beta}v) \le \rank_{\alpha}(v),$ for all $\alpha,\beta$ and for all tensor $v$. 
%Then from Lemma \ref{lem:commute_Il} and Proposition \ref{prop:rank_bound}, we have that for  $\alpha \in A_\ell$,  
%$\rank_{\alpha}(u^\star) = \rank_{\alpha}(\Ic_{T_1}\hdots \Ic_{T_{\ell}} u) \le \rank_\alpha(\Ic_{T_{\ell}} u) = 
%\rank_\alpha(\prod_{\beta\in T_\ell} \Ic_{U_\beta} u) \le \rank_\alpha(\Ic_{U_\alpha} u) \le r_\alpha $. Also, noting that $u^\star \in \bigotimes_{\alpha \in T_1} U_\alpha $, and using Lemma \ref{lem:nestesness_interp},
% we prove that $u^\star \in \bigotimes_{\alpha \in \Lc(T)} V_\alpha = V$. 
\end{proof}

For all $\alpha \in T$, the operator $\Ic_{V_\alpha}= I_{V_\alpha} \otimes id_{\alpha^c}$ is a projection from $\Hc$ onto $V_\alpha \otimes \Hc_{\alpha^c}$ along $W_{\alpha}^\star\otimes \Hc^*_{\alpha^c},$ with $W_\alpha^\star = \linearspan\{\delta_x : x \in \Gamma_{V_\alpha}\}$. 
For all {$\alpha \in T\setminus \{D\}$}, the operator $\Ic_{U_\alpha}= I_{U_\alpha} \otimes id_{\alpha^c}$ is an oblique projection from $\Hc$ onto $U_\alpha \otimes \Hc_{\alpha^c}$ along $W_\alpha\otimes \Hc^*_{\alpha^c},$ with $W_\alpha = \linearspan\{\delta_x : x \in \Gamma_{U_\alpha}\}$. From the {property  \eqref{nestedgrids}} of the grids, we deduce the following result. 
\begin{lemma}\label{lem:nestesness_interp_oblique}
For $\alpha \in \Lc(T)\setminus A$, $
W_\alpha = W_{\alpha}^\star $. For $\alpha \in \Lc(A)$, $W_\alpha \subset W_\alpha^\star$. For $\alpha\in A \setminus \Lc(A)$, $$W_\alpha \subset W_{S(\alpha)} = \bigotimes_{\beta \in S(\alpha)} W_\beta.$$
\end{lemma}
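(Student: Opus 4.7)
The plan is to dispatch the three cases separately, each being a direct bookkeeping consequence of how the grids $\Gamma_{U_\alpha}$ and $\Gamma_{V_\alpha}$ are produced by the algorithm.

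\textbf{Case $\alpha \in \Lc(T)\setminus A$.} This is purely by definition: for a non-active leaf, the algorithm sets $U_\alpha = V_\alpha$ and $\Gamma_{U_\alpha} = \Gamma_{V_\alpha}$, so the two spanning sets of point evaluation functionals coincide, giving $W_\alpha = W_\alpha^\star$.

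\textbf{Case $\alpha \in \Lc(A)$.} Here $\Gamma_{U_\alpha}$ is the unisolvent subgrid of $\Gamma_{V_\alpha}$ selected by the magic-points procedure, so property~\eqref{nestedgrids} gives $\Gamma_{U_\alpha} \subset \Gamma_{V_\alpha}$. Since the linear span is monotone in the generating set, $W_\alpha = \linearspan\{\delta_x : x\in \Gamma_{U_\alpha}\} \subset \linearspan\{\delta_x : x\in \Gamma_{V_\alpha}\} = W_\alpha^\star$.

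\textbf{Case $\alpha \in A\setminus \Lc(A)$.} This is the only case that requires a small extra argument, but it is not substantive. By construction $V_\alpha = \bigotimes_{\beta\in S(\alpha)} U_\beta$ with the product interpolation grid $\Gamma_{V_\alpha} = \bigtimes_{\beta\in S(\alpha)} \Gamma_{U_\beta}$. A point $x\in \Gamma_{V_\alpha}$ has the form $x=(x_\beta)_{\beta\in S(\alpha)}$ with $x_\beta \in \Gamma_{U_\beta}$, and the corresponding point evaluation functional factorises as $\delta_x = \bigotimes_{\beta\in S(\alpha)} \delta_{x_\beta}$. Consequently
$$
W_\alpha^\star = \linearspan\Bigl\{\bigotimes_{\beta\in S(\alpha)} \delta_{x_\beta} : x_\beta \in \Gamma_{U_\beta},\ \beta\in S(\alpha)\Bigr\} = \bigotimes_{\beta\in S(\alpha)} W_\beta = W_{S(\alpha)}.
$$
Combining this identification with property~\eqref{nestedgrids}, which again yields $\Gamma_{U_\alpha}\subset \Gamma_{V_\alpha}$ and therefore $W_\alpha \subset W_\alpha^\star$, we conclude $W_\alpha \subset W_{S(\alpha)}$.

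The only step that is not entirely cosmetic is the tensor-product factorisation of Dirac functionals on the product grid in the third case, but this is a standard identity for point evaluations on product sets, so no genuine obstacle is anticipated.
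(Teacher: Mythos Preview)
Your proof is correct and matches the paper's approach: the paper simply states that the lemma is deduced from property~\eqref{nestedgrids} of the grids, without spelling out the three cases, and your argument is exactly the unpacking of that deduction.
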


\begin{remark} Note that interpolation operators $I_{U_\alpha}$, $\alpha\in A$, could be replaced by oblique projections $P^{W_\alpha}_{U_\alpha}$ onto $U_\alpha$ along subspaces $W_\alpha$ in $\Hc_\alpha^*$, with subspaces $W_\alpha$ satisfying  
for $\alpha \notin \Lc(T)$, $W_\alpha \subset \bigotimes_{\beta \in S(\alpha)} W_\beta$. 
Under this condition, all results of this section remain valid. 
\end{remark}
%, which is an oblique projection from $\Hc_{\alpha}$ onto $U_{S(\alpha)}$ along $W_{S(\alpha)} = \bigotimes_{\beta \in S(\alpha)} W_\beta = \linearspan\{\delta_x : x\in \Gamma^{S(\alpha)}\}$.  
% along $W_\alpha \otimes \Hc^*_{\alpha^c}$, with $W_\alpha = \linearspan\{\delta_x : x \in \Gamma^\alpha\}.$
 %which is the oblique projection from $\Hc_{t_\ell}$ to $U_{T_\ell}$ along $W_{T_\ell} = \linearspan\{\delta_{x} : x\in \Gamma^{T_\ell}\} = \bigotimes_{\alpha\in T_\ell} W_\alpha.$

{For any level $\ell$, $1\le \ell \le L$, let $$
  \Ic_{T_\ell} =  \prod_{\alpha \in T_\ell} \Ic_{U_\alpha} = I_{U_{T_\ell}}  \otimes id_{t_\ell^c},$$
  where $I_{U_{T_\ell}} = \bigotimes_{\alpha\in T_\ell} I_{U_\alpha}$ is the interpolation operator from $\Hc_{t_\ell} $ to $U_{T_\ell} = \bigotimes_{\alpha \in T_\ell} U_\alpha$ associated with the tensor product grid $\Gamma^{T_\ell} = \bigtimes_{\alpha\in T_\ell} \Gamma^\alpha,$ and let 
  $$u^\ell =     \Ic_{T_\ell}u^{\ell+1}, $$
  with the convention $u^{L+1} = u.$}
We then prove that operators $\Ic_{T_\ell}$, $1\le \ell \le L$, are commuting oblique projections.
%For $\alpha \in A\setminus \Lc(T)$, the operator $ \Ic_{S(\alpha)}  = \Ic_{U_{S(\alpha)}} = \prod_{\beta\in S(\alpha)} \Ic_{U_\beta}$ is an oblique projection  from $\Hc$ onto $U_{S(\alpha)}\otimes \Hc_{\alpha^c}$ along $W_{S(\alpha)} \otimes \Hc^*_{\alpha^c}$, with $W_{S(\alpha)} = \linearspan\{\delta_x : x\in \Gamma^{S(\alpha)}\}$. Since the grids satisfy the property \eqref{nestedgrids}, we have that $W_{\alpha} \subset W_{S(\alpha)} = \bigotimes_{\beta \in S(\alpha)} W_\beta.$
%The operator  $\Ic_{T_\ell}$
%is an oblique projection from $\Hc$ onto $ U_{T_\ell} \otimes \Hc_{t_\ell^c}$ along 
%$ W_{T_\ell} \otimes \Hc_{t_\ell^c}$, where $U_{T_\ell} = \bigotimes_{\alpha\in T_\ell} U_\alpha$ and $W_{T_\ell} = \bigotimes_{\alpha\in T_\ell} W_\alpha$. 
%

\begin{lemma}\label{lem:commute_Il}
For all $1\le \ell \le L$, the operator $\Ic_{T_\ell}$ is an oblique projection from $\Hc$ to $\Uc_{\ell} := U_{T_\ell} \otimes \Hc_{t_\ell^c}$ along $\Wc_{\ell} := W_{T_\ell} \otimes \Hc_{t_\ell^c}^*$. For all $1\le \ell < \ell' \le  L$, we have $\Uc_{\ell} \subset \Uc_{\ell'}$ and 
$\Wc_{\ell}\subset \Wc_{\ell'}$, and therefore
$$
\Ic_{T_\ell}\Ic_{T_{\ell'}} = \Ic_{T_\ell} = \Ic_{T_{\ell'}} \Ic_{T_\ell}.
$$
\end{lemma}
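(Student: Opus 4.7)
The plan is to establish the three conclusions in order: (i) that each $\Ic_{T_\ell}$ really is an oblique projection onto $\Uc_\ell$ along $\Wc_\ell$; (ii) the nested inclusions $\Uc_\ell\subset \Uc_{\ell'}$ and $\Wc_\ell\subset \Wc_{\ell'}$ for $\ell<\ell'$; and (iii) the commutation identity, which will then be an immediate consequence of Proposition~\ref{prop:properties_projections}. The work is almost entirely in (ii); (i) and (iii) are essentially formal.

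For (i), I would observe that the nodes $\alpha\in T_\ell$ are pairwise disjoint (as they form a partition of $t_\ell$ into nodes of a single level of the dimension partition tree). Hence the operators $\Ic_{U_\alpha}=I_{U_\alpha}\otimes id_{\alpha^c}$ act on disjoint tensor factors of $\Hc=\bigl(\bigotimes_{\alpha\in T_\ell}\Hc_\alpha\bigr)\otimes \Hc_{t_\ell^c}$, so by the discussion of $\prod_{\alpha\in S}\Ac_\alpha$ in Section~\ref{sec:operators} their product is independent of ordering and satisfies
\[
\Ic_{T_\ell}=\prod_{\alpha\in T_\ell}\Ic_{U_\alpha}=\Bigl(\bigotimes_{\alpha\in T_\ell} I_{U_\alpha}\Bigr)\otimes id_{t_\ell^c}.
\]
Each factor $I_{U_\alpha}$ is a projection from $\Hc_\alpha$ onto $U_\alpha$ along $W_\alpha$, so the tensor product is a projection from $\Hc_{t_\ell}$ onto $U_{T_\ell}$ along $W_{T_\ell}$, whence $\Ic_{T_\ell}$ is an oblique projection from $\Hc$ onto $\Uc_\ell$ along $\Wc_\ell$.

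For (ii), by iterating it suffices to show $\Uc_\ell\subset \Uc_{\ell+1}$ and $\Wc_\ell\subset \Wc_{\ell+1}$ for $1\le \ell<L$. Split $T_\ell = (T_\ell\cap \Lc(T)) \cup (T_\ell\setminus \Lc(T))$. For $\alpha\in T_\ell\setminus \Lc(T)$ the node is in $A$ (since $T\setminus A\subset\{D\}\cup\Lc(T)$), so Lemma~\ref{lem:nestesness_interp} gives $U_\alpha\subset \bigotimes_{\beta\in S(\alpha)}U_\beta$ and Lemma~\ref{lem:nestesness_interp_oblique} gives $W_\alpha\subset \bigotimes_{\beta\in S(\alpha)}W_\beta$; moreover $T_{\ell+1}=\bigcup_{\alpha\in T_\ell\setminus\Lc(T)}S(\alpha)$ and these $S(\alpha)$ are disjoint. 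For $\alpha\in T_\ell\cap \Lc(T)$, the indices $\alpha$ lie in $t_\ell\setminus t_{\ell+1}$ and trivially $U_\alpha\subset \Hc_\alpha$, $W_\alpha\subset \Hc_\alpha^*$. Combining,
\[
U_{T_\ell}\subset \Hc_{t_\ell\setminus t_{\ell+1}}\otimes U_{T_{\ell+1}},\qquad W_{T_\ell}\subset \Hc_{t_\ell\setminus t_{\ell+1}}^*\otimes W_{T_{\ell+1}}.
\]
Tensoring with $\Hc_{t_\ell^c}$ (resp.\ $\Hc_{t_\ell^c}^*$) and using the identity $(t_\ell\setminus t_{\ell+1})\cup t_\ell^c = t_{\ell+1}^c$ yields $\Uc_\ell\subset \Uc_{\ell+1}$ and $\Wc_\ell\subset \Wc_{\ell+1}$.

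Finally, (iii) follows directly from Proposition~\ref{prop:properties_projections} applied to the oblique projections $\Ic_{T_\ell}$ and $\Ic_{T_{\ell'}}$ with $\Uc_\ell\subset \Uc_{\ell'}$ and $\Wc_\ell\subset \Wc_{\ell'}$: one concludes $\Ic_{T_{\ell'}}\Ic_{T_\ell}=\Ic_{T_\ell}\Ic_{T_{\ell'}}=\Ic_{T_\ell}$. The only delicate point in the whole argument is the bookkeeping between leaves of $T$ that appear at level $\ell$ but no longer at $\ell+1$ and the internal nodes that split into their children at level $\ell+1$; once the decomposition $T_\ell=(T_\ell\cap\Lc(T))\cup(T_\ell\setminus\Lc(T))$ and the set-theoretic identity $(t_\ell\setminus t_{\ell+1})\cup t_\ell^c = t_{\ell+1}^c$ are in hand, the inclusions drop out immediately.
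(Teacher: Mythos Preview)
Your proof is correct and follows essentially the same route as the paper: the same split $T_\ell=(T_\ell\cap\Lc(T))\cup(T_\ell\setminus\Lc(T))$, the same appeals to Lemmas~\ref{lem:nestesness_interp} and~\ref{lem:nestesness_interp_oblique} for the inclusions, and the same conclusion via Proposition~\ref{prop:properties_projections}. You are in fact slightly more explicit than the paper in spelling out part~(i) and in noting that the general case $\ell<\ell'$ follows by iterating the consecutive-level inclusions.
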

\begin{proof}
For  $1\le \ell <    L$, we have 
$$
\Uc_{\ell} =  \Big(\bigotimes_{\alpha \in T_\ell \setminus \Lc(T)}  U_\alpha \Big) \otimes \Big( 
\bigotimes_{\alpha \in T_\ell \cap \Lc(T)} U_\alpha \Big) \otimes \Hc_{t_\ell^c}\;  \text{and} \; \;   
  \Uc_{{\ell+1}} = \Big(\bigotimes_{\beta \in T_{\ell+1}} U_\beta \Big) \otimes \Hc_{t_{\ell+1}^c}.
$$
From Lemma  \ref{lem:nestesness_interp}, we know that $\bigotimes_{\alpha \in T_\ell \setminus \Lc(T)} U_\alpha $
 is a subspace of $\bigotimes_{\beta \in T_{\ell+1}} U_\beta \subset \Hc_{t_{\ell+1}}.$ Therefore, we obtain $\Uc_\ell \subset \Uc_{\ell+1}$. In the same way, using Lemma \ref{lem:nestesness_interp_oblique}, we obtain that $\Wc_\ell \subset \Wc_{\ell+1}$.
We then deduce $\Ic_{T_\ell} \Ic_{T_{\ell+1}} =\Ic_{T_{\ell+1}} \Ic_{T_\ell} = \Ic_{T_\ell}  $  from Proposition \ref{prop:properties_projections}, which ends the proof.
%For $1\le \ell <    L$, we have 
%$$\Ic_{T_\ell} = \Big(\bigotimes_{\alpha \in T_\ell \setminus \Lc(T)} I_{U_\alpha} \Big) \otimes \Big( 
%\bigotimes_{\alpha \in T_\ell \cap \Lc(T)} I_{U_\alpha}\Big) \otimes id_{t_\ell^c}, \quad \text{and} \quad 
%  \Ic_{T_{\ell+1}} = \Big(\bigotimes_{\beta \in T_{\ell+1}} I_{U_\beta} \Big) \otimes id_{t_{\ell+1}^c}.$$
% From Lemma  \ref{lem:nestesness_interp}, we know that $\bigotimes_{\alpha \in T_\ell \setminus \Lc(T)} U_\alpha $
% is a subspace of $\bigotimes_{\beta \in T_{\ell+1}} U_\beta \subset \Hc_{t_{\ell+1}}.$ From the nestedness property \eqref{nestedgrids} of the grids, we also know that the grid $\bigtimes_{\alpha \in T_\ell \setminus \Lc(T)} \Gamma^\alpha$
% is a subset of the grid   $\bigtimes_{\beta \in T_{\ell+1}} \Gamma^\beta \subset \Xc_{t_{\ell+1}}.$  Therefore, from 
%Proposition  \ref{interpolations_nestedgrids_commute}, we deduce that 
%the interpolation operators $\bigotimes_{\alpha \in T_\ell \setminus \Lc(T)} I_{U_\alpha}$ and $\bigotimes_{\beta \in T_{\ell+1}} I_{U_\beta}$ commute and their product is equal to $\bigotimes_{\alpha \in T_\ell \setminus \Lc(T)} I_{U_\alpha}$.
%From the expressions of $\Ic_{T_\ell} $ and $\Ic_{T_{\ell+1}}$, we easily deduce that $\Ic_{T_\ell} \Ic_{T_{\ell+1}} =\Ic_{T_{\ell+1}} \Ic_{T_\ell} = \Ic_{T_\ell}  $, which ends the proof.
\end{proof}

 \begin{lemma}\label{lem:bound_non_orthogonal}
 The approximation $u^\star $ satisfies 
 $$\Vert u - u^\star \Vert^2 \le (1+\delta(L-1)) \sum_{\ell=1}^{L} \Vert u^{\ell+1} - u^\ell \Vert^2  ,$$
 where $\delta = \max_{\ell} \delta_{T_\ell}$ and 
$$
 \delta_{T_\ell} = \Vert I_{U_{T_\ell}}-P_{U_{T_\ell}} \Vert_{U^{min}_{T_\ell}(u^{\ell+1}) \to \Hc_{t_\ell}},$$
 with $t_\ell = \cup_{\alpha \in T_\ell} \alpha$. 
% with $U^{min}_{t_\ell}(u^{\ell+1}) \subset U_{T_{\ell+1}} \otimes U^{min}_{t_\ell\setminus t_{\ell+1}}(u)$.
If $u\in V$, then
$$\delta_{T_\ell}\le \delta_{A_\ell} := \Vert I_{U_{A_\ell}} - P_{U_{A_\ell}} \Vert_{U^{min}_{A_\ell}(u^{\ell+1})\to \Hc_{a_\ell}},$$ with 
$a_\ell = \cup_{\alpha\in A_\ell} \alpha.$
\end{lemma}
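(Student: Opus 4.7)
The plan is to write $u - u^\star = \sum_{\ell=1}^{L} a_\ell$ with $a_\ell := u^{\ell+1}-u^\ell = (id - \Ic_{T_\ell})u^{\ell+1}$, and then split each summand as $a_\ell = b_\ell + c_\ell$ with the orthogonal part $b_\ell := (id - \Pc_{T_\ell})u^{\ell+1}$ and the oblique correction $c_\ell := (\Pc_{T_\ell} - \Ic_{T_\ell})u^{\ell+1}$. Since $c_\ell$ lies in the common range $\Uc_\ell$ of both projections while $b_\ell \perp \Uc_\ell$, we have $\Vert a_\ell\Vert^2 = \Vert b_\ell\Vert^2 + \Vert c_\ell\Vert^2$, hence $\Vert b_\ell\Vert \le \Vert a_\ell\Vert$. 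The tensor version of Proposition \ref{prop:oblique_projections_norms}, applied to $u^{\ell+1}$ on the $T_\ell$-matricisation, furnishes the key size bound $\Vert c_\ell\Vert \le \delta_{T_\ell}\,\Vert b_\ell\Vert \le \delta\,\Vert a_\ell\Vert$.

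Next I would exploit the near-orthogonality of the $a_\ell$'s. For any $\ell < \ell'$, the nestedness $\Uc_\ell \subset \Uc_{\ell'}$ supplied by Lemma \ref{lem:commute_Il}, combined with $u^{\ell+1} = \Ic_{T_{\ell+1}}u^{\ell+2}\in \Uc_{\ell+1} \subset \Uc_{\ell'}$ and $u^{\ell} = \Ic_{T_\ell}u^{\ell+1}\in \Uc_\ell \subset \Uc_{\ell'}$, yields $a_\ell \in \Uc_{\ell'}$, while $b_{\ell'} = (id-\Pc_{T_{\ell'}})u^{\ell'+1}$ is orthogonal to $\Uc_{\ell'}$. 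Consequently $(a_\ell, b_{\ell'})=0$ and the cross term collapses to $(a_\ell, a_{\ell'}) = (a_\ell, c_{\ell'})$. Expanding $\Vert u - u^\star\Vert^2 = \sum_\ell\Vert a_\ell\Vert^2 + 2\sum_{\ell<\ell'}(a_\ell, c_{\ell'})$, bounding each cross term via Cauchy--Schwarz by $\vert(a_\ell, c_{\ell'})\vert \le \delta\,\Vert a_\ell\Vert\,\Vert a_{\ell'}\Vert$, and applying the AM--GM estimate $2\sum_{\ell<\ell'}\Vert a_\ell\Vert\,\Vert a_{\ell'}\Vert \le (L-1)\sum_\ell\Vert a_\ell\Vert^2$ produces exactly the factor $1+\delta(L-1)$.

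For the second assertion, the idea is that if $u\in V$ then each $u^{\ell+1}$ still belongs to $V = \bigotimes_{\beta\in\Lc(T)}V_\beta$, because every $I_{V_\beta}$ and every $I_{U_\alpha}$ (with $U_\alpha\subset V_\alpha$ at a leaf, and $U_\alpha$ built from previously constructed subspaces inside $V$ otherwise, by Lemma \ref{lem:nestesness_interp}) maps into a subspace of $V$. For any non-active leaf $\beta \in T_\ell\setminus A_\ell$ one has $U_\beta = V_\beta$, so $I_{U_\beta} = I_{V_\beta}$ and $P_{U_\beta} = P_{V_\beta}$ both act as the identity on $V_\beta$. Factoring $I_{U_{T_\ell}} = I_{U_{A_\ell}}\otimes I_{V_{T_\ell\setminus A_\ell}}$ and similarly for $P_{U_{T_\ell}}$, and using $u^{\ell+1}$'s membership in $V_{T_\ell\setminus A_\ell}\otimes \Hc_{(t_\ell\setminus a_\ell)^c}$ to replace the non-active tensor factor by the identity, the operator $I_{U_{T_\ell}} - P_{U_{T_\ell}}$ reduces on $U^{min}_{T_\ell}(u^{\ell+1})$ to $(I_{U_{A_\ell}}-P_{U_{A_\ell}})\otimes id$, whose relevant operator norm equals $\delta_{A_\ell}$ by Proposition \ref{prop:Aalpha_correspondance}.

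I expect the main obstacle to be the orthogonality argument $a_\ell \in \Uc_{\ell'}$ together with $b_{\ell'}\perp \Uc_{\ell'}$: this is the only step that genuinely uses the hierarchy produced by the algorithm, and everything downstream is clean Cauchy--Schwarz/AM--GM bookkeeping. The reduction in the second claim is routine but requires careful tracking of which tensor factor the non-active-leaf part of $U^{min}_{T_\ell}(u^{\ell+1})$ lives in.
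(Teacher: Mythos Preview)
Your proposal is correct and follows essentially the same route as the paper: telescope $u-u^\star=\sum_\ell a_\ell$, use the nesting $\Uc_\ell\subset\Uc_{\ell'}$ from Lemma~\ref{lem:commute_Il} to kill the orthogonal part $b_{\ell'}$ in each cross term, bound the remaining oblique correction $c_{\ell'}$ via Proposition~\ref{prop:oblique_projections_norms}, and then collect. The only cosmetic difference is that the paper packages the final estimate as a spectral-radius bound via Gershgorin on the $L\times L$ matrix $B_{\ell,\ell'}=\delta_{T_{\max\{\ell,\ell'\}}}$ (diagonal $1$), whereas you use AM--GM directly; both yield the same factor $1+\delta(L-1)$, and your version is arguably cleaner. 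Your treatment of the second claim (reduction to $\delta_{A_\ell}$ when $u\in V$) also matches the paper's: the paper makes explicit the intermediate identity $I_{U_{T_\ell}}-P_{U_{T_\ell}}=(I_{U_{A_\ell}}-P_{U_{A_\ell}})\otimes P_{V_{T_\ell\setminus A_\ell}}$ on $U^{min}_{T_\ell}(u^{\ell+1})$, which is exactly the ``non-active factor acts as identity'' observation you sketch.
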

	\begin{proof}
Since $u - u^\star = \sum_{\ell=1}^L  (u^{\ell+1} -  u^{\ell})$,
we have 
 \begin{align*}
 \Vert u - u^\star \Vert^2 &= \sum_{\ell=1}^L  \Vert u^{\ell+1} -  u^{\ell} \Vert^2 + 2\sum_{\ell' < \ell}(u^{\ell+1} -  u^{\ell},u^{\ell'+1} -  u^{\ell'}). 
 \end{align*}
For $\ell'<\ell$, since $\Pc_{T_{\ell}}(u^{\ell'+1} - u^{\ell'})  = u^{\ell'+1} - u^{\ell'}$,
 we have 
  \begin{align*}
(u^{\ell+1} -u^{\ell},u^{\ell'+1} -   u^{\ell'})&=(u^{\ell+1} -u^{\ell},\Pc_{T_{\ell}}(u^{\ell'+1} -   u^{\ell'}))\\
 &=   (\Pc_{T_\ell} (u^{\ell+1} -  u^{\ell}) , u^{\ell'+1} -   u^{\ell'})\\
&= (\Pc_{T_\ell} u^{\ell+1} -  \Ic_{T_\ell} u^{\ell+1} , u^{\ell'+1} -   u^{\ell'})
\\
&= ( (\Pc_{T_{\ell}} -  \Ic_{T_{\ell}})(u^{\ell+1} - u^{\ell}), u^{\ell'+1} -  u^{\ell'})
\\
&\le \Vert (\Pc_{T_{\ell}} -  \Ic_{T_{\ell}})(u^{\ell+1} - u^\ell )\Vert \Vert u^{\ell'+1} - u^{\ell'} \Vert,
 \end{align*}
 where we have used the fact that $\Pc_{T_\ell}\Ic_{T_\ell} = \Ic_{T_\ell}$ and $(\Pc_{T_{\ell}} -  \Ic_{T_{\ell}})u^{\ell}=0.$
Since $\Pc_{T_\ell}- \Ic_{T_\ell}= (P_{U_{T_\ell}} - I_{U_{T_\ell}}) \otimes id_{t_{\ell}^c}$ and $u^{\ell+1}-u^\ell = u^{\ell+1}-\Ic_{T_\ell} u^{\ell+1} \subset U^{min}_{T_\ell}(u^{\ell+1})\otimes \Hc_{  t_\ell^c}$, we obtain from Proposition \ref{prop:oblique_projections_norms} that 
  \begin{align*}
\vert (u^{\ell+1} -   u^{\ell},u^{\ell'+1} -   u^{\ell'}) \vert &
 \le \delta_{T_\ell} \Vert  u^{\ell+1} - u^\ell \Vert \Vert u^{\ell'+1} - u^{\ell'} \Vert,
 \end{align*}
 for $\ell'<\ell$. 
 We deduce that  
 $$\Vert u- u^\star \Vert^2 \le \sum_{\ell,\ell'=1}^L B_{\ell,\ell'} \Vert u^{\ell+1} -   u^{\ell} \Vert \Vert u^{\ell'+1} - u^{\ell'} \Vert \le \rho(B) \sum_{\ell=1}^L \Vert u^{\ell+1} -   u^{\ell} \Vert^2,$$
 where the matrix $B\in \Rbb^{L\times L}$ is such that $B_{\ell,\ell}=1$ and $B_{\ell,\ell'} = \delta_{T_{\max\{\ell,\ell'\}}}$ if $\ell\neq \ell'$. Using the theorem of Gerschgorin, we have that $$\rho(B)\le 1 + \max_\ell \sum_{\ell'\neq \ell} B_{\ell,\ell'} = 1 + \max_\ell ((\ell-1)\delta_{T_\ell} + \sum_{\ell'>\ell} \delta_{T_{\ell'}}) \le 1+ \delta (L-1),$$
 with $\delta = \max_{\ell} \delta_{T_\ell}$.
 
 Finally, when $u\in V$, we have $U^{min}_\alpha(u^{\ell+1}) \subset U^{min}_\alpha(u)\subset V_\alpha$ for all $\alpha\in \Lc(T).$ Therefore, $I_{V_\alpha}v = P_{V_\alpha}v$ for all $v \in U^{min}_\alpha(u^{\ell+1}) $ and $\alpha \in \Lc(T)$, and 
\begin{align*} 
\delta_{T_\ell} 
&= \Vert I_{U_{A_\ell}}  \otimes  I_{V_{T_\ell\setminus A_\ell}} - P_{U_{A_\ell}} \otimes P_{V_{T_\ell\setminus A_\ell}} \Vert_{U^{min}_{T_\ell}(u^{\ell+1})\to \Hc_{t_\ell}}\\
&=\Vert (I_{U_{A_\ell}} - P_{U_{A_\ell}} ) \otimes P_{V_{T_\ell\setminus A_\ell}} \Vert_{U^{min}_{T_\ell}(u^{\ell+1})\to \Hc_{t_\ell}}\\
&= \Vert I_{U_{A_\ell}} - P_{U_{A_\ell}} \Vert_{U^{min}_{A_\ell}(u^{\ell+1}) \to \Hc_{a_\ell}} \Vert  P_{V_{T_\ell\setminus A_\ell}}   \Vert_{U^{min}_{T_\ell\setminus A_\ell}(u^{\ell+1}) \to \Hc_{t_\ell\setminus a_\ell}}
\\&\le \Vert I_{U_{A_\ell}} - P_{U_{A_\ell}} \Vert_{U^{min}_{A_\ell}(u^{\ell+1}) \to \Hc_{a_\ell}} = \delta_{A_\ell}.
\end{align*}
\end{proof}

\begin{lemma}\label{lem:norm_diff}
For $1\le \ell \le L$,
 \begin{align*}
  \Vert u^{\ell+1} - u^\ell \Vert^2 \le&   (1+\delta_{T_\ell}^2) \Big(  \sum_{\alpha \in A_\ell} \Lambda_{T_{\ell+1}\setminus S(\alpha)}^2 (1+a_\alpha) \Vert u_\alpha - \Pc_{U_\alpha} u_\alpha \Vert^2  
\\
&+ \sum_{\alpha \in T_\ell \cap \Lc(T)} \Lambda_{T_{\ell+1}}^2 (1+ 2 a_\alpha \delta_\alpha^2) \Vert u - \Pc_{V_\alpha} u \Vert^2 \Big),
   \end{align*}
     where for $S\subset T$, $$\Lambda_{S} = \prod_{\alpha \in S} \Lambda_\alpha(U_\alpha), %\lambda(U_\delta), \quad \text{with} \quad
 %\lambda(U_{\delta})=  \max_{v \in U^{min}_{\delta}(u)} \frac{\Vert I_{U_\delta} v \Vert_{\Hc_{\delta}} }{ \Vert v \Vert_{\Hc_{\delta}}} =
\quad \Lambda_\alpha(U_\alpha) =   \Vert I_{U_\alpha} \Vert_{U^{min}_\alpha(u)\to \Hc_\alpha},
  $$ 
\begin{align}
a_\alpha =\mathbf{1}_{\alpha\in \Lc(A)}\mathbf{1}_{\delta_\alpha\neq 0},\label{a_alpha}
\end{align} 
 and  
 \begin{align}\delta_\alpha =   \Vert I_{V_\alpha} - P_{V_\alpha} \Vert_{U^{min}_\alpha(u)\to \Hc_\alpha} \label{delta_alpha}
 \end{align}
 for $\alpha \in \Lc(T)$.
 Moreover, if $u\in V$, then  $\delta_\alpha = 0$ for all $\alpha \in \Lc(T)$, and $a_\alpha = 0$  for all $\alpha \in T$.
 
\end{lemma}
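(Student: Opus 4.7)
}

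The plan is to combine three ingredients: first, Proposition \ref{prop:oblique_projections_norms} to produce the $(1+\delta_{T_\ell}^2)$ factor and reduce from the oblique projection $\Ic_{T_\ell}$ to the orthogonal projection $\Pc_{T_\ell}$; second, a Pythagorean decomposition exploiting the fact that $\Pc_{T_\ell}=\prod_{\alpha\in T_\ell}\Pc_{U_\alpha}$ is a product of commuting orthogonal projections acting on disjoint variable groups; third, a case analysis on $\alpha\in T_\ell$ bounding each $\Vert u^{\ell+1}-\Pc_{U_\alpha}u^{\ell+1}\Vert$ by the appropriate quantity. Concretely, I would start from $u^\ell=\Ic_{T_\ell}u^{\ell+1}$ and use the tensor form of Proposition \ref{prop:oblique_projections_norms} (applied to $I_{U_{T_\ell}}$, $P_{U_{T_\ell}}$ on $\Hc_{t_\ell}$, tensored with $id_{t_\ell^c}$) to obtain $\Vert u^{\ell+1}-u^\ell\Vert^2\le(1+\delta_{T_\ell}^2)\Vert u^{\ell+1}-\Pc_{T_\ell}u^{\ell+1}\Vert^2$, and then use a short induction on the product of commuting orthogonal projections to get $\Vert u^{\ell+1}-\Pc_{T_\ell}u^{\ell+1}\Vert^2\le\sum_{\alpha\in T_\ell}\Vert u^{\ell+1}-\Pc_{U_\alpha}u^{\ell+1}\Vert^2$.

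Next I would split $T_\ell$ into three kinds of nodes. For $\alpha\in A_\ell\setminus\Lc(A)$, factor $\Ic_{T_{\ell+1}}=\Ic_{T_{\ell+1}\setminus S(\alpha)}\prod_{\beta\in S(\alpha)}\Ic_{U_\beta}$, so that $u^{\ell+1}=\Ic_{T_{\ell+1}\setminus S(\alpha)}u_\alpha$, and since $\Pc_{U_\alpha}$ acts on $\alpha$-variables while $\Ic_{T_{\ell+1}\setminus S(\alpha)}$ acts on $\alpha^c$-variables, they commute and
\[
\Vert u^{\ell+1}-\Pc_{U_\alpha}u^{\ell+1}\Vert=\Vert\Ic_{T_{\ell+1}\setminus S(\alpha)}(u_\alpha-\Pc_{U_\alpha}u_\alpha)\Vert\le \Lambda_{T_{\ell+1}\setminus S(\alpha)}\Vert u_\alpha-\Pc_{U_\alpha}u_\alpha\Vert,
\]
by the corollary to the product-operator proposition, after checking $U^{min}_\gamma(u_\alpha-\Pc_{U_\alpha}u_\alpha)\subset U^{min}_\gamma(u)$ (each $\gamma\in T_{\ell+1}\setminus S(\alpha)$ is disjoint from $\alpha$, so the $\alpha$-operators do not enlarge the $\gamma$-minimal subspace). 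For any leaf $\alpha\in T_\ell\cap\Lc(T)$, every $\gamma\in T_{\ell+1}$ is disjoint from $\alpha$, so the same commuting argument gives $\Vert u^{\ell+1}-\Pc_{U_\alpha}u^{\ell+1}\Vert\le\Lambda_{T_{\ell+1}}\Vert u-\Pc_{U_\alpha}u\Vert$.

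For a non-active leaf $\alpha\in T_\ell\cap\Lc(T)\setminus A$, $U_\alpha=V_\alpha$ so this last bound is already in the required form (with $a_\alpha=0$). For an active leaf $\alpha\in\Lc(A_\ell)$---the delicate case---$U_\alpha\subset V_\alpha$ yields the orthogonal decomposition
\[
\Vert u-\Pc_{U_\alpha}u\Vert^2=\Vert u-\Pc_{V_\alpha}u\Vert^2+\Vert(\id-\Pc_{U_\alpha})\Pc_{V_\alpha}u\Vert^2.
\]
Then write $(\id-\Pc_{U_\alpha})\Pc_{V_\alpha}u=(u_\alpha-\Pc_{U_\alpha}u_\alpha)+(\id-\Pc_{U_\alpha})(\Pc_{V_\alpha}-\Ic_{V_\alpha})u$, apply $\Vert a+b\Vert^2\le 2\Vert a\Vert^2+2\Vert b\Vert^2$, and bound $\Vert(\Pc_{V_\alpha}-\Ic_{V_\alpha})u\Vert\le\delta_\alpha\Vert u-\Pc_{V_\alpha}u\Vert$ via the tensor form of Proposition \ref{prop:interpolation_error} (namely Proposition \ref{prop:oblique_projections_norms}), to arrive at $\Vert u-\Pc_{U_\alpha}u\Vert^2\le(1+2\delta_\alpha^2)\Vert u-\Pc_{V_\alpha}u\Vert^2+2\Vert u_\alpha-\Pc_{U_\alpha}u_\alpha\Vert^2$. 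When $\delta_\alpha=0$ one has $u_\alpha=\Pc_{V_\alpha}u$ and the factor of $2$ disappears; this is exactly what the indicator $a_\alpha$ encodes. Summing the three cases over $T_\ell$ and multiplying by $(1+\delta_{T_\ell}^2)$ produces the stated inequality (noting $\Lambda_{T_{\ell+1}\setminus S(\alpha)}=\Lambda_{T_{\ell+1}}$ when $\alpha$ is a leaf).

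Finally, the postscript for $u\in V$ is just unpacking definitions: if $u\in V=\bigotimes_{\alpha\in\Lc(T)}V_\alpha$ then $U^{min}_\alpha(u)\subset V_\alpha$ for every leaf $\alpha$, so $I_{V_\alpha}$ and $P_{V_\alpha}$ coincide on $U^{min}_\alpha(u)$ and $\delta_\alpha=0$; the indicator $a_\alpha=\mathbf 1_{\alpha\in\Lc(A)}\mathbf 1_{\delta_\alpha\neq 0}$ is then $0$ throughout $T$. The hard part of the proof is the active-leaf case: one must simultaneously track the orthogonal/non-orthogonal splitting, the perturbation $\Ic_{V_\alpha}-\Pc_{V_\alpha}$ controlled by $\delta_\alpha$, and the fact that the constant collapses when $\delta_\alpha=0$, which is what the mild technical device $a_\alpha$ is designed to handle cleanly.
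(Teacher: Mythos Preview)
Your proof plan is correct and follows essentially the same route as the paper: the $(1+\delta_{T_\ell}^2)$ reduction via Proposition~\ref{prop:oblique_projections_norms}, the Pythagorean expansion $\Vert u^{\ell+1}-\Pc_{T_\ell}u^{\ell+1}\Vert^2\le\sum_{\alpha\in T_\ell}\Vert u^{\ell+1}-\Pc_{U_\alpha}u^{\ell+1}\Vert^2$, and the three-case analysis (non-leaf active, non-active leaf, active leaf) with the same $a_\alpha$-switch in the active-leaf case. Your explicit verification that $U^{min}_\gamma(u_\alpha-\Pc_{U_\alpha}u_\alpha)\subset U^{min}_\gamma(u)$ for $\gamma$ disjoint from $\alpha$ is a detail the paper leaves implicit but is indeed needed to invoke the $\Lambda_\gamma$ bounds.
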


	\begin{proof}
For all $1 \le \ell \le L$, we have 
 \begin{align*}
  \Vert u^{\ell+1} -    u^{\ell}  \Vert^2&=\Vert u^{\ell+1} -  \Ic_{T_\ell}  u^{\ell+1}  \Vert^2 = \Vert u^{\ell+1} -  \Pc_{T_\ell}  u^{\ell+1}  \Vert^2 + \Vert \Ic_{T_\ell} u^{\ell+1}  - \Pc_{T_\ell} u^{\ell+1}  \Vert^2
 \\
 &=\Vert u^{\ell+1} -  \Pc_{T_\ell}  u^{\ell+1}  \Vert^2 + \Vert (\Ic_{T_\ell} - \Pc_{T_\ell}) (u^{\ell+1}  -  \Pc_{T_\ell} u^{\ell+1})  \Vert^2
 \\
 &\le (1+\delta_{T_\ell}^2) \Vert u^{\ell+1} -  \Pc_{T_\ell}  u^{\ell+1}  \Vert^2
\le (1+\delta_{T_\ell}^2)   \sum_{\alpha\in T_\ell}  \Vert u^{\ell+1} - \Pc_{U_\alpha}  u^{\ell+1} \Vert^2. 
 \end{align*}
For $\alpha\in T_\ell \setminus \Lc(T) = A_\ell \setminus \Lc(T)$,  $$u^{\ell+1} = \Ic_{T_{\ell+1}} u= \prod_{\delta\in T_{\ell+1} \setminus S(\alpha)}  \Ic_{U_\delta}   \prod_{\beta \in  S(\alpha)}  \Ic_{U_\beta} u = \prod_{\delta\in T_{\ell+1} \setminus S(\alpha)}  \Ic_{U_\delta} u_\alpha, $$
 and since $\Pc_{U_\alpha}$ and $\prod_{\delta\in T_{\ell+1}\setminus S(\alpha)} \Ic_{U_\delta}$ commute, we have 
\begin{align*}
 \Vert u^{\ell+1} - \Pc_{U_\alpha}  u^{\ell+1} \Vert &=\Vert \prod_{\delta\in T_{\ell+1} \setminus S(\alpha)} \Ic_{U_\delta} (u_\alpha -  \Pc_{U_\alpha}  u_\alpha ) \Vert \le \Lambda_{T_{\ell+1} \setminus S(\alpha)}  \Vert  u_\alpha- \Pc_{U_\alpha}  u_\alpha \Vert.
  \end{align*}
Now for $\alpha \in T_\ell \cap \Lc(T)$, we have that $\Pc_{U_\alpha}$ and $\Ic_{T_{\ell+1}}$ commute, and therefore  
   \begin{align*}
 \Vert u^{\ell+1} -  \Pc_{U_\alpha}  u^{\ell+1}  \Vert & = \Vert \Ic_{T_{\ell+1}} (u- \Pc_{U_\alpha} u)
 \Vert \le \Lambda_{T_{\ell+1}} \Vert u- \Pc_{U_\alpha} u \Vert.
 \end{align*}
 If $\alpha \in T_\ell \setminus A_\ell$, we have $U_\alpha=V_\alpha$. If $\alpha \in A_\ell \cap \Lc(T)$, we have 
  \begin{align*}
 \Vert u- \Pc_{U_\alpha} u \Vert^2  =  \Vert u- \Pc_{U_\alpha}\Pc_{V_\alpha} u \Vert^2 &= 
 \Vert u- \Pc_{V_\alpha} u \Vert^2 + \Vert (id - \Pc_{U_\alpha}) \Pc_{V_\alpha} u\Vert^2,\end{align*}
 so that if  
 $\delta_\alpha = \Vert I_{V_\alpha} - P_{V_\alpha} \Vert_{U^{min}_\alpha(u)\to \Hc_\alpha} =0$, we have 
  $\Pc_{V_\alpha} u = \Ic_{V_\alpha} u = u_\alpha$ and 
  \begin{align}
 \Vert u- \Pc_{U_\alpha} u \Vert^2  \le   \Vert u- \Pc_{V_\alpha} u \Vert^2 + \Vert (id - \Pc_{U_\alpha}) u_\alpha\Vert^2,
\label{eq11}
 \end{align}
 and if $\delta_\alpha \neq 0$, we have 
  \begin{align}
  \Vert u- \Pc_{U_\alpha} u \Vert^2&\le 
   \Vert u- \Pc_{V_\alpha} u \Vert^2 + 2 \Vert (id - \Pc_{U_\alpha}) (\Pc_{V_\alpha} - \Ic_{V_\alpha}) u\Vert^2 + 2 
 \Vert (id - \Pc_{U_\alpha})   \Ic_{V_\alpha} u\Vert^2 \nonumber \\
  &=  \Vert u- \Pc_{V_\alpha} u \Vert^2 + 2 \Vert (id - \Pc_{U_\alpha}) (\Pc_{V_\alpha} - \Ic_{V_\alpha}) (u-\Pc_{V_\alpha}u)\Vert^2 \nonumber\\
  &\quad + 2 
 \Vert (id - \Pc_{U_\alpha}) u_\alpha\Vert^2\nonumber \\
  &\le (1 + 2 \delta_\alpha^2 )\Vert u -\Pc_{V_\alpha}u \Vert^2  + 2\Vert u_\alpha -  \Pc_{U_\alpha} u_\alpha\Vert^2,
  \label{eq12}
   \end{align}
   where we have used Proposition \ref{prop:oblique_projections_norms}.
   We conclude from \eqref{eq11} and \eqref{eq12} that if $\alpha \in A_\ell \cap \Lc(T)$, 
   \begin{align*}
  \Vert u- \Pc_{U_\alpha} u \Vert^2&\le (1 + 2 a_\alpha \delta_\alpha^2 )\Vert u -\Pc_{V_\alpha}u \Vert ^2 +(1+a_\alpha)\Vert u_\alpha -  \Pc_{U_\alpha} u_\alpha\Vert^2.
      \end{align*}
   Gathering the above results, we obtain 
   \begin{align*}
  & \Vert u^{\ell+1} - u^\ell \Vert^2 \le  (1+\delta_{T_\ell}^2) \Big(\sum_{\alpha \in A_\ell \setminus \Lc(T)}   \Lambda_{T_{\ell+1}\setminus S(\alpha)}^2 \Vert u_\alpha - \Pc_{U_\alpha} u_\alpha \Vert^2 \\
   &+ \sum_{\alpha \in A_\ell \cap \Lc(T)} (1+a_\alpha) \Lambda_{T_{\ell+1}}^2 \Vert u_\alpha - \Pc_{U_\alpha} u_\alpha \Vert^2 
   \\
   &+ \sum_{\alpha \in A_\ell \cap \Lc(T)} (1 + 2 a_\alpha \delta_\alpha^2 ) \Lambda_{T_{\ell+1}}^2 \Vert u - \Pc_{V_\alpha} u \Vert^2  + \sum_{\alpha\in T_\ell \setminus A_\ell} \Lambda_{T_{\ell+1}}^2 \Vert u - \Pc_{V_\alpha} u \Vert^2 \Big) ,
   \end{align*}
   which ends the proof.

\end{proof}

We now state the two main results about the proposed algorithm. 		
 \begin{theorem}\label{th:quasi-optim-ustar}
Assume that for all $\alpha \in A$, the subspace $U_\alpha$ is such that 
\begin{align}
\Vert u_\alpha - \Pc_{U_\alpha} u_\alpha \Vert^2 \le (1+ \tau^2) \min_{\rank_{\alpha}(v) \le r_\alpha} \Vert u_\alpha - v 
\Vert^2
\label{ass:quasi-opt-Ualpha}\end{align}
 holds with probability higher than $1-\eta$, for some $\tau \ge 1$. Then 
the approximation $u^\star \in \Tc_r^A(\Hc) \cap V$  is such that 
 \begin{align}
 \Vert u - u^\star \Vert^2 \le (1+\tau^2) C^2 \min_{v\in \Tc_r^A(\Hc)} \Vert u-v \Vert^2 +  
 \sum_{\alpha \in \Lc(T)} D_\alpha^2 \Vert u - \Pc_{V_\alpha} u \Vert^2
  \label{quasi-optim}\end{align}
holds with probability higher than $1-\#A \eta$, where $C$ is defined by 
   \begin{align}
 &  C^2 = (1+\delta(L-1)) \sum_{\ell=1}^{L} (1+\delta_{T_\ell}^2) \Lambda_{T_{\ell+1}}^2 \sum_{\alpha \in A_\ell}  (1+a_\alpha) \lambda_\alpha^2,   \label{constant_C}
 \end{align}
  with \begin{align}
  \lambda_\alpha =  \mathbf{1}_{\alpha \notin \Lc(A)}+ \mathbf{1}_{\alpha\in \Lc(A)} \Vert I_{V_\alpha} \Vert_{U^{min}_\alpha(u) \to \Hc_\alpha }
  \label{lambda_alpha}\end{align}
and $a_\alpha$ and $\delta_\alpha$ defined by \eqref{a_alpha} and \eqref{delta_alpha} respectively, 
and where $D_\alpha$  is defined by 
 \begin{align}
&   D_\alpha^2 = (1+\delta(L-1)) (1+\delta_{T_\ell}^2) \Lambda_{T_{\ell+1}}^2    (1+2a_\alpha \delta_\alpha^2) 
  \label{constant_Dalpha}
 \end{align}
  for $\alpha\in \Lc(T) \cap T_\ell$.
\end{theorem}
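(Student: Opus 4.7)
The plan is to combine the two deterministic estimates of Lemmas \ref{lem:bound_non_orthogonal} and \ref{lem:norm_diff}, which together bound $\Vert u-u^\star\Vert^2$ by a weighted sum of the node-wise errors $\Vert u_\alpha-\Pc_{U_\alpha}u_\alpha\Vert^2$ (over $\alpha\in A_\ell$) and the discretization errors $\Vert u-\Pc_{V_\alpha}u\Vert^2$ (over $\alpha\in T_\ell\cap\Lc(T)$), with the quasi-optimality assumption \eqref{ass:quasi-opt-Ualpha} invoked at each of the $\#A$ active nodes and a union bound to handle the probability. Proposition \ref{interpolation_ustar_in_TcrA} already yields $u^\star\in\Tc_r^A(\Hc)\cap V$, so only the error estimate remains.

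After chaining the two lemmas and invoking \eqref{ass:quasi-opt-Ualpha} at every $\alpha\in A$, I obtain, with probability at least $1-\#A\,\eta$, a bound in which each $\Vert u_\alpha-\Pc_{U_\alpha}u_\alpha\Vert^2$ is replaced by $(1+\tau^2)\min_{\rank_\alpha(v)\le r_\alpha}\Vert u_\alpha-v\Vert^2$. The central task is then to convert, for each $\alpha\in A$, this quantity into a multiple of $\min_{v\in\Tc_r^A(\Hc)}\Vert u-v\Vert^2$, i.e.\ to relate the best rank-$r_\alpha$ approximation of the \emph{interpolated} tensor $u_\alpha$ to the best rank-$r_\alpha$ approximation of $u$ itself.

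To carry out this conversion, I introduce a subspace $U_\alpha^\star\subset\Hc_\alpha$ of dimension $r_\alpha$ optimal for $u$, so that $\Vert u-\Pc_{U_\alpha^\star}u\Vert\le\min_{v\in\Tc_r^A(\Hc)}\Vert u-v\Vert$. For $\alpha\in A\setminus\Lc(A)$, where $u_\alpha=\prod_{\beta\in S(\alpha)}\Ic_{U_\beta}u$, I take the explicit candidate $v=\prod_{\beta\in S(\alpha)}\Ic_{U_\beta}\Pc_{U_\alpha^\star}u$. Since each $\Ic_{U_\beta}=I_{U_\beta}\otimes id$ with $\beta\subset\alpha$, the argument of Proposition \ref{prop:rank_bound} (whose proof only uses the tensor-product structure of the operator, not the fact that it is an orthogonal projection) gives $\rank_\alpha(v)\le r_\alpha$, and the identity $u_\alpha-v=\prod_{\beta\in S(\alpha)}\Ic_{U_\beta}(u-\Pc_{U_\alpha^\star}u)$ combined with the multiplicative norm bound on tensor-product operators yields $\Vert u_\alpha-v\Vert\le\Lambda_{S(\alpha)}\Vert u-\Pc_{U_\alpha^\star}u\Vert$. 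For $\alpha\in\Lc(A)$, where $u_\alpha=\Ic_{V_\alpha}u$, the analogous candidate $v=\Ic_{V_\alpha}\Pc_{U_\alpha^\star}u$ gives $\Vert u_\alpha-v\Vert\le\lambda_\alpha\Vert u-\Pc_{U_\alpha^\star}u\Vert$ by the same reasoning.

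Plugging these two bounds back into the deterministic estimate and using the multiplicativity $\Lambda_{T_{\ell+1}\setminus S(\alpha)}\cdot\Lambda_{S(\alpha)}=\Lambda_{T_{\ell+1}}$, which is valid because $S(\alpha)\subset T_{\ell+1}$, together with the conventions $\lambda_\alpha=1$ and $a_\alpha=0$ for interior $\alpha$, the sum over $\alpha\in A_\ell$ collapses to the shape of $C^2$ given in \eqref{constant_C}, while the leaf contributions already match $D_\alpha^2$ in \eqref{constant_Dalpha}. The main obstacle I anticipate is precisely the construction of the rank-controlled candidate $v$ so that its distance to $u_\alpha$ inherits exactly the interpolation-stability factor $\Lambda_{S(\alpha)}$ needed to combine multiplicatively with $\Lambda_{T_{\ell+1}\setminus S(\alpha)}$; it is this cancellation that produces the single $\Lambda_{T_{\ell+1}}$ appearing in the final constant $C^2$, and it hinges on the tensor-product form of the oblique interpolation operators $\Ic_{U_\beta}$.
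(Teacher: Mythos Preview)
Your proposal is correct and follows essentially the same approach as the paper: both proofs chain Lemmas \ref{lem:bound_non_orthogonal} and \ref{lem:norm_diff}, introduce an optimal subspace $U_\alpha^\star\subset U^{min}_\alpha(u)$ for $u$, and use the rank-preserving candidates $v=\Ic_{V_\alpha}\Pc_{U_\alpha^\star}u$ (for $\alpha\in\Lc(A)$) and $v=\prod_{\beta\in S(\alpha)}\Ic_{U_\beta}\Pc_{U_\alpha^\star}u$ (for interior $\alpha$) together with Proposition \ref{prop:rank_bound} to bound $\min_{\rank_\alpha(v)\le r_\alpha}\Vert u_\alpha-v\Vert$ by $\lambda_\alpha$ (resp.\ $\Lambda_{S(\alpha)}$) times $\Vert u-\Pc_{U_\alpha^\star}u\Vert$. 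Your observation that $\Lambda_{T_{\ell+1}\setminus S(\alpha)}\cdot\Lambda_{S(\alpha)}=\Lambda_{T_{\ell+1}}$ is exactly how the paper factors out $\Lambda_{T_{\ell+1}}^2$ in the final estimate, and the union bound over the $\#A$ events is identical.
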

\begin{proof}
For $\alpha\in A$, let $\widehat U_\alpha$ be a subspace  such that
 $$
  \Vert u_\alpha - \Pc_{\widehat U_\alpha} u_\alpha \Vert =\min_{\rank_{\alpha}(v) \le r_\alpha} \Vert u_\alpha - v \Vert,
 $$
and let $ U_\alpha^\star \subset U^{min}_\alpha(u) $ be a subspace such that
$$\Vert u - \Pc_{ U_\alpha^\star} u \Vert = \min_{\rank_\alpha(v)\le r_\alpha } \Vert u - v \Vert \le \min_{v\in \Tc_r^A(\Hc)} \Vert u - v \Vert  .$$
For $\alpha \in \Lc(A)$, we have $u_\alpha = \Ic_{V_\alpha} u$.  We know that
$\rank_\alpha(\Ic_{V_\alpha} \Pc_{ U_\alpha^\star} u)\le r_\alpha$ from Proposition \ref{prop:rank_bound}. By the optimality of $\widehat U_\alpha$, we obtain
$$\Vert  u_\alpha- \Pc_{\widehat U_\alpha}  u_\alpha \Vert \le \Vert u_\alpha - \Ic_{V_\alpha}  \Pc_{ U_\alpha^\star} u \Vert \le \Vert I_{V_\alpha} \Vert_{U^{min}_\alpha(u)\to \Hc_\alpha } \Vert u -    \Pc_{ U_\alpha^\star} u \Vert   .$$ 
Now consider $\alpha \in A \setminus \Lc(A)$. 
We know that   $\rank_\alpha(\prod_{\beta \in S(\alpha)} \Ic_{U_\beta} \Pc_{ U_\alpha^\star}u) \le \rank_\alpha( \Pc_{ U_\alpha^\star}u) \le r_\alpha$ from Proposition \ref{prop:rank_bound}. By the optimality of $\widehat U_\alpha$, we obtain 
\begin{align*}
 \Vert  u_\alpha- \Pc_{\widehat U_\alpha}  u_\alpha \Vert  & \le \Vert u_\alpha - \prod_{\beta \in S(\alpha)} \Ic_{U_\beta} \Pc_{ U_\alpha^\star}u \Vert =  \Vert  \prod_{\beta \in S(\alpha)} \Ic_{U_\beta} (u - \Pc_{ U_\alpha^\star}u )\Vert \\
 &\le \Lambda_{S(\alpha)}\Vert u - \Pc_{ U_\alpha^\star}u \Vert.
 \end{align*}
Then, using Lemma 
\ref{lem:norm_diff}  and assumption \eqref{ass:quasi-opt-Ualpha}, we obtain 
 \begin{align*}
  \Vert u^{\ell+1} - u^\ell \Vert^2 \le&   (1+\delta_{T_\ell}^2) \Lambda_{T_{\ell+1}}^2 \Big(  \sum_{\alpha \in A_\ell }  (1+a_\alpha) \lambda_\alpha^2 (1+ \tau^2) \min_{\rank_\alpha(v)\le r_\alpha} \Vert u - v \Vert^2  \\
  &+   \sum_{\alpha \in T_\ell \cap \Lc(T)}   (1+ 2a_\alpha\delta_\alpha^2) \Vert u - \Pc_{V_\alpha} u \Vert^2 \Big).
   \end{align*}
 Then, using Lemma \ref{lem:bound_non_orthogonal}, we obtain \eqref{quasi-optim}.
\end{proof}

 \begin{remark}
 Assume $u\in V$ (no discretization). Then $\delta_\alpha = 0$ and $\Vert I_{V_\alpha} \Vert_{U^{min}_\alpha(u)\to \Hc_\alpha}=1$ for all $\alpha \in \Lc(T)$, $a_\alpha = 0$ and $ \lambda_\alpha = 1$ for all $\alpha \in T$, $\Lambda_{T_{\ell}} = \Lambda_{A_\ell}$ and $\delta_{T_\ell} = \delta_{A_\ell}$ for all $\ell$. 
  Also, the constant 
 $C$  defined by \eqref{constant_C}  is such that 
  \begin{align}
 &  C^2 = (1+\delta(L-1)) \sum_{\ell=1}^{L} (1+\delta_{A_\ell}^2) \Lambda_{A_{\ell+1}}^2 \#A_\ell .\label{constant_C_nodiscretization}
 \end{align}
 Moreover, if $U_\alpha = U^{min}_\alpha(u)$ for all $\alpha$, then $\Lambda_{T_\ell} = \Lambda_{A_\ell}=1$ and $\delta_{T_\ell}=\delta_{A_\ell}=0$ for all $\ell$, which implies 
  \begin{align}
 &  C^2 = \#A .  \label{constant_C_nodiscretization_recovery}   
 \end{align}

\end{remark}

 \begin{theorem}\label{th:interpolation_relative_precision}
Let $\epsilon,{\tilde \epsilon} \ge 0$. Assume that  for all $\alpha \in A$,  the subspace $U_\alpha$ is such that 
 \begin{align}
 \Vert u_\alpha - \Pc_{U_\alpha} u_\alpha \Vert \le \epsilon \Vert u_\alpha \Vert \label{ass:precision_Ualpha}
\end{align}
 holds with   probability higher than $1- \eta$, and further assume that the subspaces $V_\alpha$, $\alpha \in \Lc(T)$, are such that 
 \begin{align}
 \Vert u - \Pc_{V_\alpha} u  \Vert \le \tilde \epsilon \Vert u \Vert. \label{ass:precision_Valpha}
\end{align}
 Then 
 the approximation $u^\star$ is such that
 $$
 \Vert u - u^\star \Vert^2 \le (C^2 \epsilon^2 + D^2 \tilde \epsilon^2) \Vert u \Vert^2 
 $$
 holds with  probability higher than $1-\#A \eta$, where $C$ is defined by \eqref{constant_C} and where $D^2 = \sum_{\alpha\in \Lc(T)} D_\alpha^2$,  with $D_\alpha$ defined by \eqref{constant_Dalpha}, is such that 
 \begin{align}
D^2 = (1+\delta(L-1)) \sum_{\ell=1}^{L} (1+\delta_{T_\ell}^2) \Lambda_{T_{\ell+1}}^2  \sum_{\alpha \in T_\ell \cap \Lc(T)}   (1+2a_\alpha \delta_\alpha^2)   \label{constant_D}.
 \end{align}
 \end{theorem}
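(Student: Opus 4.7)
The plan is to mirror the proof of Theorem~\ref{th:quasi-optim-ustar}, but substitute the absolute-precision hypothesis \eqref{ass:precision_Ualpha} for the quasi-optimality hypothesis used there. The starting point is Lemma~\ref{lem:norm_diff}, which already decomposes $\Vert u^{\ell+1}-u^\ell\Vert^2$ into a contribution involving $\Vert u_\alpha-\Pc_{U_\alpha}u_\alpha\Vert^2$ over $\alpha\in A_\ell$ and a contribution involving $\Vert u-\Pc_{V_\alpha}u\Vert^2$ over $\alpha\in T_\ell\cap \Lc(T)$.

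First, I would plug assumption \eqref{ass:precision_Ualpha} into the first type of term to replace $\Vert u_\alpha-\Pc_{U_\alpha}u_\alpha\Vert^2$ by $\epsilon^2\Vert u_\alpha\Vert^2$. Then I would bound $\Vert u_\alpha\Vert$ by $\Vert u\Vert$ using the tensor-product operator norm identities from Section~\ref{sec:operators}: for $\alpha\in A\setminus\Lc(A)$ we have $u_\alpha=\prod_{\beta\in S(\alpha)}\Ic_{U_\beta}u$, so that $\Vert u_\alpha\Vert\le \Lambda_{S(\alpha)}\Vert u\Vert$ with $\Lambda_{S(\alpha)}=\prod_{\beta\in S(\alpha)}\Lambda_\beta(U_\beta)$; for $\alpha\in\Lc(A)$ we have $u_\alpha=\Ic_{V_\alpha}u$, giving $\Vert u_\alpha\Vert\le \lambda_\alpha\Vert u\Vert$ with $\lambda_\alpha$ as in \eqref{lambda_alpha}. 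The key identity $\Lambda_{T_{\ell+1}\setminus S(\alpha)}\Lambda_{S(\alpha)}=\Lambda_{T_{\ell+1}}$ (valid since $S(\alpha)\subset T_{\ell+1}$ for a non-leaf $\alpha\in T_\ell$) then collapses the product $\Lambda_{T_{\ell+1}\setminus S(\alpha)}^2\Vert u_\alpha\Vert^2$ in Lemma~\ref{lem:norm_diff} to $\Lambda_{T_{\ell+1}}^2\lambda_\alpha^2\Vert u\Vert^2$, uniformly across leaf and non-leaf nodes. For the second type of term I directly invoke hypothesis \eqref{ass:precision_Valpha} to replace $\Vert u-\Pc_{V_\alpha}u\Vert^2$ by $\tilde\epsilon^2\Vert u\Vert^2$.

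Combining these substitutions in Lemma~\ref{lem:norm_diff} gives
\begin{equation*}
\Vert u^{\ell+1}-u^\ell\Vert^2 \le (1+\delta_{T_\ell}^2)\Lambda_{T_{\ell+1}}^2\Bigl(\epsilon^2\sum_{\alpha\in A_\ell}(1+a_\alpha)\lambda_\alpha^2 + \tilde\epsilon^2\sum_{\alpha\in T_\ell\cap\Lc(T)}(1+2a_\alpha\delta_\alpha^2)\Bigr)\Vert u\Vert^2.
\end{equation*}
Then I would apply Lemma~\ref{lem:bound_non_orthogonal} to sum over levels while absorbing the cross-term contributions into the overall factor $(1+\delta(L-1))$. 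Matching the resulting coefficient of $\epsilon^2\Vert u\Vert^2$ against \eqref{constant_C} gives exactly $C^2$, and matching the coefficient of $\tilde\epsilon^2\Vert u\Vert^2$ against \eqref{constant_Dalpha} summed over $\alpha\in\Lc(T)$ gives exactly $D^2$ as in \eqref{constant_D}.

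Finally, since the hypothesis \eqref{ass:precision_Ualpha} is assumed to hold with probability at least $1-\eta$ for each of the $\#A$ active nodes, a union bound ensures that all the required bounds hold simultaneously with probability at least $1-\#A\,\eta$, on which event the stated conclusion holds deterministically. No single step here is genuinely hard since the heavy lifting has already been done in Lemmas~\ref{lem:bound_non_orthogonal} and \ref{lem:norm_diff}; the only minor subtlety is the case analysis needed to verify that $\Lambda_{T_{\ell+1}\setminus S(\alpha)}\Vert u_\alpha\Vert\le \Lambda_{T_{\ell+1}}\lambda_\alpha\Vert u\Vert$ handles leaf and non-leaf active nodes uniformly, which is precisely the reason the definition \eqref{lambda_alpha} of $\lambda_\alpha$ treats these cases separately.
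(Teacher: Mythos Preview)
Your proposal is correct and follows essentially the same route as the paper's own proof: bound $\Vert u_\alpha\Vert$ by $\Lambda_{S(\alpha)}\Vert u\Vert$ or $\lambda_\alpha\Vert u\Vert$ according to whether $\alpha$ is a leaf, plug assumptions \eqref{ass:precision_Ualpha} and \eqref{ass:precision_Valpha} into Lemma~\ref{lem:norm_diff}, then sum over levels via Lemma~\ref{lem:bound_non_orthogonal}. You are in fact more explicit than the paper about the factorization $\Lambda_{T_{\ell+1}\setminus S(\alpha)}\Lambda_{S(\alpha)}=\Lambda_{T_{\ell+1}}$ and the union bound for the probability statement, both of which the paper leaves implicit.
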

 \begin{proof}
 We first note that for $\alpha \in A \setminus \Lc(A),$ we have 
 $
 \Vert u_\alpha \Vert \le \Lambda_{S(\alpha)} \Vert u \Vert.
 $ Also, for $\alpha \in \Lc(T)$, we have  $
 \Vert u_\alpha \Vert \le \lambda_\alpha \Vert u \Vert,
 $ with $\lambda_\alpha$ defined in \eqref{lambda_alpha}.
 Using Lemma 
\ref{lem:norm_diff} and assumptions \eqref{ass:precision_Ualpha} and \eqref{ass:precision_Valpha}, we then obtain  
 \begin{align*}
  \Vert u^{\ell+1} - u^\ell \Vert^2 \le   (1+\delta_{T_\ell}^2)  \Lambda_{T_{\ell+1}}^2 \Big(  \sum_{\alpha \in A_\ell}  (1+a_\alpha) \lambda_\alpha^2 \epsilon^2 \Vert u \Vert^2 &
\\
+ \sum_{\alpha \in T_\ell \cap \Lc(T)}  (1+2a_\alpha \delta_\alpha^2) \tilde \epsilon^2 \Vert u \Vert^2 \Big).&
   \end{align*}
Finally, we obtain the desired result by using 
Lemma \ref{lem:bound_non_orthogonal}.
 \end{proof}

 \begin{example}
 For the Tucker format described in Example \ref{ex:tucker}, the constants $C$ and $D$ are given by
 $$
 C^2 = (1+\delta_{T_1}^2) \sum_{\alpha \in \Lc(T)} (1+\mathbf{1}_{\delta_\alpha\neq 0}) \Vert I_{V_\alpha} \Vert_{U^{min}_\alpha(u)\to \Hc_\alpha}^2,$$
 $$ 
 D^2 = (1+\delta_{T_1}^2)\sum_{\alpha \in \Lc(T)} (1+ 2\delta_\alpha^2),
 $$
with 
 \begin{align*}
 \delta_{T_1} & =  { \Vert \bigotimes_{\alpha\in \Lc(T)} I_{U_\alpha} -  \bigotimes_{\alpha\in \Lc(T)} P_{U_\alpha}  \Vert_{U^{min}_D(u)\to \Hc}} =   { \Vert \bigotimes_{\alpha\in \Lc(T)} I_{U_\alpha} u-  \bigotimes_{\alpha\in \Lc(T)} P_{U_\alpha} u \Vert}/{\Vert u \Vert} %\le \Vert  \bigotimes_{\alpha\in \Lc(T)} I_{U_\alpha} \Vert_{\bigotimes_{\alpha\in \Lc(T)} U^{min}_\alpha(u)} 
 . 
 \end{align*}
If $u\in V$, then
$$
C  =  (1+ \delta_{T_1}^2)^{1/2} \sqrt{d}.
$$

 \end{example}
 
  \begin{example}
 For the tensor train format described in Example \ref{ex:tt}, 
 the constant $C$ and $D$ are given by
   \begin{align*}
   C^2 = (1+\delta(d-2)) \Big(&\sum_{\ell=1}^{d-2} (1+\delta_{T_\ell}^2) \Lambda_{\{1,\hdots,d-\ell-1\}}^2 \Vert I_{V_{d-\ell}} \Vert^2_{U^{min}_{d-\ell}(u)\to \Hc_{d-\ell}}  \\
   &+ (1+\delta_{T_{d-1}}^2)  (1+\mathbf{1}_{\delta_1\neq 0})\Vert I_{V_1} \Vert^2_{U^{min}_1(u)\to \Hc_1} \Big)  ,
 \end{align*}
 \begin{align*}
   D^2 = (1+\delta(d-2)) \Big(& \sum_{\ell=1}^{d-2} (1+\delta_{T_\ell}^2) \Lambda_{\{1,\hdots,d-\ell-1\}}^2\Vert I_{V_{d-\ell}} \Vert_{U^{min}_{d-\ell}(u)\to \Hc_{d-\ell}}^2  \\
   &+ (1+\delta_{T_{d-1}}^2)(2+2\delta_1^2)  \Big),
 \end{align*}
% where $\Lambda_{\{1,\hdots,d-\ell-1\}}=\Vert I_{U_{\{1,\hdots,d-\ell-1\}}}\Vert_{U^{min}_{\{1,\hdots,d-\ell-1\}}(u)\to \Hc_{\{1,\hdots,d-\ell-1\}}},$
%   
with 
 $$
 \delta_{T_\ell} = \Vert I_{U_{\{1,\hdots,d-\ell\}}}\otimes I_{V_{\{d-\ell+1\}}} -P_{U_{\{1,\hdots,d-\ell\}}}\otimes P_{V_{\{d-\ell+1\}}} \Vert_{U^{min}_{\{1,\hdots,d-\ell+1\}}(u^{\ell+1}) \to \Hc_{\{1,\hdots,d-\ell+1\}}}.
 $$
If $u \in V$, then 
$$
C^2 = (1+\delta(d-2)) \left(\sum_{\ell=1}^{d-2} (1+\delta_{T_\ell}^2) \Lambda_{\{1,\hdots,d-\ell-1\}}^2    + (1+\delta_{T_{d-1}}^2)   \right).
$$
 \end{example}
 \begin{example}
 For the tensor train Tucker format described in Example \ref{ex:ttt}, 
 the constant $C$ and $D$ are given by
   \begin{align*}
   &C^2 = (1+\delta(d-2)) \times \\ &\Big(\sum_{\ell=1}^{d-2} (1+\delta_{T_\ell}^2) \Lambda_{\{1,\hdots,d-\ell-1\}}^2 
   \Lambda_{\{d-\ell\}}^2\big(1+(1+\mathbf{1}_{\delta_{d-\ell+1}\neq 0})\Vert I_{V_{\{d-\ell+1\}}} \Vert_{U^{min}_{\{d-\ell+1\}} \to \Hc_{\{d-\ell+1\}}}  \big) \\
   &+ (1+\delta_{T_{d-1}}^2)  \big((1+\mathbf{1}_{\delta_1\neq 0})\Vert I_{V_1} \Vert^2_{U^{min}_1(u)\to \Hc_1}+(1+\mathbf{1}_{\delta_2\neq 0})\Vert I_{V_2} \Vert^2_{U^{min}_2(u)\to \Hc_2}\big) \Big)  ,
 \end{align*}
 \begin{align*}
   D^2 = (1+\delta(d-2)) \Big(& \sum_{\ell=1}^{d-2} (1+\delta_{T_\ell}^2) \Lambda_{\{1,\hdots,d-\ell-1\}}^2 \Lambda_{\{d-\ell\}}^2 (2+2\delta_{d-\ell+1}^2)  \\
   &+ (1+\delta_{T_{d-1}}^2)(1+2\delta_1^2)(1+2\delta_2^2)  \Big).
 \end{align*}
 %where $\Lambda_{\{1,\hdots,d-\ell-1\}}=\Vert I_{U_{\{1,\hdots,d-\ell-1\}}}\Vert_{U^{min}_{\{1,\hdots,d-\ell-1\}}(u)\to \Hc_{\{1,\hdots,d-\ell-1\}}},$
 %  and 
% where $$
% \delta_{T_\ell} = \Vert I_{U_{\{1,\hdots,d-\ell\}}}\otimes I_{U_{\{d-\ell+1\}}} -P_{U_{\{1,\hdots,d-\ell\}}}\otimes P_{U_{\{d-\ell+1\}}} \Vert_{U^{min}_{\{1,\hdots,d-\ell+1\}}(u^{\ell+1}) \to \Hc_{\{1,\hdots,d-\ell+1\}}}.
 %$$
If $u \in V$, then 
$$
C^2 = (1+\delta(d-2)) \left(\sum_{\ell=1}^{d-2} 2(1+\delta_{T_\ell}^2) \Lambda_{\{1,\hdots,d-\ell-1\}}^2 \Lambda_{\{d-\ell\}}^2    + (1+\delta_{T_{d-1}}^2)   \right).
$$
 \end{example}

\subsection{Complexity}\label{sec:complexity}
Here we analyse the complexity of the algorithm in terms of the number of evaluations of the function.  
%Lemma \ref{lem:commute_Il} implies that $u^\ell = \Ic_{T_\ell} u^{\ell+1} = \Ic_{T_\ell} u$ for  $1\le \ell < L $, and in particular
%$$
%u^\star = \Ic_{T_1} u.
%$$
%Therefore, there is no need in practice to compute the iterates $u^\ell$, $2\le \ell \le  L$. 
Evaluations of the function $u$ are required (i) for the computation 
of the subspaces $\{U_\alpha\}_{\alpha\in A}$ through empirical principal component analysis of the ${V_\alpha}$-valued functions $u_\alpha(\cdot,X_{\alpha^c})$, with {$V_\alpha$ a given approximation space} if $\alpha \in \Lc(A)$ or ${V_\alpha} = \bigotimes_{\beta\in S(\alpha)} U_\beta$ if $\alpha\in A\setminus \Lc(A)$,   
 and (ii) for the computation  of the final interpolation ${\Ic_{V_D}} u$.
\medskip

We then obtain the following result about the number of evaluations of the function required by the algorithm

% (estimation of $\alpha$-principal components of $u_\alpha$). 

\begin{proposition}
The total number of evaluations of $u$ required by the algorithm for computing an approximation $u^\star $ in the tensor format $\Tc_{r}^A(V)$ 
is 
\begin{align*}
M(A,r,{m},n) = \sum_{\alpha \in \Lc(A)} m_\alpha n_\alpha + \sum_{\alpha \in A \setminus \Lc(A)} 
m_\alpha \prod_{\beta \in S(\alpha)\cap A} r_\beta \prod_{\beta\in S(\alpha)\setminus A} n_\beta &
\\
+ \prod_{\beta \in S(D)\cap A} r_\beta \prod_{\beta\in S(D)\setminus A} n_\beta.&
\end{align*}
where $n=(n_\alpha)_{\alpha \in \Lc(T)}$, with $n_\alpha=\dim(V_\alpha),$ and ${m}=(m_\alpha)_{\alpha \in A}$, with $m_\alpha$ the number of samples of the $Z_{\alpha}$-valued random variable $u_\alpha(\cdot,X_{\alpha^c})$ used for computing $U_\alpha$. 

\end{proposition}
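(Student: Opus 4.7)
The plan is to decompose the total cost into contributions from each active node $\alpha \in A$ (where an empirical PCA is performed) plus the cost of the final interpolation $\Ic_{V_D}u$. At each active node, the algorithm evaluates the function on the product set $\Gamma_{V_\alpha} \times \{x_{\alpha^c}^k\}_{k=1}^{m_\alpha}$, because the samples needed to compute $U_\alpha$ are the partial evaluations $u_\alpha(\cdot, x_{\alpha^c}^k) = \Ic_{V_\alpha}u(\cdot, x_{\alpha^c}^k)$, and evaluating $\Ic_{V_\alpha}u(\cdot, x_{\alpha^c}^k)$ on the space $V_\alpha$ only requires knowing $u$ at the points of $\Gamma_{V_\alpha}$ (paired with $x_{\alpha^c}^k$).

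The key step is to identify $\#\Gamma_{V_\alpha} = \dim(V_\alpha)$ in the two relevant cases. If $\alpha \in \Lc(A)$, then $V_\alpha$ is the prescribed leaf approximation space, so $\dim(V_\alpha) = n_\alpha$ and the cost at this node is $m_\alpha n_\alpha$. If $\alpha \in A \setminus \Lc(A)$, then by construction $V_\alpha = \bigotimes_{\beta \in S(\alpha)} U_\beta$, where for each son $\beta \in S(\alpha)\cap A$ one has $\dim(U_\beta)=r_\beta$, and for each $\beta \in S(\alpha)\setminus A$ (which is necessarily a leaf by admissibility of $A$) one has $U_\beta = V_\beta$ with $\dim(V_\beta) = n_\beta$. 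Hence $\dim(V_\alpha) = \prod_{\beta \in S(\alpha)\cap A} r_\beta \prod_{\beta \in S(\alpha)\setminus A} n_\beta$, giving the stated cost $m_\alpha\dim(V_\alpha)$ at this node.

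For the final step, $u^\star = \Ic_{V_D}u$ is the interpolation on the product grid $\Gamma_{V_D} = \bigtimes_{\beta \in S(D)} \Gamma_{U_\beta}$, and the exact same reasoning as for a non-leaf active node (applied to the root, whose sons play the same role as those of any interior node) yields $\#\Gamma_{V_D} = \prod_{\beta \in S(D)\cap A} r_\beta \prod_{\beta \in S(D)\setminus A} n_\beta$ evaluations. Summing the per-node costs over $\Lc(A)$ and $A \setminus \Lc(A)$ and adding the final interpolation cost produces exactly $M(A,r,m,n)$.

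The only mildly delicate point to address is that the nested grid condition $\Gamma_{U_\alpha} \subset \Gamma_{V_\alpha}$ means some evaluations are in principle reusable across levels (a smaller grid at level $\ell+1$ is contained in a coarser product grid at level $\ell$), so the stated formula is really an \emph{upper bound} that naively sums per-node costs. I would state this explicitly and note that the bound becomes tight when no recycling of evaluations is performed, which is the convention adopted for the complexity analysis. No further technical obstacle arises; the proof is essentially bookkeeping built on the construction of $V_\alpha$ and $\Gamma_{V_\alpha}$ given in the algorithm description.
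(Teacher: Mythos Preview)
Your argument is correct and follows essentially the same bookkeeping as the paper: for each $\alpha\in A$ you count $m_\alpha\dim(V_\alpha)$ evaluations, splitting into the cases $\alpha\in\Lc(A)$ (where $\dim(V_\alpha)=n_\alpha$) and $\alpha\in A\setminus\Lc(A)$ (where $\dim(V_\alpha)=\prod_{\beta\in S(\alpha)\cap A}r_\beta\prod_{\beta\in S(\alpha)\setminus A}n_\beta$), and then add the cost $\dim(V_D)$ of the final interpolation. Your caveat about nested grids and possible reuse is extra caution the paper does not raise: since the complementary coordinates $x_{\alpha^c}^k$ are fresh random samples at each node, the evaluation sets at different nodes are (almost surely, or by design) disjoint, so the paper simply states the count as exact rather than as an upper bound.
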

\begin{proof}
For $\alpha \in A$, the function $u_\alpha $ is an interpolation of $u$ in $ Z_\alpha = V_\alpha$ if $\alpha \in \Lc(A)$, or  in $ Z_\alpha = \bigotimes_{\beta \in S(\alpha)} U_\beta = \left(\bigotimes_{\beta \in S(\alpha) \cap A} U_\beta\right) \otimes  \left(\bigotimes_{\beta \in S(\alpha) \setminus  A} V_\beta\right) $ if $\alpha \notin \Lc(A).$ Therefore, computing 
 $u_\alpha(\cdot,x_{\alpha^c}^k)$ for one realization $x^k_{\alpha^c}$ of $X_{\alpha^c}$ requires $\dim(V_\alpha) = n_\alpha$ evaluations of $u$ if $\alpha \in \Lc(A)$ or 
$\dim(\bigotimes_{\beta \in S(\alpha)} U_\beta) = \prod_{\beta \in S(\alpha)\cap A} r_\beta \prod_{\beta \in S(\alpha)\setminus A} n_\beta$ if $\alpha \notin \Lc(A)$. Finally, the computation of the interpolation $\Ic_{T_1} u = \Ic_{S(D)} u$ requires $\dim(\bigotimes_{\alpha \in S(D)} U_\alpha) = \prod_{\beta \in S(D)\cap A} r_\beta \prod_{\beta\in S(D)\setminus A} n_\beta$ evaluations of $u$.
\end{proof}
For computing a $r_\alpha$-dimensional subspace $U_\alpha$, the number of samples $m_\alpha$ of $u_\alpha(\cdot,X_{\alpha^c})$ has to be at least $r_\alpha$. 
\begin{corollary}\label{cor:eval_storage_fixedrank}
If the number of samples $m_\alpha =  r_\alpha$ for all $\alpha \in A$, then the number of  evaluations of the function required by the algorithm is 
$$
M(A,r,  r,n) =   \mathrm{storage}(\Tc_{r}^A(V)).
$$
\end{corollary}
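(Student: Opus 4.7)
The plan is to prove the identity by direct substitution and term-by-term matching against the storage formula from Proposition \ref{storage_complexity_TrA}. Since $M(A,r,m,n)$ is given as a closed-form sum, setting $m_\alpha = r_\alpha$ for every $\alpha \in A$ immediately yields
$$
M(A,r,r,n) = \sum_{\alpha \in \Lc(A)} r_\alpha n_\alpha + \sum_{\alpha \in A \setminus \Lc(A)}
r_\alpha \prod_{\beta \in S(\alpha)\cap A} r_\beta \prod_{\beta\in S(\alpha)\setminus A} n_\beta + \prod_{\beta \in S(D)\cap A} r_\beta \prod_{\beta\in S(D)\setminus A} n_\beta.
$$
The only structural step is to recognize that the isolated last term, which accounts for the final interpolation $\Ic_{V_D} u$ at the root, can be written as $r_D \prod_{\beta \in S(D)\cap A} r_\beta \prod_{\beta\in S(D)\setminus A} n_\beta$ using the convention $r_D = \rank_D(u) = 1$ established in Section \ref{sec:tree-based-formats}.

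With this rewriting the root contribution merges into the middle sum by replacing $A \setminus \Lc(A)$ with $(A \cup \{D\}) \setminus \Lc(A)$, and we obtain
$$
M(A,r,r,n) = \sum_{\alpha \in \Lc(A)} r_\alpha n_\alpha + \sum_{\alpha \in (A\cup \{D\}) \setminus \Lc(A)}
r_\alpha \prod_{\beta \in S(\alpha)\cap A} r_\beta \prod_{\beta\in S(\alpha)\setminus A} n_\beta,
$$
which is exactly the expression for $\mathrm{storage}(\Tc_r^A(V))$ given in Proposition \ref{storage_complexity_TrA}. This completes the comparison.

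There is essentially no obstacle here: the result is a one-line bookkeeping check once the two formulas are placed side by side. The only thing that requires care is verifying that the interpretation of the last term of $M$ as the $\alpha = D$ contribution is consistent, i.e.\ that $r_D = 1$ and that $D \notin \Lc(A)$ (which holds because $A \subset T\setminus\{D\}$, so the root is never an active leaf). Once this is noted, the proof is immediate, and no estimates or probabilistic arguments are required.
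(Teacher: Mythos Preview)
Your proof is correct and is precisely the intended argument: the paper states this corollary without proof because it is an immediate substitution of $m_\alpha = r_\alpha$ into the formula for $M(A,r,m,n)$, followed by absorbing the root term via $r_D=1$ to match the storage expression of Proposition \ref{storage_complexity_TrA}. There is nothing to add.
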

The above result 
 states that for a prescribed rank $r = (r_\alpha)_{\alpha\in A}$, 
the algorithm is able to construct an approximation of $u$ using a number os samples equal to the storage complexity of the tensor format $\Tc_r^A(V)$. 
\\\par

When using the algorithm with a prescribed tolerance $\epsilon$, the rank $r_\alpha$  is not fixed a priori but defined as the minimal integer such that 
the condition \eqref{ass:precision_Ualpha} is satisfied. Since samples of $u_\alpha(\cdot,X_{\alpha^c})$ belongs to the subspace $U^{min}_\alpha(u_\alpha) \subset Z_\alpha$
with dimension $\rank_\alpha(u_\alpha) \le \dim(Z_\alpha)$, the selected rank $r_\alpha$ is at most 
$ \dim(Z_\alpha)$. Therefore, by taking $m_\alpha = \dim(Z_\alpha)$ for all $\alpha\in A$, if we assume that the set of $m_\alpha $ samples of $u(\cdot,X_{\alpha^c})$ contains $\rank_{\alpha}(u_\alpha)$ linearly independent functions in $Z_\alpha$, then 
the algorithm is able to produce an approximation with arbitrary small tolerance $\epsilon.$
\begin{corollary}\label{cor:eval_storage}
If the number of samples $m_\alpha = \dim(Z_\alpha)$ for all $\alpha \in A$, then 
\begin{align*}
M(A,r,{m},n) = \sum_{\alpha \in \Lc(A)} n_\alpha^2 + \sum_{\alpha \in A \setminus \Lc(A)} 
\prod_{\beta \in S(\alpha)\cap A} r_\beta^2 \prod_{\beta\in S(\alpha)\setminus A} n_\beta^2 &
\\+ \prod_{\beta \in S(D)\cap A} r_\beta \prod_{\beta\in S(D)\setminus A} n_\beta.&
\end{align*}
\end{corollary}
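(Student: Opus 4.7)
The proof is essentially a direct substitution into the formula from the preceding Proposition. The plan is to start from
$$M(A,r,m,n) = \sum_{\alpha \in \Lc(A)} m_\alpha n_\alpha + \sum_{\alpha \in A \setminus \Lc(A)}
m_\alpha \prod_{\beta \in S(\alpha)\cap A} r_\beta \prod_{\beta\in S(\alpha)\setminus A} n_\beta + \prod_{\beta \in S(D)\cap A} r_\beta \prod_{\beta\in S(D)\setminus A} n_\beta,$$
and simply substitute the prescribed value $m_\alpha = \dim(Z_\alpha)$ for each $\alpha\in A$.

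The key step is to spell out $\dim(Z_\alpha)$ in the two cases given in the discussion preceding the statement. If $\alpha\in \Lc(A)$, then $Z_\alpha = V_\alpha$ and $\dim(Z_\alpha) = n_\alpha$, so each term $m_\alpha n_\alpha$ in the first sum becomes $n_\alpha^2$. If $\alpha \in A\setminus \Lc(A)$, then $Z_\alpha = \bigl(\bigotimes_{\beta \in S(\alpha)\cap A} U_\beta\bigr) \otimes \bigl(\bigotimes_{\beta\in S(\alpha)\setminus A} V_\beta\bigr)$, so that $\dim(Z_\alpha) = \prod_{\beta \in S(\alpha)\cap A} r_\beta \prod_{\beta\in S(\alpha)\setminus A} n_\beta$. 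Multiplying this by the factor $\prod_{\beta \in S(\alpha)\cap A} r_\beta \prod_{\beta\in S(\alpha)\setminus A} n_\beta$ already present in the second sum produces the squared factors $r_\beta^2$ and $n_\beta^2$ claimed in the statement. The root contribution $\prod_{\beta \in S(D)\cap A} r_\beta \prod_{\beta\in S(D)\setminus A} n_\beta$, coming from the final interpolation $\Ic_{V_D} u$, is unaffected.

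There is no real obstacle here; the argument is purely arithmetic once one has identified $\dim(Z_\alpha)$ correctly in the two cases. The only minor care needed is to remember that the final root-level term corresponds to the evaluation of $\Ic_{V_D} u$, which does not involve any $m_\alpha$ and therefore does not get rescaled, while the contributions from non-active children $\beta\in S(\alpha)\setminus A$ keep their dimensions $n_\beta$ rather than being replaced by ranks.
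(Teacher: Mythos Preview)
Your proposal is correct and is exactly the argument the paper intends: the corollary is stated without proof because it follows by direct substitution of $m_\alpha = \dim(Z_\alpha)$ into the formula of the preceding Proposition, using the identifications $\dim(Z_\alpha)=n_\alpha$ for $\alpha\in\Lc(A)$ and $\dim(Z_\alpha)=\prod_{\beta\in S(\alpha)\cap A} r_\beta \prod_{\beta\in S(\alpha)\setminus A} n_\beta$ for $\alpha\in A\setminus\Lc(A)$.
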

\begin{remark}
For numerical experiments, when working with prescribed tolerance, we will use 
$m_\alpha = \dim(Z_\alpha)$ for al $\alpha \in A$. 
\end{remark}

\section{Numerical examples}\label{sec:examples}

In all examples, we consider functions $u$ in the tensor space $L^2_\mu(\Xc)$, with $\Xc \subset \Rbb^d$, equipped with the natural norm $\Vert \cdot \Vert$ (see example \ref{ex:L2})\footnote{For the last example, $\Xc$ is a finite product set equipped with the uniform measure and $L^2_\mu(\Xc)$ then corresponds to the space of multidimensional arrays equipped with the canonical norm.}. 
For an approximation $u^\star$ provided by the algorithm, we estimate the relative error $\varepsilon(u^\star) = \Vert u - u^\star \Vert/\Vert u\Vert$ using Monte-Carlo integration. 
We denote by $M$ the total number of evaluations of the function $u$ required by the algorithm to provide an approximation $u^\star$, and by $S$ the storage complexity of the approximation $u^\star$. Since the algorithm uses random evaluations of the function $u$ (for the estimation of principal components), we run the algorithm several times and indicate confidence intervals of level $90\%$ for $\varepsilon(u^\star)$,  and also for $M$, $S$ and approximation ranks when these quantities are random.
\medskip

For the approximation with a prescribed $A$-rank, we use $m_\alpha = \gamma r_\alpha$ samples for the estimation of principal subspaces $U_\alpha$, $\alpha \in A$. If $\gamma=1$, then $M = S$ (see corollary \ref{cor:eval_storage_fixedrank}).  

For the approximation with a prescribed tolerance $\epsilon$, we use $m_\alpha = \dim(Z_\alpha) $ for all $\alpha \in A$ (see corollary \ref{cor:eval_storage} for the estimation of $M$). 
\medskip

In all examples except the last one, we use
polynomial approximation spaces $V_\nu = \Pbb_p(\Xc_\nu)$ over $\Xc_\nu\subset \Rbb$, $\nu \in D$, with the same polynomial degree $p$ in all dimensions. For each $\nu \in D$, we use an orthonormal polynomial basis of $V_\nu = \Pbb_p(\Xc_\nu)$ (Hermite polynomials for a Gaussian measure, Legendre polynomials for a uniform measure,...), and associated interpolation grids $\Gamma^\star_\nu$ selected in a set of 1000 random points (drawn from the measure $\mu_\nu$) by using the greedy algorithm described in Section \ref{sec:magicpoints}.

%Un In all examples except one, we consider  $\Xc  = [-1,1]^d $ equipped with the uniform measure $\mu$.
%, and consider approximations in the tensor train Tucker format  $\Tc_r^A(V)$ described in example \ref{ex:ttt}, or in the tensor train format $\Tc_r^A(V)$ described in example  \ref{ex:tt}, with a rank $r \le \rank_A(u)$.

%
%\subsection{Functions with finite ranks}
%
%Here, we consider functions $u$ with finite $\alpha$-ranks, i.e. $u$ belongs to the algebraic tensor space $\Hc_1\otimes_a \hdots \otimes_a \Hc_d$. 

\subsection{Henon-Heiles potential}

We consider $\Xc = \Rbb^d $ equipped with the standard Gaussian measure $\mu$ and the modified
Henon-Heiles potential \cite{kressner2014low}
$$
u(x_1,\hdots,x_d) = \frac{1}{2} \sum_{i=1}^d x_i^2 + \sigma_* \sum_{i=1}^{d-1} (x_ix_{i+1}^2-x_i^3) + \frac{\sigma_*^2}{16} \sum_{i=1}^{d-1} (x_i^2 + x_{i+1}^2)^2,
$$
with $\sigma_\star=0.2$. We consider approximation in the tensor train format  $\Tc_r^A(V)$ described in example \ref{ex:tt}. The function is such that $\rank_{\alpha}(u) = 3$ for all $\alpha\in A$.
We use a polynomial degree $p=4$, so that there is no discretization error, i.e. $u\in V$.

In Table \ref{tab:henonheiles-prescribed-rank}, 
we observe that the algorithm with a prescribed rank $r=(3,\hdots,3)$ is able to recover the function at very high precision with high probability with a number of samples equal to the storage complexity of the approximation (when $\gamma=1$), with no deterioration when the dimension $d$ increases from $5$ to $100$. The accuracy is slightly improved when   $\gamma=100$ but with a much higher number of evaluations of the function.

\begin{table}[h]
\caption{Henon-Heiles potential. Approximation with prescribed rank $r=(3,\hdots,3)$ and $\gamma =1$ and $\gamma=100$, for different values of $d$.}\label{tab:henonheiles-prescribed-rank}
\smallskip \centering$\begin{array}{|c|c|c|c|c|c|} \hline
  \multicolumn{6}{|c|}{\gamma=1}  \\ \hline
 d & 5 & 10 & 20 & 50 & 100  \\ \hline
\varepsilon(u^\star)\times 10^{14}  & [1.0;234.2] & [1.5;67.5] & [2.5;79.9] & [6.6;62.8] & [15.7;175.1]\\\hline
S=M  & 165  & 390  & 840  & 2190  & 4440 \\\hline\hline
  \multicolumn{6}{|c|}{\gamma=100}  \\ \hline
 d & 5 & 10 & 20 & 50 & 100  \\ \hline
\varepsilon(u^\star)\times 10^{14}   & [0.1;0.4] & [0.2;0.4] & [0.3;0.4] & [0.4;0.7] & [0.6;0.8]\\\hline
S  & 165  & 390  & 840  & 2190  & 4440 \\\hline
M  & 1515 & 3765 & 8265 & 21765 & 44265\\\hline
\end{array}
$
\end{table}
%
%In Table \ref{tab:henonheiles-prescribed-precision}, we verify that the algorithm with prescribed tolerance $\epsilon=10^{-14}$ provides an approximation with the desired precision for different values of $d$, with a number of samples between two or three times higher than the storage complexity. 
%
%\begin{table}[h]
%\caption{Henon-Heiles potential. Approximation with prescribed tolerance $\epsilon = 10^{-14}$ and $\gamma=1$ for different values of $d$.}\label{tab:henonheiles-prescribed-precision}
%\smallskip \centering$\begin{array}{|c|c|c|c|c|c|} \hline
%d & 5 & 10 & 20 & 50 & 100  \\ \hline
%\varepsilon(u^\star)\times 10^{15}   & [0.8;1.9] & [1.1;2.3] & [1.7;6.7] & [2.3;3.1] & [4.1;7.3]\\\hline
%S  & 165 & 390 & 840 & 2190 & 4440\\\hline
%M  & 715 & 1840 & 4090 & 10840 & 22090\\\hline
%\end{array}
%$
%\end{table}
%	
 
\subsection{Sine of a sum}

We consider $\Xc = [-1,1]^d $ equipped with the uniform measure
and the function
$$
u(x_1,\hdots,x_d) = \sin(x_1 + \hdots + x_d).
$$
We consider approximation in the tensor train Tucker format  $\Tc_r^A(V)$ described in example \ref{ex:ttt}. The function is such that $\rank_\alpha(u) = 2$ for all $\alpha \in A$. 
In Table \ref{tab:sinofasum-prescribed-rank}, 
we observe the behavior of the algorithm with a prescribed rank $r=(2,\hdots,2)$ for different polynomial degrees $p$ and different values of $d$. We observe a linear dependence of the complexity with respect to $d$.  

\begin{table}[h]
\caption{Sine of a sum. Approximation with prescribed rank $r=(2,\hdots,2)$ and $\gamma =1$. Relative error $\varepsilon(u^\star)$ and number of evaluations $M=S$ for different values of $d$ and $p$.}
\label{tab:sinofasum-prescribed-rank}
\smallskip \centering$\begin{array}{|c||c|c||c|c||c|c|} \hline
&   \multicolumn{2}{c||}{d=10}  & \multicolumn{2}{c||}{d=20} & \multicolumn{2}{c|}{d=50}   \\\hline
 & \varepsilon(u^\star) &  M & \varepsilon(u^\star) & M & \varepsilon(u^\star) & M \\\hline
p=3 & [3.2 ; 3.3] \times 10^{-1} & 148 & [ 5.2; 5.3] \times 10^{-1} & 308 & [8.8 ; 8.81] \times 10^{-1} & 788 \\\hline
p=5  & [1.29 ;1.31] \times 10^{-2}  & 188 & [ 2.3; 2.33] \times 10^{-2} & 388  &[5.2 ; 5.3] \times 10^{-2}  & 988 \\\hline
p=7  & [1.77 ; 1.81] \times 10^{-4} & 228 & [ 2.9 ; 3.0 ] \times 10^{-4} & 468  & [6.0 ; 6.1] \times 10^{-4}  & 1188 \\\hline
p=9  & [4.1 ; 4.2] \times 10^{-6}  & 268 & [ 6.4; 6.6] \times 10^{-6} & 548 & [1.27 ; 1.29] \times 10^{-5}  & 1388 \\\hline
p=11  & [2.17,2.2] \times 10^{-8} & 308 & [3.7 ;3.8 ] \times 10^{-8} & 628  & [8.2 ; 8.4] \times 10^{-8}  & 1588 \\\hline
p=13  & [7.6,7.7] \times 10^{-10} & 348 & [ 1.32; 1.24] \times 10^{-10} & 708  & [3.00 ; 3.04] \times 10^{-10}  & 1788 \\\hline
p=15  & [7.6,7.8] \times 10^{-12} & 388 & [1.0 ; 1.1] \times 10^{-12} & 788  & [1.7 ; 2.5] \times 10^{-12}  & 1988 \\\hline
p=17  & [4.1,13] \times 10^{-14} & 428 & [ 0.8 ; 4.9 ] \times 10^{-14} & 868  & [0.4 ; 6.7] \times 10^{-13}  & 2188 \\\hline
\end{array}
$
\end{table}

In Table \ref{tab:sinofasum-prescribed-precision}, we observe the behavior of the algorithm with prescribed tolerance $\epsilon = 10^{-12}$ and fixed polynomial degree $p=17$, for different values of $d$. For this value of $\epsilon$, the algorithm always provides an approximation with rank $(2,\hdots,2)$ with a fixed number of evaluations which is about ten times the storage complexity. 

\begin{table}[h]
\caption{Sine of a sum. Approximation with prescribed tolerance $\epsilon = 10^{-12}$, $p=17$ and $\gamma=1$ for different values of $d$.}
\label{tab:sinofasum-prescribed-precision}
\smallskip \centering$\begin{array}{|c|c|c||c|c|c||c|c|c|} \hline
   \multicolumn{3}{|c||}{d=10}  & \multicolumn{3}{c||}{d=20} & \multicolumn{3}{c|}{d=50}   \\\hline
  \varepsilon(u^\star) &  S & M & \varepsilon(u^\star) & S & M & \varepsilon(u^\star) & S & M \\\hline
  [3.7  ;  6.3]\times 10^{-13} & 428 & 3372 & [0.6   ; 1.3]\times 10^{-14} & 868 & 6772 &  [1.4;3.2]\times 10^{-14} & 2188 &16972 \\\hline
\end{array}
$
\end{table}

\subsection{Sum of bivariate functions}

We consider $\Xc = [-1,1]^d $ equipped with the uniform measure
and the function
\begin{align}
%u(x_1,\hdots,x_d) = \sum_{i=0}^{\lfloor (d-2)/k \rfloor} g(x_{1+ki} ,x_{2+ki}) \label{sumofbivariate}
%u(x_1,\hdots,x_d) = \sum_{(i,j) \in \Lambda}  g(x_{i} ,x_{j}) \label{sumofbivariate}
u(x_1,\hdots,x_d) = g(x_1,x_2) + g(x_3,x_4) + \hdots + g(x_{d-1},x_d) \label{sumofbivariate}
\end{align}
where $g$ is a bivariate function, and $d=10$.  We consider approximation in the tensor train Tucker format  $\Tc_r^A(V)$ described in example \ref{ex:ttt}. The function is such that $\rank_{\{\nu\}}(u) = \rank(g)+1$ for all $\nu \in D $, and 
$\rank_{\{1,\hdots,\nu\}}(u)=2$ if $\nu$ is even, or $\rank_{\{1,\hdots,\nu\}}(u)=\rank(g)+1$ if $\nu$ is odd. Here, we use the algorithm we a prescribed tolerance $\epsilon$.

We first consider the function  
$g(y,z) = \sum_{j=0}^3 y^j z^j$ whose rank is $4$ and 
we use polynomial spaces of degree $p=5$,  so that there is no discretization error. 
We observe in Table \ref{tab:manu-poly} the behavior of the algorithm for decreasing values of $\epsilon$. For $\epsilon= 10^{-4}$, the algorithm always provides the solution at almost machine precision, with an exact recovery of the rank of the function $u$. We observe that increasing $\gamma$ (i.e. the number of evaluations for the estimation of principal components)  allows us to obtain a more accurate approximation for a given prescribed tolerance but with  a significant increase in the number of evaluations. 

\begin{table}[h]\caption{Sum of bivariate functions \eqref{sumofbivariate} with $g(y,z) = \sum_{j=0}^3 y^j z^j$. Approximation with prescribed $\epsilon$, degree $p=5$, and different $\gamma$. Confidence intervals for relative error $\varepsilon(u^\star)$, storage complexity $S$ and number of evaluations $M$.}\label{tab:manu-poly}
\smallskip \centering 
$\begin{array}{|c|c|c|c|}
 \hline
& \multicolumn{3}{|c|}{\gamma=1}   \\\hline
\epsilon & \varepsilon(u^\star)   & M & S     \\\hline
%10^{-1} & 2.8\,10^{-1} &  521 &  192   & 2.0\,10^{-1} &  5484 & 212 &  1.9\,10^{-1} & 54804 & 212   \\\hline
%10^{-2} & 1.5\,10^{-1} & 1034&  373 & 1.1\,10^{-2} & 16412 & 500& 1.1\,10^{-2} & 164012 &  500 \\\hline
%10^{-3} & 2.6\,10^{-2} & 2088 &  560&  2.7\,10^{-15} & 20736 &  560& 9.5\,10^{-15}& 207216] &  560   \\\hline
%10^{-4} & 7.8\,10^{-15} & 2088 & 560 &  2.7\,10^{-15}& 20736 & 560& 1.1\,10^{-14} & 207216 &  560 \\\hline
10^{-1} & [1.4\,\times10^{-1};2.8\,\times10^{-1}] & [444,521] & [160, 192]    \\\hline
10^{-2} & [0.8\,\times 10^{-1};1.5\,\times 10^{-1}] & [918,1034] & [345, 373]  \\\hline
10^{-3} & [1.7\,10^{-15};2.6\, \times 10^{-2}] & [1916,2088] & [530, 560]   \\\hline
10^{-4} & [1.6\,\times 10^{-15} ;7.8\,\times 10^{-15} ]& 2088 & 560    \\\hline\hline
& \multicolumn{3}{|c|}{\gamma=10} \\\hline
\epsilon & \varepsilon(u^\star)   & M & S     \\\hline
 10^{-1} &[1.7\,10^{-1};2.0\,10^{-1}] & [5364,5484] & [202, 212]  \\\hline
10^{-2} & [0.9\,\times10^{-2} ;1.1\,\times10^{-2} ]& [16132,16412] & [486, 500] \\\hline
10^{-3}  &  [2.1\,\times 10^{-15} ;2.7\,\times 10^{-15} ]& 20736 & 560  \\\hline
10^{-4}  &  [1.7\,\times 10^{-15} ;2.7\,\times 10^{-15} ] & 20736 & 560  \\\hline
\end{array}
$
\end{table}

We now consider the function  
$g(y,z) = \exp^{-\frac{1}{8}(y-z)^2}$ 
with infinite rank. 
We observe in Tables \ref{tab:manu-exp} and \ref{tab:manu-exp-pepsilon} the behavior of the algorithm for decreasing values of $\epsilon$, and for a fixed polynomial degree $p=10$ in Table \ref{tab:manu-exp}, and an adaptive polynomial degree $p(\epsilon) = \log_{10}(\epsilon^{-1})$ in Table \ref{tab:manu-exp-pepsilon}. We observe that the relative error of the obtained approximation  is below the prescribed tolerance with high probability. Also, we clearly see the interest of adapting the discretization to the desired precision, which yields a lower complexity for small or moderate $\epsilon$. 

\begin{table}[h]
\caption{Sum of bivariate functions \eqref{sumofbivariate} with $g(y,z) = \exp^{-\frac{1}{8}(y-z)^2}$. Approximation with prescribed $\epsilon$, degree $p=10$, $\gamma=1$. Confidence intervals for relative error $\varepsilon(u^\star)$, storage complexity $S$ and number of evaluations $M$.}\label{tab:manu-exp}
\smallskip \centering $\begin{array}{|c|c|c|c|} \hline
\epsilon & \varepsilon(u^\star)   & M & S   \\
\hline
10^{-1} & [3.8\,10^{-2};5.3\,10^{-2}] & [1219,1222] & [119, 131]  \\\hline
10^{-2} & [1.8\,10^{-2};3.8\,10^{-2}] & [1282,1294] & [252, 256]  \\\hline
10^{-3} & [1.2\,10^{-4};2.0\,10^{-3}] & [1813,1876] & [507, 519]  \\\hline
10^{-4} & [1.2\,10^{-4};1.6\,10^{-4}] & [1876,1876] & [519, 519]  \\\hline
10^{-5} & [1.6\,10^{-5};6.9\,10^{-5}] & [3275,4063] & [821, 935]  \\\hline
10^{-6} & [1.8\,10^{-6};7.1\,10^{-6}] & [4135,4410] & [975, 995]  \\\hline
10^{-7} & [3.1\,10^{-8};2.5\,10^{-6}] & [4685,4960] & [1015, 1035]  \\\hline
10^{-8} & [2.7\,10^{-8};1.3\,10^{-7}] & [5048,6120] & [1056, 1164]  \\\hline
10^{-9} & [1.2\,10^{-8};4.8\,10^{-8}] & [9671,11595] & [1476, 1578]  \\\hline
10^{-10} & [1.9\,10^{-10};1.5\,10^{-8}] & [11647,13117] & [1603, 1659]   \\\hline
\end{array}
$
\end{table}
\begin{table}[h!]
\caption{Sum of bivariate functions \eqref{sumofbivariate} with $g(y,z) = \exp^{-\frac{1}{8}(y-z)^2}$. Approximation with prescribed $\epsilon$, degree $p(\epsilon)=\log_{10}(\epsilon^{-1})$, $\gamma=1$. Confidence intervals for relative error $\varepsilon(u^\star)$, storage complexity $S$ and number of evaluations $M$.}\label{tab:manu-exp-pepsilon}
\smallskip \centering $\begin{array}{|c|c|c|c|} \hline
\epsilon & \varepsilon(u^\star)   & M & S   \\
\hline
10^{-1} & [1.4\,10^{-1};3.3\,10^{-1}] & [52,70] & [32, 42]  \\\hline
10^{-2} & [2.9\,10^{-2};4.2\,10^{-2}] & [162,184] & [88, 100]  \\\hline
10^{-3} & [3.2\,10^{-3};1.1\,10^{-2}] & [598,778] & [258, 292]  \\\hline
10^{-4} & [1.7\,10^{-4};2.5\,10^{-4}] & [916,916] & [339, 339]  \\\hline
10^{-5} & [5.7\,10^{-5};1.5\,10^{-4}] & [2056,2759] & [562, 622]  \\\hline
10^{-6} & [1.1\,10^{-6};3.5\,10^{-5}] & [3190,3465] & [758, 778]  \\\hline
10^{-7} & [6.9\,10^{-8};2.1\,10^{-7}] & [4390,4390] & [885, 885]  \\\hline
10^{-8} & [3.2\,10^{-8};1.2\,10^{-7}] & [4560,5319] & [935, 998]  \\\hline
10^{-9} & [8.3\,10^{-9};4.1\,10^{-8}] & [9415,11385] & [1396, 1509]  \\\hline
10^{-10} & [1.6\,10^{-10};1.7\,10^{-8}] & [11647,12382] & [1603, 1631] \\\hline
\end{array}
$
\end{table}

\subsection{Borehole function}

We here consider the function 
\begin{align*}
f(Y_1,\hdots,Y_8)= \frac{2\pi Y_3(Y_4-Y_6)}{(Y_2-\log(Y_1)) (1+\frac{2 Y_7 Y_3}{(Y_2-\log(Y_1))Y_1^2  Y_8}+
    \frac{Y_3}{Y_5})}
\end{align*}
which models the water flow through a borehole as a function of $8$ independent random variables $Y_1 \sim \Nc(0.1,0.0161812) $, 
$Y_2 \sim \Nc(7.71, 1.0056)$, 
$Y_3 \sim \Uc(63070,115600)$,
$Y_4\sim \Uc(990,1110)$,
$Y_5\sim \Uc(63.1,116)$,
$Y_6\sim  \Uc(700,820)$,
$Y_7\sim \Uc(1120,1680)$,
$Y_8\sim \Uc(9855,12045)$.
We then consider the function 
$$
u(x_1,\hdots,x_d) = f(g_1(x_1),\hdots,g_8(x_8)),$$
where $g_\nu$ are functions such that $Y_\nu = g_\nu(X_\nu)$, with $X_\nu  \sim \Nc(0,1)$ for $\nu\in \{1,2\}$, and $X_\nu\sim \Uc(-1,1)$ for $\nu\in \{3,\hdots,8\}.$ Function $u$ is then defined on  $\Xc= \Rbb^2 \times [-1,1]^{6}.$ 
We use polynomial approximation spaces $V_\nu = \Pbb_p(\Xc_\nu)$, $\nu \in D$.  
%For each $\nu$, we use an orthonormal polynomial basis of $V_\nu$ (Hermite polynomials for $\nu\in\{1,2\}$, and Legendre polynomials for $\nu\in \{3,\hdots,8\}$), and associated interpolation grids $\Gamma^\star_\nu$ selected in a set of 1000 random points by the greedy algorithm described in Section \ref{sec:magicpoints}. 
 We consider approximation in the tensor train Tucker format  $\Tc_r^A(V)$ described in example \ref{ex:ttt}.

In Table \ref{tab:borehole-prescribed-rank-ttt}, we observe the behavior of the algorithm with prescribed ranks $(r,\hdots,r)$ and fixed degree $p=10$. We observe a very fast convergence of the approximation with the rank. Increasing $\gamma$ (i.e. the number of evaluations for the estimation of principal components) allows us to improve the accuracy for a given rank but it we look at the error as a function of the complexity $M$, $\gamma=1$ is much better than $\gamma=100$. 
\begin{table}[h]
\caption{Borehole function. Approximation in tensor train Tucker format with prescribed rank $(r,\hdots,r)$, fixed degree $p=10$. Relative error $\varepsilon(u^\star)$ and {storage complexity $S$} for different values of $r$ and $\gamma$.}\label{tab:borehole-prescribed-rank-ttt}
\smallskip \centering$\begin{array}{|c|c|c|c|} \hline
&  &  \gamma=1  & \gamma=100  \\\hline
r & {S}  &\varepsilon(u^\star) & \varepsilon(u^\star)   \\
\hline
1 & 88 & [2.4\,10^{-2};2.7\,10^{-2}] & [2.3\,10^{-2};2.4\,10^{-2}]  \\\hline
2 & 308 & [1.4\,10^{-3};1.4\,10^{-2}] & [4.1\,10^{-4};5.0\,10^{-4}]\\\hline  
3 & 660 & [1.8\,10^{-5};4.9\,10^{-5}] & [9.9\,10^{-6};2.3\,10^{-5}]  \\\hline
4 & 1144 & [2.9\,10^{-6};3.5\,10^{-6}] & [8.8\,10^{-7};1.9\,10^{-6}]  \\\hline
5 & 1760 & [5.2\,10^{-7};6.1\,10^{-7}] & [1.8\,10^{-7};7.4\,10^{-7}]  \\\hline
6 & 2508 & [9.0\,10^{-8};1.3\,10^{-7}] & [1.9\,10^{-8};5.2\,10^{-8}]  \\\hline
7 & 3388 & [5.7\,10^{-8};9.2\,10^{-8}] & [5.1\,10^{-9};1.1\,10^{-8}]  \\\hline
8 & 4400 & [1.6\,10^{-9};5.1\,10^{-9}] & [4.3\,10^{-10};2.0\,10^{-9}]  \\\hline
9 & 5544 & [1.5\,10^{-9};2.4\,10^{-9}] & [3.1\,10^{-10};8.6\,10^{-10}]  \\\hline
10 & 6820 & [5.5\,10^{-11};1.1\,10^{-10}] & [4.3\,10^{-11};7.6\,10^{-11}] \\\hline
\end{array}
$
\end{table}

%In Table \ref{}
%\begin{table}[h]\label{tab:borehole-prescribed-tolerance-ttt}
%\caption{Borehole function. Approximation in tensor train Tucker format with prescribed $\epsilon$, $p=10$, $\gamma=1$. Confidence intervals for relative error $\varepsilon(u^\star)$, storage complexity $S$ and number of evaluations $M$ for different $\epsilon$, and average ranks.}
%\smallskip \centering$\begin{array}{|c|c|c|c|c|} \hline
%\epsilon & \varepsilon(u^\star)  & M & S  &  [r_{\{1\}},\hdots,r_{\{d\}},r_{\{1,2\}},\hdots , r_{\{1,\hdots,d-1\}}]  \\\hline
%10^{-1} & [2.2;4.4] \times 10^{-2}& [975,975] & [95, 95]  &[1,1,1,1,1,1,1,1,1,1,1,1,1,1] \\\hline
%10^{-2} & [1.7;2.6] \times 10^{-3}& [996,996] & [126, 126]  & [1,1,1,2,1,1,1,1,1,2,1,2,1,1] \\\hline
%10^{-3}& [0.5;1.5] \times 10^{-3}& [1008,1058] & [150, 189]  & [2,2,1,2,2,1,1,2,1,2,2,2,1,1] \\\hline
%10^{-4} & [1.4;3.9] \times 10^{-5}& [1096,1096] & [231, 231]  &[2,2,2,3,3,2,2,2,1,2,2,2,2,2] \\\hline
%10^{-5} & [0.6;3.2] \times 10^{-5}& [1096,1152] & [231, 253]  &[2,2,2,4,4,2,2,2,1,2,2,2,2,2] \\\hline
%10^{-6} & [3.1;4.4]\times 10^{-6} & [1184,1240] & [283, 309]  & [3,2,3,4,4,2,2,2,2,2,2,2,2,2] \\\hline
%10^{-7} & [0.9;4.1]\times 10^{-7} & [1615,1723] & [452, 492]  & [3,3,4,6,5,3,3,3,2,2,3,2,3,2] \\\hline
%10^{-8} & [0.1;1.0]\times 10^{-7} & [1770,1887] & [530, 556]  & [4,3,4,7,6,3,3,3,2,2,3,2,3,3] \\\hline
%10^{-9} & [0.2;6.3]\times 10^{-8} & [1887,2050] & [556, 602]  & [4,4,5,8,6,3,3,3,2,2,3,2,3,3] \\\hline
%10^{-10} & [0.5;4.2] \times 10^{-9}& [2479,2825] & [759, 823] &[5,4,6,9,7,4,3,4,2,2,3,2,3,3] \\\hline
%\end{array}
%$
%\end{table}

In Table \ref{tab:borehole-prescribed-tolerance-ttt-adaptive-p}, 
we observe   the behavior of the algorithm for decreasing values of $\epsilon$, and for  an adaptive polynomial degree $p(\epsilon) = \log_{10}(\epsilon^{-1})$. We observe that for all $\epsilon,$ the relative error of the obtained approximation  is below $\epsilon$ with high probability. We note  that the required number of evaluations $M$ is about 2 to 4 times the storage complexity.

\begin{table}[h!]
\caption{Borehole function. Approximation in tensor train Tucker format with prescribed $\epsilon$, $p(\epsilon)=\log_{10}(\epsilon^{-1})$, $\gamma=1$. Confidence intervals for relative error $\varepsilon(u^\star)$, storage complexity $S$ and number of evaluations $M$ for different $\epsilon$, and average ranks.}\label{tab:borehole-prescribed-tolerance-ttt-adaptive-p}
\smallskip \centering$\begin{array}{|c|c|c|c|c|} \hline
\epsilon & \varepsilon(u^\star) & M & S   &  [r_{\{1\}},\hdots,r_{\{d\}},r_{\{1,2\}},\hdots , r_{\{1,\hdots,d-1\}}]  \\
\hline
10^{-1} & [1.8;2.7]\times 10^{-1} & [39,39] & [23, 23] &[1,1,1,1,1,1,1,1,1,1,1,1,1,1] \\\hline
10^{-2} & [0.3;4.0]\times 10^{-2} & [88,100] & [41, 46] & [1,1,1,1,1,1,1,1,1,2,1,2,1,1] \\\hline
10^{-3} & [0.8;1.9]\times 10^{-3} & [159,186] & [61, 78]  &[2,1,1,2,2,1,1,1,1,2,2,2,1,1]\\\hline
10^{-4} & [2.5;5.6]\times 10^{-5} & [328,328] & [141, 141] & [2,2,2,3,3,2,2,2,1,2,2,2,2,2] \\\hline
10^{-5} & [0.6;1.6] \times 10^{-5}& [444,472] & [166, 178] &[2,2,2,4,4,2,2,2,1,2,2,2,2,2] \\\hline
10^{-6} & [3.1;5.7] \times 10^{-6}& [596,664] & [204, 241] &[3,2,2,4,5,3,2,2,2,2,2,2,2,2] \\\hline
10^{-7} & [1.0;6.3] \times 10^{-7}& [1042,1267] & [374, 429] &[4,3,4,6,5,3,3,3,2,2,3,2,2,2] \\\hline
10^{-8} & [1.1;7.1] \times 10^{-8}& [1567,1567] & [512, 512] &[4,3,4,7,6,3,3,3,2,2,3,2,3,3] \\\hline
10^{-9} & [0.2;4.9] \times 10^{-8}& [1719,1854] & [534, 560]  &[4,4,4,8,6,3,3,3,2,2,3,2,3,3]\\\hline
10^{-10} & [0.3;1.9]\times 10^{-9} & [2482,2828] & [774, 838] &[5,4,6,10,7,4,3,3,2,2,3,2,3,3]\\\hline
\end{array}
$
\end{table}

 \subsection{{Tensorization of a univariate function}}
   
We consider the approximation of the univariate function $f : [0,1] \to \Rbb$ {using tensorization of functions} \cite{khoromskij2011dlog,oseledets2011algebraic}. We denote by $ f_N$ the piecewise constant approximation of $f$ on a uniform partition
$0 = t_0 \le t_1 \le \hdots \le t_{N} = 1$ with $N=2^d$ elements, such that $f_N(i h) = f(ih)$ for $0\le i \le N$ and $h=N^{-1}=2^{-d}.$ We denote by 
$v\in \mathbb{R}^N$ the vector with components $v(i) = f(ih)$, $0\le i \le N-1$. The vector $v \in \mathbb{R}^{2^d}$ can be identified with an order-$d$ tensor $u \in  \mathcal{H} = \mathbb{R}^2\otimes \hdots \otimes  \mathbb{R}^2$  such that 
$$
u(i_1,\hdots,i_d) = v(i), \quad i=\sum_{k=1}^{d} i_{k} 2^{d-k},
$$
where $(i_{1},\hdots,i_{d})\in \{0,1\}^d = \Xc$ is the binary representation of the integer $i\in \{0,\hdots,2^d-1\}$. The set $\Xc$ is equipped with the uniform measure $\mu$. Then we consider approximation of the tensor $u$ in tensor train format. The algorithm evaluates the tensor $u$ at some selected entries $(i_1,\hdots,i_d)$, which corresponds to evaluating the function $f$ at some particular points $t_i$. 

In this finite-dimensional setting, we consider $V = \mathcal{H}.$ 
%For each $\nu\in D$, we use an orthonormal polynomial basis with respect to the discrete uniform measure on $\{0,1\}$, composed by the two functions $ \varphi^\nu_1(i_\nu) = 1$ and 
%$\varphi^\nu_2(i_\nu) = 4(i_\nu-\frac{1}{2})$.   
In all examples, we consider $d=40,$ and $N=2^d \approx 10^{12}$. This corresponds to a storage complexity of one terabyte for the standard representation of $f_N$ as a vector $v$ of size $N$. 

We observe in Tables \ref{tab:square-prescribed-tolerance-tt} and \ref{tab:sqrt-prescribed-tolerance-tt} the behavior of the algorithm with prescribed tolerance $\epsilon$ applied to the functions $f(t)=t^2$ and $f(t)=t^{1/2}$ respectively. We indicate relative errors in $\ell^2$ and $\ell^\infty$ norms between the tensor $u$ and the approximation $u^\star$. Let us recall that for $f(t)=t^\alpha$, the approximation error $\Vert f - f_N \Vert_{L^\infty} = O(N^{-\beta}) = O(2^{-d\beta})$ with $\beta = \min\{1,\alpha\}$, which is an exponential convergence with respect to $d$.
For the function $f(t) = t^2$, we observe that the relative error in $\ell^2$ norm is below the prescribed tolerance with high probability. For the function $f(t) = t^{1/2}$, the probability of obtaining a relative error in $\ell^2$ norm below the prescribed tolerance decreases with $\epsilon$ but the ratio between the true relative error and the prescribed tolerance remains relatively small (below 100). 
We note that for $f(t)=t^2$, the approximation ranks are bounded by $3$, which is the effective rank of $f_N$. For $f(t)=t^{1/2}$, the approximation ranks slowly increase with $\epsilon^{-1}$.

In both cases, we observe a very good behavior of the algorithm, which requires a number of evaluations which scales as $\log(\epsilon^{-1})$.

\begin{table}[h!]
\caption{{Tensorization of} $f(t) = t^2$, $d=40$. Approximation  in  tensor train format with prescribed $\epsilon$, $\gamma=1$. Confidence intervals for relative $\ell^2$-error $\varepsilon(u^\star)$, relative $\ell^\infty$-error $\varepsilon_\infty(u^\star)$, number of evaluations $M$, storage complexity $S$ and maximal rank for different $\epsilon$.
}\label{tab:square-prescribed-tolerance-tt}
\smallskip \centering
$\begin{array}{|c|c|c|c|c|c|} \hline
\epsilon & \varepsilon(u^\star) & \varepsilon_\infty(u^\star) & M & S   &  \max_{\alpha} r_\alpha \\
\hline
10^{-1}  & [1.9\,10^{-2};1.2\,10^{-1}] & [2.2\,10^{-2};1.8\,10^{-1}] & [158,194] & [80, 96] & [1, 2]   \\\hline
10^{-2}  & [2.4\,10^{-3};7.7\,10^{-3}] & [3.1\,10^{-3};1.8\,10^{-2}] & [230,250] & [114, 122] & [2, 3]  \\\hline
10^{-3}  & [2.6\,10^{-4};3.1\,10^{-3}] & [3.1\,10^{-4};7.2\,10^{-3}] & [274,326] & [134, 160] & [3, 3]  \\\hline
10^{-4}  & [2.7\,10^{-5};1.2\,10^{-4}] & [4.2\,10^{-5};2.5\,10^{-4}] & [370,394] & [182, 194] & [3, 3]  \\\hline
10^{-5}  & [2.1\,10^{-6};8.9\,10^{-6}] & [2.9\,10^{-6};1.1\,10^{-5}] & [446,470] & [220, 232] & [3, 3]  \\\hline
10^{-6}  & [2.5\,10^{-7};7.8\,10^{-7}] & [3.1\,10^{-7};1.4\,10^{-6}] & [514,546] & [254, 270] & [3, 3]  \\\hline
10^{-7}  & [3.0\,10^{-8};2.4\,10^{-7}] & [4.0\,10^{-8};2.6\,10^{-7}] & [586,614] & [290, 304] & [3, 3]  \\\hline
10^{-8}  & [2.1\,10^{-9};4.8\,10^{-9}] & [3.4\,10^{-9};5.6\,10^{-9}] & [678,690] & [336, 342] & [3, 3]  \\\hline
10^{-9}  & [2.3\,10^{-10};4.8\,10^{-10}] & [2.8\,10^{-10};7.5\,10^{-10}] & [746,766] & [370, 380] & [3, 3]  \\\hline
10^{-10}  & [3.1\,10^{-11};7.5\,10^{-11}] & [3.9\,10^{-11};1.0\,10^{-10}] & [810,842] & [402, 418] & [3, 3]    \\\hline
\end{array}
$
\end{table}

\begin{table}[h!]
\caption{{Tensorization of} $f(t) = t^{1/2}$, $d=40$. Approximation in  tensor train format with prescribed $\epsilon$, $\gamma=1$. Confidence intervals for relative $\ell^2$-error $\varepsilon(u^\star)$, relative $\ell^\infty$-error $\varepsilon_\infty(u^\star)$, number of evaluations $M$, storage complexity $S$ and maximal rank for different $\epsilon$.
}\label{tab:sqrt-prescribed-tolerance-tt}
\smallskip \centering
$\begin{array}{|c|c|c|c|c|c|} \hline
\epsilon & \varepsilon(u^\star) & \varepsilon_\infty(u^\star) & M & S   &  \max_{\alpha} r_\alpha \\
\hline
10^{-1}  & [9.3\,10^{-3};5.5\,10^{-2}] & [4.1\,10^{-2};2.7\,10^{-1}] & [182,230] & [90, 114] & [2, 2]  \\\hline
10^{-2}  & [3.7\,10^{-3};8.6\,10^{-3}] & [2.6\,10^{-2};5.1\,10^{-2}] & [314,350] & [156, 172] & [2, 3]   \\\hline
10^{-3}  & [5.4\,10^{-4};9.2\,10^{-4}] & [3.0\,10^{-3};8.5\,10^{-3}] & [514,606] & [252, 300] & [3, 3]   \\\hline
10^{-4}  & [1.3\,10^{-4};3.3\,10^{-3}] & [7.9\,10^{-4};2.4\,10^{-2}] & [838,962] & [414, 474] & [4, 4]   \\\hline
10^{-5}  & [1.8\,10^{-5};8.2\,10^{-4}] & [1.6\,10^{-4};5.4\,10^{-3}] & [1270,1398] & [626, 692] & [4, 5]   \\\hline
10^{-6}  & [1.3\,10^{-6};6.3\,10^{-5}] & [1.2\,10^{-5};4.3\,10^{-4}] & [1900,2036] & [938, 1014] & [5, 5]   \\\hline
10^{-7}  & [4.9\,10^{-7};1.3\,10^{-6}] & [3.5\,10^{-6};1.5\,10^{-5}] & [2444,2718] & [1218, 1344] & [5, 6]   \\\hline
10^{-8}  & [1.0\,10^{-7};1.2\,10^{-6}] & [1.1\,10^{-6};1.5\,10^{-5}] & [3304,3468] & [1642, 1722] & [6, 6]   \\\hline
10^{-9}  & [2.2\,10^{-8};1.3\,10^{-7}] & [1.7\,10^{-7};1.2\,10^{-6}] & [4116,4328] & [2046, 2144] & [7, 7]   \\\hline
10^{-10}  & [8.6\,10^{-10};6.7\,10^{-8}] & [8.8\,10^{-9};4.0\,10^{-7}] & [5024,5136] & [2490, 2552] & [7, 7]   \\\hline

\end{array}
$
\end{table}

\bibliographystyle{plain}

\end{document}